\newcommand{\kk}{\mathbf{k}}
\newcommand{\Aut}{\operatorname{Aut}}
\newcommand{\calP}{\mathcal{P}}
\newcommand{\calQ}{\mathcal{Q}}
\newcommand{\Ker}{\operatorname{Ker}}
\newcommand{\Ima}{\operatorname{Im}}
\newcommand{\Tot}{\operatorname{Tot}}
\newcommand{\Ext}{\operatorname{Ext}}
\newcommand{\Hom}{\operatorname{Hom}}
\newcommand{\id}{\operatorname{id}}
\newcommand{\rmd}{\mathrm{d}}
\newcommand{\llb}{\llbracket}
\newcommand{\rrb}{\rrbracket}
\newcommand{\varphantom}[1]{\mathrel{\phantom{#1}}}
\newtheorem{prop}{Proposition}[section]
\newtheorem{lem}[prop]{Lemma}
\newtheorem{thm}[prop]{Theorem}
\newtheorem{cor}[prop]{Corollary}
\theoremstyle{definition}
\newtheorem{defn}[prop]{Definition}
\theoremstyle{remark}
\newtheorem{remark}[prop]{Remark}
\numberwithin{equation}{section}
\begin{document}

\setlength{\baselineskip}{1.4em}

\title[Homological smoothness of generalized Weyl algebras]{Homological smoothness and deformations\\ of generalized Weyl algebras}
\author{L.-Y.~Liu}
\address{Departement Wiskunde-Informatica, Middelheimcampus, Universiteit Antwerpen, Middelheimlaan 1, 2020 Antwerp, Belgium}
\email{liyu.liu@uantwerpen.be}
\thanks{The author acknowledges the support of the European Union for ERC grant No 257004-HHNcdMir.}
\begin{abstract}
It is an immediate conclusion from Bavula's papers \cite{Bavula:GWA-def}, \cite{Bavula:GWA-tensor-product} that if a generalized Weyl algebra $A=\kk[z;\lambda,\eta,\varphi(z)]$ is homologically smooth, then the polynomial $\varphi(z)$ has no multiple roots. We prove in this paper that the converse is also true. Moreover, formal deformations of $A$ are studied when $\kk$ is of characteristic zero.
\end{abstract}
\subjclass[2010]{16E10, 16E40, 16S80}
\keywords{generalized Weyl algebra, homologically smooth, deformation}
\maketitle

\section{Introduction}
During the development of algebra, an impetus is to introduce and study noncommutative objects with commutative background. Among these noncommutative objects, a class of algebras---generalized Weyl algebras, introduced by Bavula in \cite{Bavula:GWA-def}---have been studied from different points of view. There are many examples of generalized Weyl algebras related to rings of differential operators or quantum groups, such as the usual Weyl algebras, quantum planes, and quantum spheres.

Roughly speaking, let $B$ be an algebra, $\sigma$ an algebra automorphism of $B$ and $a$ a central element in $B$. The triple $(B,\sigma,a)$ determines a generalized Weyl algebra, which is generated by two variables $x$, $y$ over $B$ subject to some relations. Starting with the same $B$, generalized Weyl algebras may have different ring-theoretic and/or homological properties when $\sigma$, $a$ vary. The case $B=\kk[z]$ has received much attention where $\kk$ is a field. Necessarily, $\sigma(z)=\lambda z+\eta$ for some $\lambda\in\kk\setminus\{0\}$ and $\eta\in\kk$, $a$ is a polynomial $\varphi(z)$. The resulting generalized Weyl algebras are denoted by $\kk[z;\lambda,\eta,\varphi(z)]$ in this paper. Here we only mention some references on their homological properties. It is illustrated in \cite{Bavula:GWA-tensor-product} that their global dimensions are equal to $1$, $2$ or $\infty$, and the latter occurs if and only if $\varphi(z)$ admits a multiple root (see also \cite{Bavula:GWA-def}, \cite{Hodge:Kleinian-singularities}, \cite{Solotar:Hochschild-homology-GWA-quantum}). Their Hochschild homology and cohomology are computed in \cite{Farinati:Hochschid-homology-GWA}, \cite{Solotar:Hochschild-homology-GWA-quantum}. In particular, a remarkable result in \cite{Farinati:Hochschid-homology-GWA} is that to assure a duality between its Hochschild homology and cohomology, the algebra $\kk[z;1,\eta,\varphi(z)]$ should have finite projective dimension as a bimodule over itself (also called Hochschild cohomological dimension). This dimension is related to Van den Bergh duality.

An algebra $A$ is said to be homologically smooth if $A$ has a finitely generated projective resolution of finite length as an $A^e$-module. Van den Bergh proved in \cite{Van-den-Bergh:VdB-duality} that over a homologically smooth algebra $A$, if there is an invertible bimodule $U$ and $d\geq 0$ such that $\mathrm{RHom}_{A^e}(A,A^e)\cong U[-d]$ in the derived category $\mathbf{D}^b(A^e)$, then the duality $H^i(A,M)\cong H_{d-i}(A,U\otimes_AM)$ holds for any $A$-bimodule $M$. Plenty of algebras enjoy Van den Bergh duality, such as noetherian Artin-Schelter regular connected graded algebras, noetherian Artin-Schelter regular Hopf algebras, and some filtered algebras. Clearly, the Hochschild cohomological dimension of an algebra is not less than its global dimension. So the condition that $\varphi(z)$ has no multiple roots is necessary if $A=\kk[z;\lambda,\eta,\varphi(z)]$ is homologically smooth. It is natural to ask whether it is also a sufficient condition. It turns out in \cite{Farinati:Hochschid-homology-GWA} and \cite{Solotar:Hochschild-homology-GWA-quantum} that under the condition, $\sup\{n\mid H^n(A,A)\neq 0\}$ is finite. However, the fact is not sufficient to conclude that $A$ is homologically smooth.

Some examples of generalized Weyl algebras, say Weyl algebra $A_1(\kk)$, quantum $2$-plane $\kk_q[x,y]$ and the localization $\kk_q[x,y^{\pm 1}]$, are all homologically smooth. In \cite{Krahmer:qh-space}, Kr\"ahmer proved the standard quantum $2$-sphere is Artin-Schelter Gorenstein and homologically smooth. He also asked whether the non-standard ones are also homologically smooth. The above known results are obtained according to the individual algebraic structures of these algebras. Their Hochschild cohomological dimensions are all equal to $2$. So together with the results of \cite{Bavula:GWA-tensor-product}, \cite{Farinati:Hochschid-homology-GWA}, \cite{Hodge:Kleinian-singularities}, \cite{Solotar:Hochschild-homology-GWA-quantum}, we conjecture that the dimension equals $2$ provided that it is finite. Thus the problem boils down to the cohomology group $H^3(A,M)$ for an arbitrary $A$-bimodule $M$. In this paper, we make use of the periodic projective resolution constructed in Sect.\ \ref{sec:projective-resolution}, succeed in proving the sufficiency. Our tool is called homotopy double complex, which seems feasible to have other applications. Moreover, homologically smooth generalized Weyl algebras are proved to be twisted Calabi-Yau (Proposition \ref{prop:dualizing-complex} and Theorem \ref{thm:twisted-CY}).

\begin{thm}
Let $A=\kk[z;\lambda,\eta,\varphi(z)]$. The Hochschild cohomological dimension of $A$ is $2$ if $\varphi(z)$ has no multiple roots, and hence $A$ is homologically smooth. Moreover, $A$ is $\nu$-twisted Calabi-Yau where $\nu(x)=\lambda x$, $\nu(y)=\lambda^{-1}y$ and $\nu(z)=z$.
\end{thm}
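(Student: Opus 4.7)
My plan is to leverage the periodic projective resolution $P_\bullet \to A$ of $A$ as an $A^e$-module built in Section~\ref{sec:projective-resolution}. When $\varphi(z)$ has a multiple root this resolution has an essential periodic tail, accounting for infinite global dimension. When $\varphi(z)$ has no multiple roots, $\varphi$ and $\varphi'$ are coprime in $\kk[z]$, so there exist $u(z), v(z)\in\kk[z]$ with $u\varphi+v\varphi'=1$; this B\'ezout identity should furnish the explicit null-homotopy needed to assemble $P_\bullet$ into a homotopy double complex. Passing to the total complex should collapse the periodic tail and leave a finite-length projective resolution of $A$.

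With such a resolution in hand, the first task is to verify that $H^n(A,M)=0$ for all $n\geq 3$ and every $A$-bimodule $M$. This should amount to a diagram chase on the homotopy double complex, turning a high-degree cocycle into a coboundary by means of the explicit contracting homotopy built from $u$ and $v$. Since the Hochschild cohomological dimension of $A$ is no less than its global dimension, which equals $2$ in the present case by \cite{Bavula:GWA-tensor-product}, the Hochschild cohomological dimension is then exactly $2$. Homological smoothness is automatic at this point: the total complex described above is finitely generated projective in each of its finitely many degrees, hence is the required finite bimodule resolution.

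For the twisted Calabi-Yau statement, I would apply $\Hom_{A^e}(-,A^e)$ to the length-$2$ resolution and read off $\mathrm{RHom}_{A^e}(A,A^e)$. The preceding vanishing forces $\Ext^i_{A^e}(A,A^e)=0$ for $i\neq 2$, so it suffices to identify $\Ext^2_{A^e}(A,A^e)$ as a bimodule. A direct computation on the top degree should show that the right action is twisted by the automorphism $\nu$ sending $x\mapsto\lambda x$, $y\mapsto\lambda^{-1}y$, $z\mapsto z$; the factor $\lambda$ enters through the scaling part of $\sigma(z)=\lambda z+\eta$ which governs how $x$ and $y$ commute with $z$. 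This is exactly what Proposition~\ref{prop:dualizing-complex} and Theorem~\ref{thm:twisted-CY} are set up to assemble into the $\nu$-twisted Calabi-Yau conclusion.

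The main obstacle is the construction of the homotopy double complex together with its contracting homotopy. The subtlety is that the natural candidate for a homotopy on individual pieces of $P_\bullet$ is \emph{not} $A^e$-linear; it only becomes a legitimate tool after one passes to the total complex of a suitably chosen double complex, which is precisely the motivation for the \emph{homotopy double complex} formalism advertised in the introduction. Once this machinery is in place, all three assertions of the theorem --- the value of the Hochschild cohomological dimension, homological smoothness, and the identification of the Nakayama automorphism $\nu$ --- follow by formal manipulations.
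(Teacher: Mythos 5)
Your outline for the first half of the theorem tracks the paper's strategy, but the phrase ``collapse the periodic tail and leave a finite-length projective resolution'' misdescribes what actually happens. The total complex $\Tot\calP_{\cdot\cdot}$ built from the homotopy double complex is \emph{still} a $2$-periodic, infinite-length resolution by finitely generated projectives. The finiteness is obtained indirectly: the B\'ezout identity $\alpha\varphi+\beta\varphi'=1$ is fed into an explicit diagram chase on $\Hom_{A^e}(\Tot\calP_{\cdot\cdot},M)$ to show $H^3(A,M)=0$ for \emph{every} bimodule $M$ (Lemma \ref{lem:no-multi-root}). By the standard projective-dimension criterion this forces $\operatorname{pd}_{A^e}A\le 2$, and only then can one truncate the periodic resolution after the second syzygy to extract a finite, finitely generated projective resolution. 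So your plan is compatible with the paper's, but the logical order (cohomology vanishing first, truncation second) should be made explicit; no contracting homotopy on the total complex itself is constructed.

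The twisted Calabi-Yau half contains a genuine gap. You assert that ``the preceding vanishing forces $\Ext^i_{A^e}(A,A^e)=0$ for $i\neq 2$,'' but $H^{\ge 3}(A,M)=0$ only gives the vanishing in degrees $i>2$; the cases $i=0$ and $i=1$ do not follow from a projective-dimension bound and require a separate argument. The paper avoids both this issue and the direct computation of $\Ext^2_{A^e}(A,A^e)$ on the resolution by a different route: it realizes $A$ as $B_3/\omega B_3$ for a Smith-type iterated Ore extension $B_3$ and the central regular element $\omega=yx-\varphi(z)$, invokes the twisted Calabi-Yau property of Ore extensions (Lemma \ref{lem:B_3-twisted-CY}) to get $\Ext^\cdot_{B_3^e}(B_3,B_3^e)$ concentrated in degree $3$, and then uses the long exact sequence together with a double application of the Rees Lemma to descend to $\Ext^\cdot_{A^e}(A,A^e)$, obtaining the full identification $\Ext^i_{A^e}(A,A^e)\cong A^\nu$ in degree $2$ and $0$ elsewhere in one stroke (Proposition \ref{prop:dualizing-complex}). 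A welcome by-product of this route, which your direct-computation plan would not yield, is that Proposition \ref{prop:dualizing-complex} holds with no hypothesis on the roots of $\varphi(z)$; the multiple-root condition enters only through homological smoothness. If you want to carry out your plan instead, you must separately prove $\Ext^0_{A^e}(A,A^e)=\Ext^1_{A^e}(A,A^e)=0$ and actually perform the degree-$2$ computation, neither of which is sketched.

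Finally, the remark that ``the natural candidate for a homotopy on individual pieces of $P_\bullet$ is not $A^e$-linear'' does not match the paper: the homotopy $r$ in the homotopy double complex \emph{is} $A^e$-linear and is part of the data of $(\calP_{\cdot\cdot},d^v,d^h,r)$; it is used to assemble the resolution, not to repair a failure of linearity. The B\'ezout coefficients $\alpha,\beta$ enter at the level of $\Hom_{A^e}(\Tot\calP_{\cdot\cdot},M)$ in the proof of Lemma \ref{lem:no-multi-root}, which is a different layer of the argument.
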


In particular, quantum $2$-spheres are all homologically smooth. This gives a positive answer to \cite[Question 2]{Krahmer:qh-space}.

This paper is also dedicated to the deformations of $\kk[z;\lambda,\eta,\varphi(z)]$. Deformation theory is a system studying how an object in a certain category of spaces can be varied in dependence on the points of a parameter space. It deals with the structure of families of objects like varieties, singularities, vector bundles, presheaves, algebras or differentiable maps. Deformation problems appear in various areas of mathematics, in particular in algebra, algebraic and analytic geometry, and mathematical physics.

We care about formal deformations of associative algebras. Historically, the theme has its root in the work of Gerstenhaber \cite{Gerstenhaber:deformation}. There are closed connections between deformation theory and Hochschild cohomology. For example, the second Hochschild cohomology group may be interpreted as the set of (equivalence classes of) first order deformations; the obstruction theory is related to the third Hochschild cohomology group. In this paper, we construct formal deformations of noncommutative generalized Weyl algebras $A=\kk[z;\lambda,\eta,\varphi(z)]$ when $\kk$ is of characteristic zero. Generally speaking, a formal deformation can only be constructed under the severe condition that all obstructions are passed successfully. This condition trivially holds for homologically smooth generalized Weyl algebras (see \S\ref{subsec:deformation-survey} and Theorem \ref{thm:twisted-CY}). According to our computations, a surprising result is that even if $A$ is not homologically smooth, we can construct a formal deformation of $A$ starting with a special Hochschild $2$-cocycle. Concretely, we use a periodic complex to compute the Hochschild cohomology of $A$ with coefficients in any $A$-bimodule $M$ whose cocycles are called Per cocycles, and define a map $f\colon M\to\{\text{Per $2$-cocycles}\}$. When $M=A$, by a pair of quasi-isomorphisms between the Hochschild cochain complex and the periodic complex, we prove (see Theorems \ref{thm:formal-deformation-quantum} and \ref{thm:formal-deformation-classical})

\begin{thm}
  Let $A=\kk[z;\lambda,\eta,\varphi(z)]$ be a noncommutative generalized Weyl algebra. Let $F_1$ be the Hochschild $2$-cocycle corresponding to $f(z)$ if $\eta=0$, or to $f(1)$ if $\lambda=1$. There exist a family of $\kk$-bilinear maps $F_n\colon A\times A\to A$, $n\geq 2$ integrating $F_1$ that determine a formal deformation of $A$. The family $\{F_n\}$ is unique if each $F_n$ satisfies the conditions (a), (b) in Lemma \ref{lem:determine-F_n}.
\end{thm}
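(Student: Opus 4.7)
The plan is to construct the family $\{F_n\}_{n\geq 2}$ inductively by working with the periodic complex $\calP^\bullet$ rather than directly with the Hochschild cochain complex $C^\bullet(A,A)$. The formal deformation equations amount to
\[
\delta F_n \;=\; \sum_{\substack{i+j=n\\ i,j\geq 1}} F_i\circ F_j, \qquad n\geq 2,
\]
where $F_0$ is the undeformed multiplication and $\delta$ is the Hochschild coboundary. Writing $\Omega_n$ for the right-hand side, one must show at each step that $\Omega_n$ is a Hochschild $3$-coboundary and make a canonical choice of primitive $F_n$. Using the pair of quasi-isomorphisms between $\calP^\bullet$ and $C^\bullet(A,A)$ alluded to in the introduction, this translates to finding, for each $n$, a Per $2$-cochain $\widetilde F_n$ whose Per-differential equals the Per image of $\Omega_n$; the resulting $\widetilde F_n$ is then pulled back to give $F_n$.

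I would proceed by strong induction on $n$. Assuming $\widetilde F_1,\ldots,\widetilde F_{n-1}$ already satisfy conditions (a), (b) of Lemma~\ref{lem:determine-F_n}, a standard computation with the Gerstenhaber bracket shows that $\Omega_n$ is a Hochschild $3$-cocycle, hence so is its Per image $\widetilde\Omega_n$. The real work is to exhibit an explicit primitive for $\widetilde\Omega_n$ in $\calP^\bullet$. Here the particular form of $F_1$ is essential: when $\eta=0$ one has $\sigma(z)=\lambda z$ and $\widetilde F_1$ comes from $f(z)$, while when $\lambda=1$ one has $\sigma(z)=z+\eta$ and $\widetilde F_1$ comes from $f(1)$. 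In either case the explicit smallness of $\calP^\bullet$, together with the inductive normalization enforced by (a), (b), should allow one to write down $\widetilde F_n$ directly in terms of $x$, $y$, $z$, $\varphi(z)$, and iterates $\sigma^i(\varphi(z))$. Conditions (a), (b) pin the primitive down uniquely, which yields the uniqueness assertion as well: any two solutions satisfying them differ by a Per $2$-cocycle killed by the same constraints, hence coincide.

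The main obstacle I anticipate is precisely the verification that $\widetilde\Omega_n$ always lies in the image of the Per differential. In the homologically smooth setting this would follow for free from $H^3(A,A)=0$ (Theorem~\ref{thm:twisted-CY}), but the theorem makes no smoothness assumption, so the cohomological vanishing of $[\Omega_n]$ must be extracted by a hands-on calculation that exploits how $\widetilde F_1$ interacts under the Gerstenhaber composition with each previously constructed $\widetilde F_k$. The two cases $\eta=0$ and $\lambda=1$ have to be treated separately, since the formulas for $\sigma$ differ, but in each case the inductive normalization from (a), (b) should keep the obstructions inside a controlled subspace on which $\delta$ is known to be surjective. Once this surjectivity is secured, transporting $\widetilde F_n$ back across the quasi-isomorphism produces the desired $F_n$ and closes the induction.
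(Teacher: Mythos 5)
Your plan correctly identifies the overall architecture of the paper's argument: work on the periodic complex $\Tot\calQ^{\cdot\cdot}$ via the quasi-isomorphisms $\theta_\cdot$, $\theta'_\cdot$, induct on $n$, and use Lemma~\ref{lem:determine-F_n} to pin each $F_n$ down by a handful of data. However, as you yourself acknowledge, you have not carried out the step on which the whole theorem hinges. The theorem deliberately makes no smoothness hypothesis, so $H^3(A,A)$ need not vanish and the class $[\Omega_n]$ is not automatically zero. The paper does \emph{not} appeal to surjectivity of the differential on a ``controlled subspace''; it proves $[\Omega_n]=0$ by brute force. Concretely, one strengthens the inductive hypothesis to include the auxiliary identities \eqref{eq:eq1-quantum}--\eqref{eq:eq4-quantum} (the values $F_n(x,yz)$, $F_n(y,xz)$, $F_n(x,h(z))$, $F_n(y,h(z))$), computes the Per $3$-cocycle $(S_1,S_2,S_3,S_4)$ corresponding to $\sum_{i=1}^{n-1}F_i\bullet F_{n-i}$ through a binomial-coefficient manipulation (the terms $S_4'$, $S_4''$), finds it equals an explicit element of $A^{\oplus 4}$ involving $\bar\varphi^{(n)}(z)$, and exhibits a concrete preimage $(0,-yz,0,\frac{(-1)^n}{n!}z^n\bar\varphi^{(n)}(z))$. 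Without that computation you have a plan, not a proof.

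Your uniqueness argument is also flawed as stated. You claim any two integrating families obeying (a), (b) ``differ by a Per $2$-cocycle killed by the same constraints, hence coincide.'' But $F_1$ itself corresponds to the Per $2$-cocycle $f(z)$ (resp.~$f(1)$), satisfies (a) and (b), and has $\mathsf{b}F_1=0$; so conditions (a), (b) together with the obstruction equation do \emph{not} by themselves force the four values $F_n(x,z)$, $F_n(x,y)$, $F_n(y,z)$, $F_n(y,x)$. The uniqueness in Theorems~\ref{thm:formal-deformation-quantum} and~\ref{thm:formal-deformation-classical} is proved only after prescribing those four values explicitly; Lemma~\ref{lem:determine-F_n} then determines $F_n$ from $\mathsf{b}F_n$ and those four values. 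Your argument skips the step of showing that the preimage of the obstruction, once conditions (a), (b) are imposed, singles out those values.
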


We also proved that in most cases, the Hochschild $2$-cocycle $F_1$ in the theorem is not a coboundary, and so our construction is not equivalent to the trivial one.

An interesting subject in noncommutative geometry and mathematical physics is to study how to treat a noncommutative object as a deformation of a commutative one. A class of generalized Weyl algebras ($\lambda=\eta=1$) were studied by T.J.~Hodges as noncommutative deformations of type-$A$ Kleinian singularities \cite{Hodge:Kleinian-singularities}. Motivated by Van den Bergh \cite{Van-den-Bergh:Koszul-bimodule-complex}, we give another point of view to obtain noncommutative generalized Weyl algebras by deforming commutative algebras. The main difference between Hodges's deformation and ours is that the Kleinian singularities, via Hodges's deformation, may become smooth; our deformation preserves the (non)smoothness.

We also show that the map $f\colon M\to\{\text{Per $2$-cocycles}\}$ induces an isomorphism $H_0(A,M^\nu)\cong H^2(A,M)$ if $A$ is homologically smooth, whose inverse is given explicitly. This isomorphism is a Van den Bergh duality.

This paper is organized as follows. In Sect.\ \ref{sec:preliminary}, besides reviewing the definitions of generalized Weyl algebra and formal deformation, we introduce homotopy double complexes as well as the associated total complexes. In Sect.\ \ref{sec:projective-resolution}, we construct a homotopy double complex for the generalized Weyl algebra $A=\kk[z;\lambda,\eta,\varphi(z)]$ and prove that the associated total complex is a periodic projective resolution. In Sect.\ \ref{sec:homological-smoothness}, using the periodic projective resolution, we prove that $A$ is homologically smooth if $\varphi(z)$ has no multiple roots, and furthermore it is twisted Calabi-Yau. In Sect.\ \ref{sec:noncommutative-deformation}, deformations of generalized Weyl algebras are studied. We construct formal deformations of noncommutative generalized Weyl algebras, and illustrate how to realize them by deforming commutative algebras, under a technical assumption. We also give an explicit Van den Bergh duality $H_0(A,M^\nu)\cong H^2(A,M)$ and explain that our construction is non-trivial in most cases.

\section{Preliminaries}\label{sec:preliminary}

Throughout, $\kk$ is a field, $\kk^\times=\kk\setminus\{0\}$, and all vector spaces and algebras are over $\kk$ unless stated otherwise. Unadorned $\otimes$ means $\otimes_{\kk}$. Let $A$ be an algebra and $M$ an $A$-bimodule. The group of algebra automorphisms of $A$ is denoted by $\Aut(A)$. For any $f$, $g\in\Aut(A)$, denote by ${}^f\!M^g$ the $A$-bimodule whose ground vector space is the same with $M$ and whose left and right $A$-actions are twisted by $f$ and $g$ respectively, that is, $a_1\cdot m\cdot a_2=f(a_1)mg(a_2)$ for any $a_1$, $a_2\in A$, $m\in M$. If one of $f$ and $g$ is the identity map, it is usually omitted.

Let $A^{\mathrm{op}}$ be the opposite algebra of $A$ and $A^e=A\otimes A^{\mathrm{op}}$ the enveloping algebra of $A$. An $A$-bimodule $M$ can be viewed as a left $A^e$-module in a natural way, that is, $(a_1\otimes a_2)\cdot m=a_1ma_2$ for any $a_1$, $a_2\in A$ and $m\in M$.

\subsection{Generalized Weyl algebras}\label{subsec:GWA-definition}

In this subsection, we recall the definition of generalized Weyl algebras given by Bavula in \cite{Bavula:GWA-def}.

\begin{defn}
Suppose $B$ is an algebra. For a central element $a\in B$ and an algebra automorphism $\sigma\in\Aut(B)$, the \textit{generalized Weyl algebra} (GWA for short) $A=B(\sigma,a)$ is by definition generated by $2$ variables $x$ and $y$ over $B$ subject to
\begin{gather*}
xb=\sigma(b)x,\;yb=\sigma^{-1}(b)y,\;\forall\, b\in B,\\
yx=a,\;xy=\sigma(a).
\end{gather*}
\end{defn}

Denote
\[
x_i=\begin{cases}
x^i, & \text{if $i\ge 0$},\\
y^{-i}, & \text{if $i< 0$},
\end{cases}
\]
then $A=\bigoplus_{i\in\mathbb{Z}}Bx_i$, and $Bx_i=x_iB$.

There are various algebras belong to the class of GWAs, such as the usual Weyl algebra $A_1(\kk)$, the enveloping algebra $U(\mathfrak{sl}(2,\kk))$ as well as its primitive factors $U(\mathfrak{sl}(2,\kk))/(C-\lambda)$ where $C$ is the Casimir element and $\lambda\in\kk$, the quantum $2$-spheres, and so on (see \cite{Bavula:GWA-tensor-product}).

Many properties of GWAs have been studied. But the literature on their homological smoothness is quite limited. Recall that an algebra is said to be \emph{homologically smooth} if as a bimodule over itself, it admits a finitely generated projective resolution of finite length. One aim of this paper is to study the homological smoothness and twisted Calabi-Yau property of a class of GWAs.

\begin{defn}[\cite{Bocklandt:superpotential}]
Suppose that $A$ is an algebra and $\nu\in\Aut(A)$. $A$ is called $\nu$-\textit{twisted Calabi-Yau} of dimension $d$ for some $d\in\mathbb{N}$ if $A$ is homologically smooth, and
\[
\Ext_{A^e}^i(A,A^e)\cong
\begin{cases}
0, & \text{if $i\neq d$},\\
A^{\nu}, & \text{if $i=d$}
\end{cases}
\]
as $A^e$-modules, where the left $A^e$-module structure of $A^e$ is used to compute the homology and the right one is retained, inducing the $A^e$-module structures on the homology groups.
\end{defn}

\begin{remark}
In the definition, the integer $d$ is equal to the Hochschild cohomological dimension of $A$. The automorphism $\nu$ is unique up to inner isomorphism and is thus called the \textit{Nakayama automorphism} of $A$ (see \cite{Brown-Zhang:noetherian-hopf-algebra}).
\end{remark}

\subsection{Spectral sequence of a homotopy double complex}
Let us introduce the notion of homotopy double complexes and the associated total complexes.

\begin{defn}
Suppose that $\mathcal{A}$ is an abelian category. Let $\{C^{pq}\}_{p,q\in\mathbb{Z}}$ be a family of objects in $\mathcal{A}$ together with morphisms $d_v$, $d_h$, $s$ of degrees $(0,1)$, $(1,0)$, $(2,-1)$ respectively.
The $4$-tuple $(C^{\cdot\cdot},d_v,d_h,s)$ is called a \textit{homotopy double cochain complex} if
\begin{equation}\label{eq:homotopy-double-complex}
d_v^2=0,\; d_hd_v+d_vd_h=0,\; d_h^2+d_vs+sd_v=0,\; d_hs+sd_h=0,\; s^2=0.
\end{equation}
The \textit{associated total complex} $(\Tot C^{\cdot\cdot},d)$ is defined by $(\Tot C^{\cdot\cdot})^n=\bigoplus_{p+q=n}C^{pq}$ and $d=d_v+d_h+s$.
\end{defn}

Homotopy double chain complexes and the associated total complexes can be defined similarly. We put in two pictures for the reader to visualize the definitions.
\[
\text{cochain}:\quad
\xymatrix@C=5mm@R=5mm{
C^{02} & C^{12} & C^{22} \\
C^{01} \ar[u]^-{d_v}\ar[r]^-{d_h}\ar[rrd]^-{s} & C^{11} & C^{21} \\
C^{00} & C^{10} & C^{20}
}
\qquad\qquad\text{chain}:\quad
\xymatrix@C=5mm@R=5mm{
C_{02} & C_{12} & C_{22} \\
C_{01} & C_{11} & C_{21} \ar[d]^-{d^v}\ar[l]^-{d^h}\ar[llu]^-{s} \\
C_{00} & C_{10} & C_{20}
}
\]

It is easy to see that by letting $s=0$, a homotopy double complex as well as the associated total complex is exactly the usual double complex as well as the usual total complex.

\begin{thm}\label{thm:proj.resolution}
  Suppose that there exist enough projective objects in $\mathcal{A}$. Let $C_{\cdot}$ be a chain complex, and $(\calP_{\cdot\cdot},d^v,d^h,r)$ be a homotopy double complex on the upper-half plane. Suppose that for each $p$, $\calP_{p,\cdot}$ is a projective resolution of $C_{p}$ and that the differentials of $C_\cdot$ are induced by $d^h$. Then $\Tot \calP_{\cdot\cdot}$ is quasi-isomorphic to $C_{\cdot}$ if one of the following conditions holds:
  \begin{enumerate}
  \item there is an integer $N$ such that $\calP_{pq}=0$ for all $q>N$,
  \item there is an integer $N$ such that $\calP_{pq}=0$ for all $p<N$.
  \end{enumerate}
\end{thm}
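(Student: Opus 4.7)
The plan is to imitate the standard double-complex argument via a column spectral sequence, and to dispose of the extra homotopy term $r$ of bidegree $(-2,1)$ by a degree count showing that it cannot produce any nontrivial differential on the pages that matter.

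First I would construct an explicit chain map $\pi\colon\Tot\calP_{\cdot\cdot}\to C_\cdot$. On $\calP_{p,0}$ take $\pi$ to be the augmentation $\varepsilon\colon\calP_{p,0}\to C_p$ of the given resolution, and on $\calP_{p,q}$ with $q>0$ set $\pi=0$. Because $\varepsilon\circ d^v=0$, because $r$ strictly raises the row index (so nothing in its image can land in row $0$), and because $d^h$ induces the differential of $C_\cdot$ in row $0$ by hypothesis, a bidegree-by-bidegree check gives $\pi\circ d=d_C\circ\pi$. Next I would filter $\Tot\calP_{\cdot\cdot}$ by columns, $F_p=\bigoplus_{i\le p}\calP_{i,\cdot}$; since each of $d^v$, $d^h$, $r$ preserves or lowers the column index, $F_p$ is a subcomplex. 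In each total degree $n$ the filtration is bounded: hypothesis (2) forces $F_p^n=0$ for $p<N$, while hypothesis (1) pins the nonzero pieces to the window $n-N\le p\le n$. Hence the associated spectral sequence converges strongly to $H_\ast(\Tot\calP_{\cdot\cdot})$.

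Running the pages, $E^0$ carries the column-preserving part $d^v$, so by the resolution hypothesis $E^1_{p,q}=H_q(\calP_{p,\cdot},d^v)$ equals $C_p$ for $q=0$ and vanishes otherwise. The $E^1$-differential is induced by the column-shift-by-one part $d^h$, which by hypothesis is the differential of $C_\cdot$; therefore $E^2_{p,0}=H_p(C_\cdot)$ and $E^2_{p,q}=0$ otherwise. The term $r$ first enters on $E^2$ with bidegree $(-2,1)$, but its target $E^2_{p-2,1}$ already vanishes, and the same bidegree obstruction rules out every later differential; thus $E^\infty=E^2$. Endowing $C_\cdot$ with the trivial column filtration $F_pC=\bigoplus_{i\le p}C_i$, the map $\pi$ is filtration-preserving and on $E^1$ reduces to $\varepsilon$, hence on $E^2$ to the identity of $H_p(C_\cdot)$; the standard comparison theorem for convergent spectral sequences then upgrades $\pi$ to a quasi-isomorphism.

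I expect the only real subtlety to be the bookkeeping of the $r$-term together with the verification of strong convergence; the homotopy relations $(d^h)^2+d^vr+rd^v=0$, $d^hr+rd^h=0$ and $r^2=0$ are precisely what is needed both to make $d=d^v+d^h+r$ a genuine differential on $\Tot\calP_{\cdot\cdot}$ and to make its spectral sequence pages behave formally like those of an ordinary double complex past the $E^2$-page.
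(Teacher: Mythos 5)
Your proposal is correct and follows essentially the same route as the paper: filter $\Tot\calP_{\cdot\cdot}$ by columns, observe that $r$ lowers the column index so the filtration is respected, identify $E^1$ via the vertical resolutions and $E^2$ via the horizontal differential (with $r$ entering only past the page where everything outside row $q=0$ already vanishes), and use boundedness of the filtration in each total degree to get convergence. You go one small step further than the paper by constructing the explicit augmentation chain map $\pi$ and invoking the spectral-sequence comparison theorem, which is a welcome tightening since the paper concludes the quasi-isomorphism from the $E^2$-collapse without exhibiting a map.
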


\begin{proof}
The filtration by columns
\[
(F_n\calP_{\cdot\cdot})_{pq}=
\begin{cases}
\calP_{pq}, & \text{if $p\le n$},\\
0, & \text{if $p>n$}
\end{cases}
\]
makes $\Tot \calP_{\cdot\cdot}$ into a filtered complex, and thus gives rise to a spectral sequence $E^r_{pq}$, starting with $E^0_{pq}=\calP_{pq}$. The differentials $d^0$ are just $d^v$, so $E^1_{pq}=H^v_q(\calP_{p,\cdot})$. The differentials $d^1$ are induced by $d^h$ since $(d^h)^2$ is null homotopy, so we have $E^2_{pq}=H^h_pH^v_q(\calP_{\cdot\cdot})$.

Since $\calP_{p,\cdot}$ is a projective resolution of $C_{p}$, we have
\[
E^2_{pq}=
\begin{cases}
  H_p(\calP_\cdot), &\text{if $q=0$},\\
  0, &\text{if $q\neq0$}.
\end{cases}
\]
In both cases, the filtration is bounded, thus we have a convergent spectral sequence $E_2^{pq}\Rightarrow H^{p+q}(\Tot \calP_{\cdot\cdot})$. Therefore, $\Tot \calP_{\cdot\cdot}$ is quasi-isomorphic to $C_{\cdot}$.
\end{proof}

\begin{remark}
Unlike the usual double complexes, one fails to endow the total complex of $\calP_{\cdot\cdot}$ with the filtration by rows because the differentials do not respect the filtration.
\end{remark}

\subsection{Deformations of an associative algebra}\label{subsec:deformation-survey}
The first part of this subsection is devoted to a review of formal deformations. The reader is referred to the survey \cite{Gerstenhaber-Schack:deformation} for details. In the second part, we introduce locally finite deformations.

Let $A$ be an algebra, and $(C^\cdot(A,A), \mathsf{b})$ the Hochschild cochain complex of $A$. Denote by $\kk[[t]]$ the ring of formal power series in an indeterminate $t$, and by $A[[t]]$ the $\kk[[t]]$-module of formal power series $\sum_{n=0}^{\infty}a_nt^n$ with coefficients in $A$. Given a family of $\kk$-bilinear maps $F_n\colon A\times A\to A$, $n\geq 1$, one obtains a $\kk$-bilinear map $*\colon A\times A\to A[[t]]$ defined by
\begin{equation}\label{eq:formal-deformation}
u*v=uv+F_1(u,v)t+F_2(u,v)t^2+\cdots.
\end{equation}
A \textit{formal deformation} of $A$ is such a $*$ that the extended $\kk[[t]]$-bilinear map $A[[t]]\times A[[t]]\to A[[t]]$ determines an associative multiplication on $A[[t]]$. In this case, the maps $F_n$ satisfy
\begin{equation}\label{eq:obstruction}
\sum_{i=1}^{n-1}F_i\bullet F_{n-i}=\mathsf{b}F_n
\end{equation}
where $F_i\bullet F_{n-i}\in C^3(A,A)$ is defined by
\[
F_i\bullet F_{n-i}(a_1,a_2,a_3)=F_i(F_{n-i}(a_1,a_2),a_3)-F_i(a_1,F_{n-i}(a_2,a_3)).
\]

Two formal deformations $*$ and $*'$ are said to be \textit{equivalent} if there is a $\kk[[t]]$-algebra isomorphism $G \colon (A[[t]],*)\to(A[[t]],*')$ such that
\[
G(u)\equiv u \;\;\bmod {tA[[t]]}
\]
for all $u\in A$. We use the symbol $\cong_\mathrm{f}$ to express the kind of isomorphisms.

In the definition of formal deformation, by replacing $\kk[[t]]$, $A[[t]]$ by $\kk[t]/(t^{n+1})$, $A\otimes\kk[t]/(t^{n+1})$ respectively for $n\in \mathbb{Z}^+$, the \textit{$n$th order deformation} and equivalence relation can be defined similarly.

If we view each $F_n$ as an element in the Hochschild cochain module $C^2(A,A)$, then $F_1$ must be a $2$-cocycle by the associative law. Moreover, let $F_1'$ be another $2$-cocycle.
Then $F_1$, $F_1'$ represent the same cohomology class in $H^2(A,A)$ if and only if $F_1'$ appears in a formal deformation $*'$ equivalent to $*$. In fact, there is a bijection between $H^2(A,A)$ and the family of equivalence classes of first order deformations. A natural question is: Is any $2$-cocycle able to lift to a formal deformation? The answer is no in general.

Let $F_0$ be the multiplication map of $A$. Starting with a certain $F_1\in Z^2(A,A)$, $F_0+F_1t$ defines an associative multiplication on $A\otimes\kk[t]/(t^2)$. One can show $F_1\bullet F_1\in Z^3(A,A)$, and by \eqref{eq:obstruction}, $F_1\bullet F_1=\mathsf{b}F_2$ for some $F_2$ if and only if $F_0+F_1t+F_2t^2$ defines an associative multiplication on $A\otimes\kk[t]/(t^3)$. The cohomology class $[F_1\bullet F_1]\in H^3(A,A)$ is vividly called the \textit{primary obstruction} to integrating $F_1$. Generally, if there is an $(n-1)$st order deformation $F_0+F_1t+\cdots+F_{n-1}t^{n-1}$, then the left-hand side of \eqref{eq:obstruction} is always a Hochschild $3$-cocycle whose cohomology class is called an obstruction, and it is a coboundary if and only if there exists an $n$th order deformation $F_0+F_1t+\cdots+F_{n-1}t^{n-1}+F_nt^n$. If all obstructions can be passed successfully, i.e., $[F_1\bullet F_{n}+F_2\bullet F_{n-1}+\cdots+F_{n}\bullet F_1]=0$ in $H^3(A,A)$ for all $n\geq 1$, we say that $F_1$ is \textit{integrable} and $F_2, F_3, \ldots$ integrate $F_1$.

It is a difficult problem to decide when a $2$-cocycle is integrable, unless luckily, one has $H^3(A,A)=0$. We will show that the homologically smooth GWAs studied in Sect.\ \ref{sec:homological-smoothness} are such algebras.

Let $*$ be a formal deformation of $A$ and $\tilde{A}=A\otimes \kk[[t]]$. Since $\kk$ is a field, $\tilde{A}$ is a $\kk[[t]]$-submodule of $A[[t]]$. It is easy to verify that $\tilde{A}$ is a $\kk[[t]]$-subalgebra of $(A[[t]], *)$ if and only if the right-hand side of \eqref{eq:formal-deformation} belongs to $\tilde{A}$ for all $u$, $v\in A$. In this case, we say $*$ to be \textit{locally finite}, i.e., the vector space $\sum_{n\geq 1}\kk F_n(u,v)$ is finite dimensional for every pair $(u,v)\in A\times A$. Moreover, the $t$-adic completion of $(\tilde{A},*|_{\tilde{A}})$ is isomorphic to $(A[[t]],*)$.  Let $\tilde{A}_t$ be the localization of $\tilde{A}$ at $t$, which is a $\kk(\!(t)\!)$-algebra. We call $\tilde{A}_t$ a \emph{locally finite deformation} of $A$.

The trivial formal deformation $*_{\mathrm{tr}}$ is given by $F_n=0$ for all $n\geq 1$, i.e., $(A[[t]],*_{\mathrm{tr}})$ is the algebra of the formal power series with coefficients in $A$, and $\tilde{A}_t\cong \kk(\!(t)\!)\otimes A$ is an extension of base field.

\section{Projective resolutions}\label{sec:projective-resolution}

\subsection{A periodic complex}
For any GWA $A=B(\sigma, a)$, we construct a periodic complex $C_{\cdot}$ of $A^e$-modules as the cornerstone of the conclusions in this paper.

\begin{prop}\label{prop:long-sequence}
Suppose that $A=B(\sigma, a)$ is a GWA. Let $C_{\cdot}\in\mathrm{Ch}_{\geq 0}(A^e)$ be the chain complex of $A^e$-modules with $C_0=A\otimes_BA$, $C_{i}=(A^{\sigma}\otimes_BA)\oplus(A\otimes_B{}^{\sigma}\!A)$ for all odd $i$, and $C_{i}=(A\otimes_BA)\oplus(A\otimes_B A)$ for all even $i>0$, whose differentials $d_{i}\colon C_i\to C_{i-1}$ are defined by
\begin{alignat*}{2}
& d_{1}(1\otimes 1, 0)=x\otimes 1-1\otimes x, & \quad & d_{1}(0, 1\otimes 1)=y\otimes 1-1\otimes y,\\
\intertext{for all $j>0$,}
& d_{2j}(1\otimes 1, 0)=(y\otimes 1, 1\otimes x), & \quad & d_{2j}(0,1\otimes 1)=(1\otimes y, x\otimes 1),\\
& d_{2j+1}(1\otimes 1, 0)=(x\otimes 1, -1\otimes x), & \quad & d_{2j+1}(0,1\otimes 1)=(-1\otimes y, y\otimes 1).
\end{alignat*}
If $a\in B$ is not a zero-divisor, then $H_i(C)=0$ for all $i\neq 0$ and $H_0(C)=A$.
\end{prop}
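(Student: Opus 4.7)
The proof divides naturally into computing $H_0$ and establishing vanishing of higher homology, with the latter simplified by the visible $2$-periodicity of $C_\cdot$ above degree $0$. For $H_0$, the augmentation $\mu\colon C_0=A\otimes_B A\to A$ is surjective, and its kernel equals the $A^e$-submodule generated by the commutators $g\otimes 1-1\otimes g$ as $g$ runs over any $B$-algebra generating set of $A$---the $B$-commutation relations having already been absorbed into $\otimes_B$. Taking $g\in\{x,y\}$ gives $\ker\mu=\operatorname{im}d_1$, so $H_0(C_\cdot)=A$.

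For higher degrees, the $2$-periodicity yields $C_{2j}\cong C_2$ and $C_{2j+1}\cong C_1$ for $j\ge 1$, with $d_{2j}=d_2$ and $d_{2j+1}=d_3$ under these identifications. Hence $H_{2j}(C_\cdot)=H_2(C_\cdot)$ and $H_{2j+1}(C_\cdot)=H_3(C_\cdot)$ for $j\ge 1$, and it suffices to prove $H_1=H_2=H_3=0$. To this end, I would use the grading $A=\bigoplus_{n\in\mathbb{Z}}Bx_n$ to decompose each term of $C_\cdot$ as a direct sum of free rank-one $B$-bimodule summands $B\cdot(x_i\otimes x_j)$. Assigning to the summand at $(i,j)$ the integer $i+j$ on $A\otimes_B A$, $i+j+1$ on $A^\sigma\otimes_B A$, and $i+j-1$ on $A\otimes_B{}^\sigma\! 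A$, one checks directly from the stated formulas for $d_1,d_2,d_3$ that every differential preserves this total grading; thus $C_\cdot=\bigoplus_{n\in\mathbb{Z}}C_\cdot^{(n)}$, and it is enough to show each $C_\cdot^{(n)}$ is exact in positive degrees.

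On each $C_\cdot^{(n)}$ the differentials become matrices of $B$-linear maps whose entries, after rewriting with $xb=\sigma(b)x$, $yb=\sigma^{-1}(b)y$, $yx=a$, $xy=\sigma(a)$, reduce to multiplication by elements of the form $\sigma^k(a)$ (or by $1$). Since $a$ is not a zero-divisor and $\sigma\in\Aut(B)$, every $\sigma^k(a)$ is also a non-zero-divisor, so these multiplications are injective, and the resulting complex of free $B$-modules is exact by inspection. The main obstacle will be the bookkeeping for the twisted factors $A^\sigma\otimes_B A$ and $A\otimes_B{}^\sigma\! A$ together with the sign patterns in $d_3$: choosing the grading shifts above so that all differentials are graded is what converts the question into the single algebraic input that $a$ is a non-zero-divisor.
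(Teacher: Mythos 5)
Your decomposition of $C_\cdot$ by the total $\mathbb{Z}$-grading (assigning $i+j$ on $A\otimes_B A$, $i+j+1$ on $A^\sigma\otimes_B A$, $i+j-1$ on $A\otimes_B{}^\sigma\!A$) is correct --- I checked that $d_1$, $d_2$, $d_3$ do preserve it --- and it is a genuinely different organizing principle from the paper's. The paper works directly with the doubly-indexed standard form $\sum x_i\otimes b_{ij}x_j$ and runs a leading-term argument in the lexicographic order on $(i,j)$; your grading fixes the sum $i+j$ in advance, which collapses the bookkeeping to a single index. Your treatment of $H_0$ (kernel of the multiplication $A\otimes_B A\to A$ is generated by $x\otimes 1-1\otimes x$ and $y\otimes 1-1\otimes y$) and the reduction to $H_1=H_2=H_3=0$ via $2$-periodicity are both correct.

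The gap is the final step. Each graded piece $C_\cdot^{(n)}$ is \emph{not} a complex whose differentials are componentwise multiplications by $\sigma^k(a)$ or $1$. The summands of $C_k^{(n)}$ are indexed by $i\in\mathbb{Z}$ (with $j$ determined by $n$), so each $C_k^{(n)}$ is a free $B$-module of countably infinite rank, and the differentials shift $i$ by $\pm 1$: for example the first component of $d_1$ sends the $i$-th generator of $C_1^{(n)}$ to a difference of the form $\alpha_i\,e_{i+1}-\beta_i\,e_i$ in $C_0^{(n)}$, where $\alpha_i,\beta_i\in\{1,a,\sigma(a),\dots\}$ depend on the signs of $i$ and $n-1-i$. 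These are bi-diagonal (difference-operator) maps on an infinite-rank module, and exactness of such a complex is not ``by inspection''; it is exactly the nontrivial content. In fact the paper's proof \emph{is} the missing argument, specialized to your grading: it picks out the largest surviving index $(i',j')$ in lexicographic order (in your framing, the largest $i$ within a fixed total degree $n$), shows that the corresponding highest term is already in the image, and descends by induction. Without such a filtration or leading-coefficient argument on the remaining index $i$, the claim that $C_\cdot^{(n)}$ is exact is unsubstantiated, and this is precisely where the hypothesis that $a$ is a non-zero-divisor must actually be put to use (to cancel the highest term of one coordinate against the highest term of the other). You should supply this argument to make the proof complete.
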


\begin{proof}
It is routine to check that $d_i$'s are well defined and are differentials.

It is easy to see $H_0(C)=A$. We need to prove the exactness in degrees 1, 2, 3.

Since $A$ is a free left and right $B$-module with a basis $\{x_i\}_{i\in\mathbb{Z}}$, $A^{\sigma}\otimes_BA$, $A\otimes_B{}^{\sigma}\!A$, and $A\otimes_BA$ are all isomorphic $\bigoplus_{i,\,j\in\mathbb{Z}}x_iB x_j$ as vector spaces. Thus any element in them can be expressed to be $\sum_{i,\,j}x_i\otimes b_{ij}x_j$ uniquely with $b_{ij}\in B$. Such an expression is called standard.

(1) $\Ker d_{1}\subset\Ima d_{2}$: For any $P=(\sum_{i,\,j}x_i\otimes b_{ij}x_j,\sum_{p,\,q}x_p\otimes c_{pq}x_q)\in\Ker d_{1}$, we have
\begin{equation}\label{eq:exact0}
\sum_{i,\,j}x_ix\otimes b_{ij}x_j-\sum_{i,\,j}x_i\otimes xb_{ij}x_j=-\sum_{p,\,q}x_py\otimes c_{pq}x_q+\sum_{p,\,q}x_p\otimes yc_{pq}x_q.
\end{equation}
The standard forms of the four summations in \eqref{eq:exact0} are
\begin{align*}
  &\sum_{i\geq 0}\sum_jx_{i+1}\otimes b_{ij}x_j+\sum_{i\leq -1}\sum_jx_{i+1}\otimes ab_{ij}x_j,\\
  &\sum_{i}\sum_{j\geq 0}x_i\otimes \sigma(b_{ij})x_{j+1}+\sum_{i}\sum_{j\leq -1}x_i\otimes \sigma(b_{ij})\sigma(a)x_{j+1},\\
  &\sum_{p\leq 0}\sum_q x_{p-1}\otimes c_{pq}x_q+\sum_{p\geq 1}\sum_q x_{p-1}\otimes \sigma(a)c_{pq}x_q,\\
  &\sum_{p}\sum_{q\leq 0}x_p\otimes \sigma^{-1}(c_{pq})x_{q-1}+\sum_{p}\sum_{q\geq 1}x_p\otimes \sigma^{-1}(c_{pq})ax_{q-1}.
\end{align*}
Endow $\mathbb{Z}\times\mathbb{Z}$ with the lexicographic order. Let $(i',j')$ and $(p',q')$ be the greatest indexes such that $b_{i'j'}\neq 0$, $c_{p'q'}\neq 0$, respectively. Since $a\neq 0$, by observing the four standard forms, the highest nonzero term of the left-hand side in \eqref{eq:exact0}, $x_{i'+1}\otimes b_{i'j'}x_{j'}$ or $x_{i'+1}\otimes ab_{i'j'}x_{j'}$, must cancel that of the right-hand side, $x_{p'}\otimes \sigma^{-1}(c_{p'q'})x_{q'-1}$ or $x_{p'}\otimes \sigma^{-1}(c_{p'q'})ax_{q'-1}$. This is equivalent to
\[
x_{i'}x\otimes b_{i'j'}x_{j'}= x_{p'}\otimes yc_{p'q'}x_{q'}.
\]
It follows that $(i'+1,j')=(p',q'-1)$, and
\begin{equation}\label{eq:exact1}
x_{i'}x\otimes b_{i'j'}x_{j'}= x_{i'+1}\otimes \sigma^{-1}(c_{p'q'})yx_{j'+1}.
\end{equation}

We have the following four cases.

(i) $i'\ge 0$, $j'\le -1$. Equation \eqref{eq:exact1} becomes $x^{i'+1}\otimes b_{i'j'}y^{-j'}= x^{i'+1}\otimes \sigma^{-1}(c_{p'q'})y^{-j'}$. So $b_{i'j'}=\sigma^{-1}(c_{p'q'})$, and
\begin{align*}
&\varphantom{=}(x_{i'}\otimes b_{i'j'}x_{j'},x_{p'}\otimes c_{p'q'}x_{q'})=x^{i'}(1\otimes \sigma^{-1}(c_{p'q'})y, x\otimes c_{p'q'})y^{-q'}\\
&=x^{i'}(1\otimes yc_{p'q'}, x\otimes c_{p'q'})y^{-q'}=x^{i'}(1\otimes y, x\otimes 1)c_{p'q'}y^{-q'}\\
&=d_2(0, x^{i'}\otimes c_{p'q'}y^{-q'}).
\end{align*}

(ii) $i'\le -1$, $j'\le -1$. Equation \eqref{eq:exact1} becomes $y^{-i'-1}\otimes ab_{i'j'}y^{-j'}= y^{-i'-1}\otimes \sigma^{-1}(c_{p'q'})y^{-j'}$. So $c_{p'q'}=\sigma(a)\sigma(b_{i'j'})=xy\sigma(b_{i'j'})=xb_{i'j'}y$, and
\begin{align*}
&\varphantom{=}(x_{i'}\otimes b_{i'j'}x_{j'},x_{p'}\otimes c_{p'q'}x_{q'})=y^{-p'}(y\otimes b_{i'j'}y, 1\otimes xb_{i'j'}y)y^{-j'-1}\\
&=y^{-p'}(y\otimes 1, 1\otimes x)b_{i'j'}y^{-j'}=d_2(y^{-p'}\otimes b_{i'j'}y^{-j'},0).
\end{align*}

(iii) $i'\ge 0$, $j'\ge 0$. Equation \eqref{eq:exact1} becomes $x^{i'+1}\otimes b_{i'j'}x^{j'}= x^{i'+1}\otimes \sigma^{-1}(c_{p'q'})ax^{j'}$. So $b_{i'j'}=a\sigma^{-1}(c_{p'q'})=yx\sigma^{-1}(c_{p'q'})=yc_{p'q'}x$, and
\begin{align*}
&\varphantom{=}(x_{i'}\otimes b_{i'j'}x_{j'},x_{p'}\otimes c_{p'q'}x_{q'})=x^{i'}(1\otimes yc_{p'q'}x, x\otimes c_{p'q'}x)x^{j'}\\
&=x^{i'}(1\otimes y, x\otimes 1)c_{p'q'}x^{q'}=d_2(0, x^{i'}\otimes c_{p'q'}x^{q'}).
\end{align*}

(iv) $i'\le -1$, $j'\ge 0$. Equation \eqref{eq:exact1} becomes $y^{-i'-1}\otimes a b_{i'j'}x^{j'}= y^{-i'-1}\otimes \sigma^{-1}(c_{p'q'})ax^{j'}$. So $c_{p'q'}=\sigma(b_{i'j'})$ since $a$ is not a zero-divisor. Thus
\begin{align*}
&\varphantom{=}(x_{i'}\otimes b_{i'j'}x_{j'},x_{p'}\otimes c_{p'q'}x_{q'})=y^{-p'}(y\otimes b_{i'j'}, 1\otimes \sigma(b_{i'j'})x)x^{j'}\\
&=y^{-p'}(y\otimes b_{i'j'}, 1\otimes xb_{i'j'})x^{j'}=y^{-p'}(y\otimes 1, 1\otimes x)b_{i'j'}x^{j'}\\
&=d_2(y^{-p'}\otimes b_{i'j'}x^{j'}, 0).
\end{align*}

In each case, the highest term of $P$ lies in $\Ima d_{2}$. By considering the next highest pair
in the lexicographic order and repeating the above argument, we have $P\in\Ima d_{2}$.

(2) $\Ker d_{2}\subset\Ima d_{3}$: For any $P=(\sum_{i,\,j}x_i\otimes b_{ij}x_j,\sum_{p,\,q}x_p\otimes c_{pq}x_q)\in\Ker d_{2}$, we have
\[
\left\{
\begin{alignedat}{2}
\sum_{i,\,j}x_iy\otimes b_{ij}x_j+\sum_{p,\,q}x_p\otimes yc_{pq}x_q&=0\\
\sum_{i,\,j}x_i\otimes xb_{ij}x_j+\sum_{p,\,q}x_px\otimes c_{pq}x_q&=0.
\end{alignedat}
\right.
\]

Let $(i',j')$ and $(p',q')$ be the indexes of the highest nonzero terms in both coordinates of $P$. By a similar argument with (1), we have
\[
\left\{
\begin{alignedat}{2}
x_{i'}y\otimes b_{i'j'}x_{j'}+x_{p'}\otimes yc_{p'q'}x_{q'}&=0\\
x_{i'}\otimes xb_{i'j'}x_{j'}+x_{p'}x\otimes c_{p'q'}x_{q'}&=0.
\end{alignedat}
\right.
\]
Hence $(i'-1,j')=(p',q'-1)$ and
\begin{align}
  x_{i'}y\otimes b_{i'j'}x_{j'}+x_{p'}\otimes \sigma^{-1}(c_{p'q'})yx_{q'}&=0,\label{eq:exact2}\\
  x_{i'}\otimes \sigma(b_{i'j'})xx_{j'}+x_{p'}x\otimes c_{p'q'}x_{q'}&=0.\label{eq:exact3}
\end{align}

Consider the following four cases: (i) $i'\geq 1$, $j'\geq 0$, (ii) $i'\geq 1$, $j'\leq -1$, (iii) $i'\leq 0$, $j'\geq 0$, (iv) $i'\leq 0$, $j'\leq -1$. In each case, we obtain the results from \eqref{eq:exact2}, \eqref{eq:exact3} as follows.
\begin{enumerate}[(i)]
\item $c_{p'q'}=-\sigma(b_{i'j'})$ and $(x_{i'}\otimes b_{i'j'}x_{j'}, x_{p'}\otimes c_{p'q'}x_{q'})=x^{p'}(x\otimes 1, -1\otimes x)b_{i'j'}x^{j'}=d_3(x^{p'}\otimes b_{i'j'}x^{j'}, 0)$.
\item $c_{p'q'}=-\sigma(a)\sigma(b_{i'j'})=-xb_{i'j'}y$ and $(x_{i'}\otimes b_{i'j'}x_{j'}, x_{p'}\otimes c_{p'q'}x_{q'})=x^{p'}(x\otimes 1, -1\otimes x)b_{i'j'}y^{-j'}=d_3(x^{p'}\otimes b_{i'j'}y^{-j'}, 0)$.
\item $b_{i'j'}=-\sigma^{-1}(c_{p'q'})a=-yc_{p'q'}x$ and $(x_{i'}\otimes b_{i'j'}x_{j'}, x_{p'}\otimes c_{p'q'}x_{q'})=y^{-i'}(-1\otimes y, y\otimes 1)c_{p'q'}x^{q'}=d_3(0, y^{-i'}\otimes c_{p'q'}x^{q'})$.
\item $b_{i'j'}=-\sigma^{-1}(c_{p'q'})$ and $(x_{i'}\otimes b_{i'j'}x_{j'}, x_{p'}\otimes c_{p'q'}x_{q'})=y^{-i'}(-1\otimes y, y\otimes 1)c_{p'q'}y^{-q'}=d_3(0, y^{-i'}\otimes c_{p'q'}y^{-q'})$.
\end{enumerate}
Just as what we did in (1), we have $P\in \Ima d_{3}$.

(3) $\Ker d_{3}\subset\Ima d_{4}$: This case is similar with (2), so we omit the proof.
\end{proof}

\subsection{Construction of homotopy double complex}

As stated in Theorem \ref{thm:proj.resolution}, a projective resolution of the complex $C_{\cdot}$ can be constructed if there is a suitable homotopy double complex $(\calP_{\cdot\cdot},d^v,d^h,r)$ on the upper-half plane. In particular, applying Theorem \ref{thm:proj.resolution} to Proposition \ref{prop:long-sequence} if such $\calP_{\cdot\cdot}$ exists, $\Tot \calP_{\cdot\cdot}$ is an $A^e$-projective resolution of $A$.

The question is how to construct a homotopy double complex from the periodic complex $C_\cdot$.  Note that we have $C_0=A\otimes_BA\cong A\otimes_B B\otimes_B A$, and that we have similar presentations for all the $C_p$. Note also that each of the functors $A\otimes_B-$ and $-\otimes_B A$ are exact, since $A$ is flat (free) over $B$. So from a projective bimodule resolution $\mathcal{K}_\cdot$ of $B$ we get a resolution $\calP_{0,\cdot}=A\otimes_B \mathcal{K}_\cdot\otimes_B A$ of $C_0$ whose differentials are denoted by $d^v_{0,\cdot}$. Projectivity of the new resolution $\calP_{0,\cdot}$ over $A^e$ follows form the projectivity of $\mathcal{K}_\cdot$ over $B^e$. By a similar process, we get resolutions $\calP_{p,\cdot}$ of all the $C_p$. Namely, we obtain a family $\{\calP_{pq}\}_{p,q}$ of projective $A^e$-modules as well as differentials $d^v$ of degree $(0,-1)$.

Next, due to the Comparison Lemma, each differential $d_p\colon C_p\to C_{p-1}$ lifts to a morphism $\calP_{p,\cdot}\to \calP_{p-1,\cdot}$ of complexes. Using the sign's trick, we obtain $d^h_{pq}\colon \calP_{pq}\to \calP_{p-1,q}$ such that the second equation in \eqref{eq:homotopy-double-complex} is satisfied. It follows from $d_{p-1}d_p=0$ that $d^h_{p-1,\cdot}d^h_{p,\cdot}$ is null homotopic. This in turn indicates the existence of homotopy $r_{pq}\colon \calP_{pq}\to \calP_{p-2,q+1}$ such that the third equation in \eqref{eq:homotopy-double-complex} is also satisfied (Here we use $r$ instead of $s$). If the last two hold also, then $(\calP_{\cdot\cdot},d^v,d^h,r)$ is a homotopy double complex as required.

We will not discuss the existence of homotopy double complexes for a general GWA. Instead, let us restrict our attention to the special case: $B=\kk[z]$. In this case, $a$ is not a zero-divisor if and only if $a\neq 0$. Let $a=\varphi(z)=\sum_{i=0}^{l}a_iz^i$ with $a_l\neq 0$. Since $\sigma(z)$ must be of the form $\lambda z+\eta$ for some $\lambda\in\kk^\times$, $\eta\in\kk$, we write the GWA $A=B(\sigma,a)$ as $\kk[z;\lambda,\eta,\varphi(z)]$.

Choose bimodule projective resolutions of $\kk[z]$ to be
\[
0\to \kk[z]\otimes \kk[z]\xrightarrow{\delta_c} \kk[z]\otimes \kk[z] \rightarrow \kk[z] \to 0
\]
where $\delta_c(1\otimes 1)=c(z\otimes 1-1\otimes z)$ for any $c\in\kk^\times$. According to the construction of $P_{\cdot\cdot}$ given before, we have $\calP_{00}=\calP_{01}=A\otimes A$, $\calP_{10}=\calP_{11}=(A^\sigma\otimes A)\oplus (A\otimes{}^\sigma\! A)$, and so on. Note that as $A$-bimodules, $A^\sigma\otimes A=A\otimes{}^\sigma\! A=A\otimes A$, since the actions are all given by
 \[ a\triangleright(u\otimes v)\triangleleft b=au\otimes vb.\]
So we have $\calP_{j0}=\calP_{j1}=(A\otimes A)^{\oplus 2}$ for $j\geq 1$. The nonzero vertical differentials are only $d^v_{p1}$ for all $p\geq 0$, which are written as $d^v_p$ for short. Let $d^v_0$ be induced by $\delta_1$, $d^v_j$ be induced by $(\delta_1,\delta_{\lambda^{-1}})$ for all odd $j$, and by $(\delta_1,\delta_1)$ for all positive even $j$. Explicitly, $d^v_0(1\otimes 1)=z\otimes 1-1\otimes z$,
\begin{align*}
  d^v_1(1\otimes 1, 0)&=(\sigma(z)\otimes 1-1\otimes z, 0), & d^v_1(0, 1\otimes 1)&=(0, \sigma^{-1}(z)\otimes 1-1\otimes z),\\
  d^v_2(1\otimes 1, 0)&=(z\otimes 1-1\otimes z, 0), & d^v_2(0, 1\otimes 1)&=(0, z\otimes 1-1\otimes z),
\end{align*}
and so on.

By the Comparison Lemma and the sign's trick, we define $d^h_{\cdot,0}$, $d^h_{\cdot,1}$ as follows,
\begin{alignat*}{2}
& d^h_{10}(1\otimes 1, 0)=x\otimes 1-1\otimes x, & \quad & d^h_{10}(0, 1\otimes 1)=y\otimes 1-1\otimes y,\\
& d^h_{2j,0}(1\otimes 1, 0)=(y\otimes 1, 1\otimes x), & \quad & d^h_{2j,0}(0,1\otimes 1)=(1\otimes y, x\otimes 1),\\
& d^h_{2j+1,0}(1\otimes 1, 0)=(x\otimes 1, -1\otimes x), & \quad & d^h_{2j+1,0}(0,1\otimes 1)=(-1\otimes y, y\otimes 1),\\
& d^h_{11}(1\otimes 1,0)=-x\otimes 1+\lambda\otimes x, &\quad& d^h_{11}(0,1\otimes 1)=-y\otimes 1+\lambda^{-1}\otimes y,\\
& d^h_{2j,1}(1\otimes 1,0)=(-y\otimes 1,-\lambda\otimes  x), &\quad& d^h_{2j,1}(0,1\otimes 1)=(-\lambda^{-1}\otimes y, -x\otimes 1),\\
& d^h_{2j+1,1}(1\otimes 1,0)=(-x\otimes 1, \lambda \otimes x), &\quad& d^h_{2j+1,1}(0,1\otimes 1)=(\lambda^{-1}\otimes y, -y\otimes 1).
\end{alignat*}

After that, let us construct $r_{p}=r_{p0}\colon \calP_{p0}\to\calP_{p-2,1}$. Define the linear map $\Delta_0\colon \kk[z]\to \kk[z]\otimes\kk[z]$ by $\Delta_0(1)=0$ and $\Delta_0(z^k)=\sum_{i=1}^{k}z^{k-i}\otimes z^{i-1}$ for $k\ge 1$. Let $\iota\colon\kk[z]\hookrightarrow A$ be the natural embedding. For any $\kk$-linear endomorphisms $f$, $g$ of $\kk[z]$, we denote $(\iota\otimes\iota)\circ(f\otimes g)\circ\Delta_0$ by ${}^f\!\Delta^g$, and usually suppress $f$ or $g$ if it is the identity map. By a direct computation, we have
\begin{align*}
d^h_{10}d^h_{20}(1\otimes 1, 0)&=\varphi(z)\otimes 1-1\otimes \varphi(z),\\
d^h_{10}d^h_{20}(0, 1\otimes 1)&=\sigma(\varphi(z))\otimes 1-1\otimes \sigma(\varphi(z)),\\
d^h_{2j,0}d^h_{2j+1,0}(1\otimes 1, 0)&=(\sigma(\varphi(z))\otimes 1-1\otimes \varphi(z), 0),\\
d^h_{2j,0}d^h_{2j+1,0}(0, 1\otimes 1)&=(0, \varphi(z)\otimes 1-1\otimes \sigma(\varphi(z))),\\
d^h_{2j+1,0}d^h_{2j+2,0}(1\otimes 1, 0)&=(\varphi(z)\otimes 1-1\otimes \varphi(z), 0),\\
d^h_{2j+1,0}d^h_{2j+2,0}(0, 1\otimes 1)&=(0, \sigma(\varphi(z))\otimes 1-1\otimes \sigma(\varphi(z))).
\end{align*}
Thus $r$ are defined by
\begin{alignat*}{2}
& r_{2}(1\otimes 1, 0)=-\Delta(\varphi(z)), &\quad& r_{2}(0,1\otimes 1)=-\lambda{}^{\sigma}\!\Delta^{\sigma}(\varphi(z)),\\
& r_{2j+1}(1\otimes 1, 0)=(-{}^{\sigma}\!\Delta(\varphi(z)), 0), &\quad& r_{2j+1}(0,1\otimes 1)=(0, -\lambda\Delta^{\sigma}(\varphi(z))),\\
& r_{2j+2}(1\otimes 1, 0)=(-\Delta(\varphi(z)), 0), &\quad& r_{2j+2}(0,1\otimes 1)=(0, -\lambda{}^{\sigma}\!\Delta^{\sigma}(\varphi(z))).
\end{alignat*}

\begin{prop}\label{prop:proj.dim=1}
Let $A=\kk[z;\lambda,\eta,\varphi(z)]$. The above formulas make $(\calP_{\cdot\cdot},d^v,d^h,r)$
\[
\xymatrix@C=10mm@R=12mm{
\calP_{01} \ar[d]^(0.4){d^v_0} & \calP_{11} \ar[l]_-{d^h_{11}}\ar[d]^(0.4){d^v_1} & \calP_{21} \ar[l]_-{d^h_{21}}\ar[d]^(0.4){d^v_2} & \calP_{31} \ar[l]_-{d^h_{31}}\ar[d]^(0.4){d^v_3} & \calP_{41} \ar[l]_-{d^h_{41}}\ar[d]^(0.4){d^v_4} & \cdots\hphantom{.} \ar[l] \\
\calP_{00} & \calP_{10} \ar[l]^-{d^h_{10}} & \calP_{20} \ar[l]^-{d^h_{20}}\ar[llu]^(0.6){r_2}|!{[lu];[l]}\hole & \calP_{30} \ar[l]^-{d^h_{30}}\ar[llu]^(0.6){r_3}|!{[lu];[l]}\hole & \calP_{40} \ar[l]^-{d^h_{40}}\ar[llu]^(0.6){r_4}|!{[lu];[l]}\hole & \cdots \ar[l]
}
\]
into a homotopy double complex, and thus $\Tot \calP_{\cdot\cdot}$ is an $A^e$-projective resolution of $A$.
\end{prop}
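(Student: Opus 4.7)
My plan has two stages: first verify the five relations of \eqref{eq:homotopy-double-complex}, so that $(\calP_{\cdot\cdot},d^v,d^h,r)$ really is a homotopy double complex, and then invoke Theorem \ref{thm:proj.resolution} together with Proposition \ref{prop:long-sequence} to conclude that $\Tot\calP_{\cdot\cdot}$ is an $A^e$-projective resolution of $A$.

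Two of the five relations are automatic. Because each column $\calP_{p,\cdot}$ is concentrated in rows $0$ and $1$, $(d^v)^2=0$; because $r$ has bidegree $(-2,+1)$, its iterate lands in row $2$ and so $r^2=0$. Of the remaining three, the anticommutativity $d^hd^v+d^vd^h=0$ is a finite check on the generators $(1\otimes 1,0)$ and $(0,1\otimes 1)$ of each $\calP_{pq}$: the signs introduced in $d^h_{\cdot,1}$ relative to $d^h_{\cdot,0}$ are put in exactly to make this work (the standard sign trick), and one only has to expand both sides using $\sigma(z)=\lambda z+\eta$. The relation $d^hr+rd^h=0$ is an equally mechanical term-by-term verification on the row-$0$ generators, since both $r$ and $d^h$ are already known explicitly.

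The heart of the matter is the homotopy identity $(d^h)^2+d^vr+rd^v=0$. Since $r$ vanishes on row $1$ and $d^v$ vanishes on row $0$, the term $rd^v$ contributes nothing, and the identity reduces to $d^vr=-(d^h)^2$ on each generator of row $0$. The compositions $d^h_{p-1,0}d^h_{p,0}$ have already been tabulated in the text preceding the proposition, yielding expressions of the form $p(z)\otimes 1-1\otimes p(z)$ with $p(z)\in\{\varphi(z),\sigma(\varphi(z))\}$. The key telescoping identities, which drive the definition of $r$, are
\[
d^v_0\bigl(\Delta(p)\bigr)=p\otimes 1-1\otimes p,\qquad \lambda\,d^v_0\bigl({}^\sigma\!\Delta^\sigma(p)\bigr)=\sigma(p)\otimes 1-1\otimes\sigma(p),
\]
together with the analogous one-sided versions for ${}^\sigma\!\Delta$ and $\Delta^\sigma$. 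The factor of $\lambda$ in the second identity is exactly what forces the $\lambda$ in the formulas for $r$: rewriting $\lambda z=\sigma(z)-\eta$ causes the $\eta$-terms to cancel pairwise in the telescope. Running through the four groups of cases (odd vs.\ even $p$, first vs.\ second coordinate) then yields $d^vr=-(d^h)^2$ on every generator.

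Once these relations are in hand, Theorem \ref{thm:proj.resolution}(1) applies, since $\calP_{pq}=0$ for $q>1$. Each column $\calP_{p,\cdot}$ is a projective resolution of $C_p$: freeness of $A$ as a left and right $B$-module shows that applying $A\otimes_B(-)\otimes_BA$ to the short bimodule resolution $\kk[z]\otimes\kk[z]\xrightarrow{\delta_c}\kk[z]\otimes\kk[z]\to\kk[z]$ (with $c$ chosen according to the twist required) preserves exactness, and projectivity over $A^e$ is automatic because each $\calP_{pq}$ is a free $A^e$-module. The row-$0$ differentials $d^h_{\cdot,0}$ coincide with the differentials of $C_\cdot$ in Proposition \ref{prop:long-sequence}, so the theorem delivers a quasi-isomorphism $\Tot\calP_{\cdot\cdot}\simeq C_\cdot$. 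Since $\varphi(z)\ne 0$ is not a zero-divisor in $\kk[z]$, Proposition \ref{prop:long-sequence} supplies $H_i(C)=0$ for $i>0$ and $H_0(C)=A$, so $\Tot\calP_{\cdot\cdot}$ is an $A^e$-projective resolution of $A$. The principal obstacle throughout is the bookkeeping in the homotopy identity, where one must correctly pair the $\sigma$-twists arising in $d^hd^h$ with those in $r$; nothing is conceptually subtle, but the sign- and twist-tracking deserves care.
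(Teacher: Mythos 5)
The main gap is in your treatment of the homotopy identity $(d^h)^2+d^vr+rd^v=0$. You assert that ``$r$ vanishes on row $1$ and $d^v$ vanishes on row $0$, [so] the term $rd^v$ contributes nothing,'' and then check only the row-$0$ equation $d^vr=-(d^h)^2$. But $rd^v$ means $d^v$ first, then $r$: on a generator of $\calP_{p,1}$ one applies $d^v_p$ to land in $\calP_{p,0}$ and then $r_p$ to land in $\calP_{p-2,1}$, which is nonzero in general. So the identity on row $1$ reads $(d^h)^2+rd^v=0$, and it is not trivial; for instance one computes
\[
d^h_{11}d^h_{21}(1\otimes 1,0)=\varphi(z)\otimes 1-1\otimes\varphi(z)\neq 0,
\]
so $(d^h)^2$ on row $1$ is not zero and genuinely has to be cancelled by $rd^v$. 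Your verification of the third relation is therefore incomplete.

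The fix is short and is the same injectivity trick the paper uses for the fourth relation. Apply $d^v_{p-2}$ to $d^h_{p-1,1}d^h_{p,1}+r_pd^v_p$; anticommuting $d^v$ past the two $d^h$'s via $d^hd^v+d^vd^h=0$ turns the first term into $d^h_{p-1,0}d^h_{p,0}d^v_p$, while the row-$0$ identity $d^v_{p-2}r_p=-d^h_{p-1,0}d^h_{p,0}$ turns the second term into $-d^h_{p-1,0}d^h_{p,0}d^v_p$; the sum is zero, and since $d^v_{p-2}$ is injective (each column is a resolution with $\calP_{p,2}=0$), the row-$1$ identity follows. This is precisely how the paper proves $d^hr+rd^h=0$, namely
\[
d^v_{p-3}d^h_{p-2,1}r_p=-d^h_{p-2,0}d^v_{p-2}r_p=d^h_{p-2,0}d^h_{p-1,0}d^h_{p0}=-d^v_{p-3}r_{p-1}d^h_{p0},
\]
followed by cancelling the injective $d^v_{p-3}$. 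Your direct term-by-term verification of $d^hr+rd^h=0$ would also work, but the indirect argument is shorter, and more importantly, it is the same argument you need anyway to close the gap in the third identity.
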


\begin{proof}
It suffices to verify the last two equations of \eqref{eq:homotopy-double-complex}.

By the definition of $r$, we have $d^h_{p-1,0}d^h_{p0}=-d^v_{p-2}r_p$ for all $p$. Then
\[
d^v_{p-3}d^h_{p-2,1}r_p=-d^h_{p-2,0}d^v_{p-2}r_p=d^h_{p-2,0}d^h_{p-1,0}d^h_{p0}=-d^v_{p-3}r_{p-1}d^h_{p0}.
\]
Since $d^v_{p-3}$ is injective, we have $d^h_{p-2,1}r_p+r_{p-1}d^h_{p0}=0$ for all $p$, which expresses that the fourth equation of \eqref{eq:homotopy-double-complex} is fulfilled. Since $r_p$ are only nonzero homotopy, the fifth equation holds trivially.
\end{proof}

\begin{remark}
Following \cite{Richard-Solotar:isom-quantum-GWA}, the GWA $\kk[z;\lambda,\eta,\varphi(z)]$ is called \textit{classical} if $\lambda=1$, $\eta\neq 0$, called \textit{quantum} if $\lambda\neq 1$, $\eta=0$.
In both cases, $A^e$-projective resolutions of $A$ are constructed in \cite{Farinati:Hochschid-homology-GWA} and \cite{Solotar:Hochschild-homology-GWA-quantum} respectively, via Smith algebra \cite{Smith:Smith-algebra}, i.e., the algebra $B_3$ given in Sect.\ \ref{sec:homological-smoothness}. Our results coincide with theirs, but in a different way.
\end{remark}

\section{Homological smoothness}\label{sec:homological-smoothness}
In this section, let $A=\kk[z;\lambda,\eta,\varphi(z)]$.

Using the homotopy double complex $\calP_{\cdot\cdot}$, the Hochschild cohomology of $A$ with coefficients in an $A$-bimodule $M$ can be computed. In fact, let $\calQ^{pq}=\Hom_{A^e}(\calP_{pq},M)$, and $\partial_v^{pq}$, $\partial_h^{pq}$, $s^{pq}$ be the maps obtained by letting $\Hom_{A^e}(-,M)$ act on $d^v_{p,q+1}$, $d^h_{p+1,q}$, $r_{p+2,q-1}$ respectively. Then $(\calQ^{\cdot\cdot},\partial_v,\partial_h,s)$ is also a homotopy double complex and
\[
H^n(A,M)=H^{n}(\Hom_{A^e}(\Tot\calP_{\cdot\cdot},M))= H^{n}(\Tot\calQ^{\cdot\cdot}).
\]
Since the complex $\Tot\calQ^{\cdot\cdot}$ is periodic, an element in $Z^n(\Tot\calQ^{\cdot\cdot})$ (resp.\ $B^n(\Tot\calQ^{\cdot\cdot})$) is called a Per $n$-cocycle (resp.\ Per $n$-coboundary).

Concretely, $\calQ^{\cdot\cdot}$ is given as follows,
\[
\xymatrix@R=10mm@C=7mm{
M \ar[r]^-{\partial_h^{01}}\ar[rrd]^(0.6){s^0}|!{[r];[rd]}\hole  & M\oplus M \ar[r]^-{\partial_h^{11}}\ar[rrd]^(0.6){s^1}|!{[r];[rd]}\hole & M\oplus M \ar[r]^-{\partial_h^{21}}\ar[rrd]^(0.6){s^2}|!{[r];[rd]}\hole & M\oplus M \ar[r]^-{\partial_h^{31}} & M\oplus M \ar[r] & \cdots \\
M \ar[r]_-{\partial_h^{00}}\ar[u]^(0.4){\partial_v^{0}}  & M\oplus M \ar[r]_-{\partial_h^{10}}\ar[u]^(0.4){\partial_v^{1}} & M\oplus M \ar[r]_-{\partial_h^{20}}\ar[u]^(0.4){\partial_v^{2}} & M\oplus M \ar[r]_-{\partial_h^{30}}\ar[u]^(0.4){\partial_v^{3}} & M\oplus M \ar[r]\ar[u]^(0.4){\partial_v^{4}} & \cdots
}
\]
where for all $j\geq 1$,
\begin{align*}
\partial_h^{00}(m)&=(xm-mx,ym-my),\\
\partial_h^{2j-1,0}(m_1,m_2)&=(ym_1+m_2x,m_1y+xm_2),\\
\partial_h^{2j,0}(m_1,m_2)&=(xm_1-m_2x,-m_1y+ym_2),\\
\partial_h^{01}(m)&=(-xm+\lambda mx,-ym+\lambda^{-1}my),\\
\partial_h^{2j-1,1}(m_1,m_2)&=(-ym_1-\lambda m_2x,-\lambda^{-1}m_1y-xm_2),\\
\partial_h^{2j,1}(m_1,m_2)&=(-xm_1+\lambda m_2x, \lambda^{-1}m_1y-ym_2),\\
\partial_v^{0}(m)&=zm-mz,\\
\partial_v^{2j-1}(m_1,m_2)&=(\sigma(z)m_1-m_1z,\lambda^{-1}zm_2-\lambda^{-1}m_2\sigma(z)),\\
\partial_v^{2j}(m_1,m_2)&=(zm_1-m_1z,\lambda^{-1}\sigma(z)m_2-\lambda^{-1}m_2\sigma(z)),\\
s^{0}(m)&=(-\Delta(\varphi)\cdot m,-\lambda{}^{\sigma}\!\Delta^{\sigma}(\varphi)\cdot m),\\
s^{2j-1}(m_1, m_2)&=(-{}^{\sigma}\!\Delta(\varphi)\cdot m_1, -\lambda\Delta^{\sigma}(\varphi)\cdot m_2),\\
s^{2j}(m_1, m_2)&=(-\Delta(\varphi)\cdot m_1, -\lambda{}^{\sigma}\!\Delta^{\sigma}(\varphi)\cdot m_2).
\end{align*}

Let $\partial^{\cdot}$ be the differentials of $\Tot\calQ^{\cdot\cdot}$. Denote by $\varphi'(z)$ the formal derivative of $\varphi(z)$.

\begin{lem}\label{lem:no-multi-root}
If $\varphi(z)$ has no multiple roots, then $H^{3}(\Tot\calQ^{\cdot\cdot})=0$ for all $A$-bimodules $M$.
\end{lem}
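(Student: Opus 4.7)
The plan is to show directly that every Per $3$-cocycle in $\Tot\calQ^{\cdot\cdot}$ is a Per $3$-coboundary. A $3$-cochain is a quadruple $(m_1,m_2,n_1,n_2)\in \calQ^{3,0}\oplus\calQ^{2,1}\cong M^{\oplus 4}$, and a $2$-cochain has the same shape $(u_1,u_2,v_1,v_2)\in \calQ^{2,0}\oplus\calQ^{1,1}\cong M^{\oplus 4}$. Unpacking $\partial^3=0$ yields two relations in $\calQ^{4,0}$ coming from $\partial_h^{3,0}+s^{2,1}=0$,
\begin{gather*}
  ym_1+m_2 x=\Delta(\varphi)\cdot n_1,\qquad m_1 y+xm_2=\lambda\,{}^{\sigma}\!\Delta^{\sigma}(\varphi)\cdot n_2,
\end{gather*}
together with two relations in $\calQ^{3,1}$ coming from $\partial_v^{3}+\partial_h^{2,1}=0$,
\begin{gather*}
  \sigma(z)m_1-m_1z=xn_1-\lambda n_2 x,\qquad zm_2-m_2\sigma(z)=\lambda y n_2-n_1 y.
\end{gather*}
Similarly, $\partial^2(u_1,u_2,v_1,v_2)=(m_1,m_2,n_1,n_2)$ becomes a parallel system of four $M$-valued equations which I will need to solve for $u_i,v_i$.

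The algebraic input is twofold. First, a direct telescoping calculation yields the identity $[z,\Delta(\varphi)\cdot m]=\varphi(z)m-m\varphi(z)$ for every $m\in M$, and its three $\sigma$-twisted analogues for the operators ${}^{\sigma}\!\Delta$, $\Delta^{\sigma}$, ${}^{\sigma}\!\Delta^{\sigma}$ in which the appropriate twisted commutator replaces $[z,-]$. In particular, $\Delta(\varphi)$ acts as multiplication by $\varphi'(z)$ on elements commuting with $z$, and analogously for the twisted variants on the appropriate twisted-central elements. Second, the hypothesis that $\varphi(z)$ has no multiple roots is exactly $\gcd(\varphi,\varphi')=1$, so Bezout furnishes $p(z),q(z)\in\kk[z]$ with $p\varphi+q\varphi'=1$. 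Combining the two, for any $m\in M$ with $zm=mz$ one has $m=\Delta(\varphi)\cdot(q(z)m)+p(z)\varphi(z)m$; that is, $\Delta(\varphi)$ is invertible modulo the image of $[\varphi(z),-]$, and the same mechanism works for the three twisted operators. This inversion identity is the engine that drives the whole argument.

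The construction itself I envisage in two stages. First, use the freedom in $(u_1,u_2)$ to absorb $n_1,n_2$ modulo the vertical differential $\partial_v^{3}$: the obstruction lies in $M/[z,M]\oplus M/[\sigma(z),M]$, but the pair $\partial_v^{3}+\partial_h^{2,1}=0$ rewrites these obstruction classes as $x$- and $y$-commutators of $n_1,n_2$, which the inversion-by-Bezout step exhibits as Per coboundaries. After this reduction, the remaining pair of equations takes the shape ${}^{\sigma}\!\Delta(\varphi)\cdot v_1=(\text{known})$ and $\Delta^{\sigma}(\varphi)\cdot v_2=(\text{known})$, whose right-hand sides lie in the image of the respective operators by the first pair of cocycle equations, and $v_1,v_2$ are produced by a final application of Bezout. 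A compensating adjustment to $u_1,u_2$ then closes the system and yields the desired 2-cochain.

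The main obstacle I anticipate is bookkeeping of the $\sigma$-twists: four distinct $\Delta$-variants appear in the definition of $s$, the vertical differentials carry different twists on odd and even columns, and Bezout must be invoked with $\sigma(p),\sigma(q)$ as well as $p,q$ in the appropriate positions, so verifying that a single quadruple $(u_1,u_2,v_1,v_2)$ satisfies all four coboundary equations simultaneously is the crux. If the direct substitution becomes unwieldy, a viable fallback is the column spectral sequence of $\calQ^{\cdot\cdot}$: since $\calQ^{pq}=0$ for $q\ge 2$, only $d_2$ induced by $s$ can be nonzero, the sequence collapses at $E_3$, and the lemma reduces to showing that $d_2\colon E_2^{1,1}\to E_2^{3,0}$ is surjective and $d_2\colon E_2^{2,1}\to E_2^{4,0}$ is injective, both of which follow from the Bezout inversion applied to the relevant twisted commutator quotients of $M$.
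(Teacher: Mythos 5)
Your algebraic toolkit is the same as the paper's: the Bezout pair $\alpha\varphi+\beta\varphi'=1$ and the telescoping identity $z(\Delta(\varphi)\cdot m)-(\Delta(\varphi)\cdot m)z=\varphi m-m\varphi$, together with the fact that $\Delta(\varphi)$ (and its three $\sigma$-twists) collapse to multiplication by $\varphi'$ on elements commuting or $\sigma$-twist-commuting with $z$. This is exactly the engine the paper runs; so far so good.

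The sketch of the construction, however, has its signals crossed, and I don't believe it would come together as written. In your stage 1 you want to kill the $\calQ^{2,1}$-component $(n_1,n_2)$ modulo $\mathrm{im}\,\partial_v^{2}$, whose cokernel is $M/[z,M]\oplus M/[\sigma(z),M]$; you then appeal to the cocycle relations from $\partial_v^{3}+\partial_h^{2,1}=0$, namely $\sigma(z)m_1-m_1z=xn_1-\lambda n_2x$ and $zm_2-m_2\sigma(z)=\lambda yn_2-n_1y$. But these equations constrain the twisted $z$-commutators of $m_1,m_2$ (your $\calQ^{3,0}$-component), not the classes of $n_1,n_2$ in $M/[z,M]$ and $M/[\sigma(z),M]$; the implication runs in the wrong direction for the absorption you describe. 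Your stage 2 then proposes to solve ${}^{\sigma}\!\Delta(\varphi)\cdot v_1=(\text{known})$ and $\Delta^{\sigma}(\varphi)\cdot v_2=(\text{known})$ by ``one more Bezout,'' but the Bezout inversion is only available when the argument (twist-)commutes with $z$, and nothing in the sketch guarantees the right-hand sides land in the relevant twisted centralizers. The paper sidesteps both problems: it does not try to \emph{invert} the $\Delta$-operators at all, but \emph{guesses} the $\calQ^{1,1}$-entry $v_1:=-m_1\beta$ (in your names), then derives ${}^{\sigma}\!\Delta(\varphi)\cdot(m_1\beta)=m_1\beta\varphi'+(\cdots)$ by iterating the cocycle equations, uses $\beta\varphi'=1-\alpha\varphi=1-\alpha yx$ to peel off the leading term, and builds $u_1,u_2,v_2$ to cancel the remainder, finally verifying the fourth coordinate is recovered. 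That order of operations — start from the $\calQ^{1,1}$-component, not $\calQ^{2,0}$, and verify rather than invert — is what makes the bookkeeping close.

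Your spectral-sequence fallback is legitimate in principle: since $\calQ^{pq}=0$ for $q\geq 2$, Theorem \ref{thm:proj.resolution} (in its cochain form) gives a convergent column spectral sequence, it degenerates at $E_3$, and $H^3=0$ is equivalent to surjectivity of $d_2\colon E_2^{1,1}\to E_2^{3,0}$ together with injectivity of $d_2\colon E_2^{2,1}\to E_2^{4,0}$. But the $E_2$-pages are horizontal cohomology of twisted-centralizer and twisted-commutator-quotient groups, and showing $d_2$ has the stated properties still forces you to run the very same Bezout-plus-$\Delta$ computations on representatives; it reorganizes the argument but does not shorten it. As it stands, then, your proposal identifies the correct key lemma but contains a genuine gap in the stage-1 reduction and an unjustified invertibility claim in stage 2.
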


\begin{proof}
During the proof, we sometimes write a polynomial $h(z)$ as $h$, for simplicity. Fix $\alpha(z)$, $\beta(z)\in\kk[z]$ such that $\alpha(z)\varphi(z)+\beta(z)\varphi'(z)=1$.

Suppose that $(m_1,m_2,m_3,m_4)$ is a Per $3$-cocycle. Then $\partial_h^{21}(m_1,m_2)+\partial_v^{3}(m_3,m_4)=0$ and $s^{2}(m_1,m_2)+\partial_h^{30}(m_3,m_4)=0$, that is,
\begin{align}
-xm_1+\lambda m_2x+\sigma(z)m_3-m_3z&=0,\label{eq:homological-smooth-1}\\
m_1y-\lambda y m_2+zm_4-m_4\sigma(z)&=0,\label{eq:homological-smooth-2}\\
-\Delta(\varphi)\cdot m_1+ym_3+m_4x&=0,\label{eq:homological-smooth-3}\\
-\lambda{}^{\sigma}\!\Delta^{\sigma}(\varphi)\cdot m_2+m_3y+xm_4&=0.\label{eq:homological-smooth-4}
\end{align}
By induction, we obtain from \eqref{eq:homological-smooth-1}, \eqref{eq:homological-smooth-2} that for any $j\ge 1$,
\begin{align*}
\sigma(z)^jm_3-m_3z^j&=x(\Delta(z^{j})\cdot m_1)- \lambda({}^{\sigma}\!\Delta^{\sigma}(z^{j})\cdot m_2)x,\\
m_4\sigma(z)^j-z^jm_4&=(\Delta(z^{j})\cdot m_1)y- \lambda y ({}^{\sigma}\!\Delta^{\sigma}(z^{j})\cdot m_2).
\end{align*}
Thus
\begin{align*}
&\varphantom{=}\Delta(\varphi)\cdot (m_3\beta)=\sum_{i=1}^{l}a_i\sum_{j=1}^{i}\sigma(z)^{i-j}m_3\beta z^{j-1}\\
&=\sum_{i=1}^{l}\sum_{j=1}^{i}a_i\bigl(m_3z^{i-j}+x(\Delta(z^{i-j})\cdot m_1)-\lambda(\Delta(z^{i-j})\cdot m_2)x\bigr)\beta z^{j-1}\\
&=m_3\beta\varphi'+\sum_{i=2}^{l}\sum_{j=1}^{i-1}\sum_{k=1}^{i-j}a_i(xz^{i-j-k}m_1z^{j+k-2}-\lambda\sigma(z)^{i-j-k} m_2\sigma(z)^{j+k-2}x)\beta\\
&=m_3\beta\varphi'+\sum_{i=2}^{l}\sum_{j=1}^{i-1}\sum_{k=j+1}^{i}a_i(xz^{i-k}m_1z^{k-2}-\lambda\sigma(z)^{i-k} m_2\sigma(z)^{k-2}x)\beta\\
&=m_3\beta\varphi'+\sum_{i=2}^{l}\sum_{k=2}^{i}\sum_{j=1}^{k-1}a_i(xz^{i-k}m_1z^{k-2}-\lambda\sigma(z)^{i-k} m_2\sigma(z)^{k-2}x)\beta\\
&=m_3\beta\varphi'+\sum_{i=2}^{l}\sum_{k=2}^{i}(k-1)a_i(xz^{i-k}m_1z^{k-2}-\lambda\sigma(z)^{i-k} m_2\sigma(z)^{k-2}x)\beta\\
&=m_3\beta\varphi'+\sum_{i=1}^{l}\sum_{k=1}^{i}a_i(xz^{i-k}m_1(z^{k-1})'-\lambda\sigma(z^{i-k})m_2\sigma((z^{k-1})')x)
\beta\\
&=m_3\beta\varphi'+x(\Delta^D(\varphi)\cdot m_1)\beta-\lambda({}^{\sigma}\!\Delta^{\sigma D}(\varphi)\cdot m_2)\sigma(\beta)x,
\end{align*}
where $D=\rmd/\rmd z$. Similarly,
\begin{equation}\label{eq:homological-smooth-5}
{}^{\sigma}\!\Delta^{\sigma}(\varphi)\cdot (m_4\sigma(\beta))=m_4\sigma(\beta\varphi')-(\Delta^{D}(\varphi)\cdot m_1)\beta y+\lambda y({}^{\sigma}\!\Delta^{\sigma D}(\varphi)\cdot m_2)\sigma(\beta).
\end{equation}

Let $n_1=-m_3\beta$. The first component of $s^{1}(n_1,0)$ is
\begin{align*}
  &\varphantom{=}m_3-m_3\alpha yx+x(\Delta^D(\varphi)\cdot m_1)\beta-\lambda({}^{\sigma}\!\Delta^{\sigma D}(\varphi)\cdot m_2)\sigma(\beta)x\\
  &=m_3+x(\Delta^D(\varphi)\cdot m_1)\beta-(m_3\alpha y+\lambda({}^{\sigma}\!\Delta^{\sigma D}(\varphi)\cdot m_2)\sigma(\beta))x.
\end{align*}
Let
\begin{align*}
n_3&=-(\Delta^D(\varphi)\cdot m_1)\beta,\\
n_4&=-m_3\alpha y-\lambda({}^{\sigma}\!\Delta^{\sigma D}(\varphi)\cdot m_2)\sigma(\beta).
\end{align*}
Clearly, the first component of $s^{1}(n_1,0)+\partial_h^{20}(n_3,n_4)$ equals $m_3$, and the second one is equal to
\begin{equation}\label{eq:homological-smooth-6}
    (\Delta^D(\varphi)\cdot m_1)\beta y-ym_3\alpha y-\lambda y({}^{\sigma}\!\Delta^{\sigma D}(\varphi)\cdot m_2)\sigma(\beta).
\end{equation}

Next, consider the difference between $(m_1,m_2)$ and $\partial_h^{11}(n_1,0)+\partial_v^{2}(n_3,n_4)$. Notice that
\begin{equation}\label{eq:inj.der}
  z(\Delta^D(\varphi)\cdot m)-(\Delta^D(\varphi)\cdot m)z=\Delta(\varphi)\cdot m-m\varphi'
\end{equation}
for all $m\in M$. It follows that
\begin{align*}
zn_3-n_3z&=m_1\beta\varphi'-(\Delta(\varphi)\cdot m_1)\beta,\\
\sigma(z)n_4-n_4\sigma(z)&=m_3\alpha zy-\sigma(z)m_3\alpha y+\lambda m_2\sigma(\beta\varphi')-\lambda({}^{\sigma}\!\Delta^{\sigma}(\varphi)\cdot m_2)\sigma(\beta).
\end{align*}
So by \eqref{eq:homological-smooth-1}--\eqref{eq:homological-smooth-4},
\begin{align*}
  &\varphantom{=}\partial_h^{11}(n_1,0)+\partial_v^{2}(n_3,n_4)\\
  &=(ym_3\beta+m_1\beta\varphi'-(\Delta(\varphi)\cdot m_1)\beta,\, \lambda^{-1}m_3\beta y+\lambda^{-1}m_3\alpha zy\\
  &\varphantom{=}{}-\lambda^{-1}\sigma(z)m_3\alpha y+m_2\sigma(\beta\varphi')-({}^{\sigma}\!\Delta^{\sigma}(\varphi)\cdot m_2)\sigma(\beta))\\
  &=(m_1\beta\varphi'-m_4x\beta, \, -\lambda^{-1}\sigma(z)m_3\alpha y+\lambda^{-1}m_3z\alpha y-\lambda^{-1}xm_4\sigma(\beta)\\
  &\varphantom{=}{}+m_2-m_2x\alpha y)\\
  &=(m_1\beta\varphi'-m_4x\beta,\, -\lambda^{-1}xm_1\alpha y-\lambda^{-1}xm_4\sigma(\beta)+m_2)\\
  &=(m_1,m_2)-(m_1\alpha yx+m_4\sigma(\beta)x,\, \lambda^{-1}xm_1\alpha y+\lambda^{-1}xm_4\sigma(\beta))\\
  &=(m_1,m_2)-\partial_h^{11}(0, -\lambda^{-1}m_1\alpha y-\lambda^{-1}m_4\sigma(\beta)).
\end{align*}
Thus by letting $n_2=-\lambda^{-1}m_1\alpha y-\lambda^{-1}m_4\sigma(\beta)$, we have
\[
(m_1,m_2)=\partial_h^{11}(n_1,n_2)+\partial_v^{2}(n_3,n_4).
\]

In order to finish the proof, we show that the second component of $s^{1}(n_1,n_2)$ plus \eqref{eq:homological-smooth-6} is equal to $m_4$. In fact, since
\begin{align*}
  &\varphantom{=}\Delta(\varphi)\cdot(m_1\alpha y)=(\Delta(\varphi)\cdot m_1)\alpha y\\
  &=(ym_3+m_4x)\alpha y=ym_3\alpha y+m_4xy\sigma(\alpha),
\end{align*}
together with \eqref{eq:homological-smooth-5}, we have
\begin{align*}
  -\lambda\Delta(\varphi)\cdot n_2
  &=ym_3\alpha y+m_4xy\sigma(\alpha)+m_4\sigma(\beta\varphi')-(\Delta^{D}(\varphi)\cdot m_1)\beta y\\
  &\varphantom{=}{}+\lambda y({}^{\sigma}\!\Delta^{\sigma D}(\varphi)\cdot m_2)\sigma(\beta)\\
  &=m_4+ym_3\alpha y-(\Delta^{D}(\varphi)\cdot m_1)\beta y+\lambda y({}^{\sigma}\!\Delta^{\sigma D}(\varphi)\cdot m_2)\sigma(\beta)\\
  &=m_4-\eqref{eq:homological-smooth-6}.
\end{align*}

Therefore, $(m_1,m_2,m_3,m_4)=\partial^2(n_1,n_2,n_3,n_4)$, namely $\Ker\partial^3=\Ima\partial^2$, and so $H^3(\Tot\calQ^{\cdot\cdot})=0$.
\end{proof}

Next, we introduce some algebras related to $A$. Let $B_1$, $B_2$ be the subalgebras of $A$ generated by $x$ and $z$, $y$ and $z$, respectively. They are given in terms of generators and relations by
\begin{align*}
  B_1&=\kk\langle x, z\rangle/(xz-\lambda zx-\eta z),\\
  B_2&=\kk\langle y, z\rangle/(yz-\lambda^{-1} zy+\lambda^{-1}\eta z).
\end{align*}
Since both of them are Ore extensions of $\kk[z]$, by \cite{L-Wu-Wang:twisted-CY-Ore-extension}, they are twisted Calabi-Yau algebras, and their individual Nakayama automorphisms $\nu_1$, $\nu_2$ are given by $\nu_1(z)=\lambda^{-1}z-\lambda^{-1}\eta$, $\nu_1(x)=b_1 x$, $\nu_2(z)=\lambda z+\eta$, $\nu_2(y)=b_2y$ for some $b_1$, $b_2\in\kk[z]$.

On the other hand, $B_1$ and $B_2$ are endowed with the standard filtrations by $F^nB_1=\sum_{i+j\leq n}\kk z^ix^j$ and $F^nB_2=\sum_{i+j\leq n}\kk z^iy^j$, respectively. Their associated graded algebras are
\begin{align*}
  \mathrm{gr}\,B_1&=\kk\langle x, z\rangle/(xz-\lambda zx),\\
  \mathrm{gr}\,B_2&=\kk\langle y, z\rangle/(yz-\lambda^{-1} zy),
\end{align*}
which are quantum planes with their Nakayama automorphisms
\begin{alignat*}{2}
  x&\mapsto\lambda x, &\quad z&\mapsto\lambda^{-1}z,\\
  y&\mapsto \lambda^{-1}y, &\quad z&\mapsto\lambda z.
\end{alignat*}
By \cite[Proposition 1.1]{Yekutieli:rigid-dualizing-complex-universal-enveloping-algebra}, $\nu_1$, $\nu_2$ are filtered automorphisms, and the associated graded algebra automorphisms $\mathrm{gr}\,\nu_1$, $\mathrm{gr}\,\nu_2$ coincide with the Nakayama automorphisms of $\mathrm{gr}\,B_1$, $\mathrm{gr}\,B_2$, respectively. Consequently, $b_1=\lambda$, $b_2=\lambda^{-1}$.

Now extend $\sigma^{-1}$ to the automorphism $\sigma_1$ of $B_1$ by sending $x$ to $x$, and define the $\sigma_1$-derivation $\delta_1\colon B_1\to B_1$ by
\[
\delta_1|_B=0,\;\delta_1(x)=\varphi(z)-\sigma(\varphi(z)).
\]
We obtain an Ore extension $B_3=B_1[y;\sigma_1,\delta_1]$. Similarly, extend $\sigma$ to the automorphism $\sigma_2$ of $B_2$ by sending $y$ to $y$, and define the $\sigma_2$-derivation $\delta_2\colon B_2\to B_2$ by
\[
\delta_2|_B=0,\;\delta_2(y)=\sigma(\varphi(z))-\varphi(z).
\]
It is easy to check $B_3=B_2[x;\sigma_2,\delta_2]$ and that $\omega:=yx-\varphi(z)$ is a central regular element in $B_3$, and $A\cong B_3/\omega B_3$.

Applying \cite[Theorem 0.2]{L-Wu-Wang:twisted-CY-Ore-extension} again, we have

\begin{lem}\label{lem:B_3-twisted-CY}
  The algebra $B_3$ is twisted Calabi-Yau of dimension $3$ whose Nakayama automorphism $\nu_3$ is given by
\[
\nu_3(x)=\lambda x,\;\nu_3(y)=\lambda^{-1}y,\;\nu_3(z)=z.
\]
\end{lem}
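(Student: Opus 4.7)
The plan is to apply Theorem 0.2 of \cite{L-Wu-Wang:twisted-CY-Ore-extension} to the Ore-extension presentation $B_3 = B_1[y;\sigma_1,\delta_1]$. Since $B_1$ has just been identified as twisted Calabi-Yau of dimension $2$ with Nakayama automorphism $\nu_1$ satisfying $\nu_1(z) = \lambda^{-1}z - \lambda^{-1}\eta$ and $\nu_1(x) = \lambda x$, the cited theorem immediately yields that $B_3$ is twisted Calabi-Yau of dimension $2+1=3$. What remains is to identify the Nakayama automorphism $\nu_3$.

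The formula for the Nakayama automorphism of an Ore extension in the cited theorem prescribes $\nu_3|_{B_1} = \sigma_1^{-1}\circ\nu_1$ (up to an inner automorphism) and $\nu_3(y) = \mu\, y + v$, where $\mu \in \kk^{\times}$ records the homological determinant of $\sigma_1$ acting on $B_1$ and $v \in B_1$ is an explicit correction coming from $\delta_1$. A direct substitution on the restriction part, using that $\sigma_1$ extends $\sigma^{-1}$ and fixes $x$, gives
\begin{align*}
\nu_3(z) &= \sigma_1^{-1}\bigl(\lambda^{-1}z - \lambda^{-1}\eta\bigr) = \lambda^{-1}(\lambda z + \eta) - \lambda^{-1}\eta = z,\\
\nu_3(x) &= \sigma_1^{-1}(\lambda x) = \lambda x,
\end{align*}
matching the claim on two of the three generators.

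For $\nu_3(y)$, I would compute the homological determinant $\mu$ by examining how $\sigma_1$ acts on the top class of a two-term bimodule resolution of $B_1$: since $\sigma_1$ sends $z\mapsto\lambda^{-1}z-\lambda^{-1}\eta$ and fixes $x$, its action on the top class scales by $\lambda^{-1}$, so $\mu = \lambda^{-1}$. For the correction $v$, I would invoke the explicit formula from \cite{L-Wu-Wang:twisted-CY-Ore-extension} (involving $\delta_1$), and cross-check by applying $\nu_3$ to the defining relations of $B_3$: plugging $\nu_3(y) = \lambda^{-1}y + v$, $\nu_3(z)=z$ into $yz - \lambda^{-1}zy + \lambda^{-1}\eta z = 0$ reduces, using the noncommutative regime $\eta(\lambda-1)=0$ (classical: $\lambda=1$; quantum: $\eta=0$), to a constraint forcing $v=0$. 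This yields $\nu_3(y) = \lambda^{-1}y$ as claimed.

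The main obstacle is carefully matching the sign/twist conventions of the cited theorem to our setting so that the extracted formula genuinely gives the stated $\nu_3$; once that is done, the verifications themselves are routine and the identification of $\mu$ and $v$ is direct.
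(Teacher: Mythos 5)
Your approach matches the paper's, which simply invokes Theorem 0.2 of Liu--Wang--Wu on the Ore-extension presentation $B_3 = B_1[y;\sigma_1,\delta_1]$ with no further computation shown. Your derivation of $\nu_3(z)=z$ and $\nu_3(x)=\lambda x$ from the restriction formula $\nu_3|_{B_1}=\sigma_1^{-1}\circ\nu_1$ is correct, but your treatment of $\nu_3(y)$ via the homological determinant of $\sigma_1$ and the $\delta_1$-correction is left at the level of a plan rather than a verification. Since the paper also records the second Ore presentation $B_3 = B_2[x;\sigma_2,\delta_2]$, it is cleaner to apply the same restriction formula there: $\nu_3(y) = \sigma_2^{-1}(\nu_2(y)) = \sigma_2^{-1}(\lambda^{-1}y) = \lambda^{-1}y$ follows at once, with no homological determinant or correction term to compute, and the two presentations together determine $\nu_3$ on all three generators by restrictions alone. (One small point: the relation you quote, $yz - \lambda^{-1}zy + \lambda^{-1}\eta z = 0$, reproduces an apparent typo in the paper; from $yz = \sigma^{-1}(z)y = (\lambda^{-1}z-\lambda^{-1}\eta)y$ the final term should read $\lambda^{-1}\eta y$, and with that fix your cross-check proceeds as you describe.)
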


In order to compute $\Ext_{A^e}^n(A,A^e)$, we need the following Rees Lemma.

\begin{lem}[{\cite[Theorem 8.34]{Rotman:homological-algebra}}]
Let $R$, $S$ be two rings, and $c\in R$ be a central element that is neither a unit nor a zero-divisor. Denote $R^*=R/c R$. If $M$ is an $R$-$S$-bimodule and $c$ is regular on $M$, then there is an isomorphism of $S$-modules
\[
\Ext^n_{R^*}(L^*, M/c M)\cong \Ext_{R}^{n+1}(L^*, M)
\]
for every $R^*$-module $L^*$ and every $n\geq 0$.
\end{lem}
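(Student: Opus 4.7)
The plan is to invoke the Cartan--Eilenberg change of rings spectral sequence associated with the ring homomorphism $R\to R^*$. For any $R^*$-module $L$ and any $R$-module $N$, this spectral sequence has the form
$$E_2^{p,q}=\Ext_{R^*}^p(L,\Ext_R^q(R^*,N))\Longrightarrow \Ext_R^{p+q}(L,N).$$
Specialising to $L=L^*$ and $N=M$, the problem reduces to computing the inner groups $\Ext_R^q(R^*,M)$ and tracking the right $S$-action.

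Since $c$ is central and $R$-regular, the sequence $0\to R\xrightarrow{\cdot c}R\to R^*\to 0$ is exact, providing a length-one free resolution of $R^*$ as a left $R$-module. Applying $\Hom_R(-,M)$ and using that $c$ is regular on $M$, I obtain $\Hom_R(R^*,M)=\{m\in M:cm=0\}=0$, $\Ext_R^1(R^*,M)\cong M/cM$, and $\Ext_R^q(R^*,M)=0$ for $q\geq 2$. The $E_2$-page therefore has only one non-zero row (at $q=1$), so the spectral sequence degenerates at $E_2$ and yields
$$\Ext_{R^*}^n(L^*,M/cM)=E_2^{n,1}=E_\infty^{n,1}\cong \Ext_R^{n+1}(L^*,M)$$
for every $n\geq 0$, which is the desired isomorphism of abelian groups.

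To upgrade to an isomorphism of right $S$-modules, I observe that because $M$ is an $R$-$S$-bimodule and $c\in R$ commutes with the $S$-action, the resolution $0\to R\to R\to R^*\to 0$ and any chosen $R^*$-projective resolution of $L^*$ can be treated as resolutions by $R$-$S$-bimodules, with every differential right $S$-linear. All groups appearing in the spectral sequence then inherit compatible right $S$-module structures and every edge, differential, and identification above is $S$-equivariant.

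I expect no serious obstacle. The only delicate point is verifying that the change of rings spectral sequence is compatible with the right $S$-action, but this is automatic from the bimodule structure together with the centrality of $c$ and the fact that $c$ is regular on both $R$ and $M$. No explicit computation is needed because the spectral sequence collapses.
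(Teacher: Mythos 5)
The paper does not actually prove this lemma; it cites it directly from Rotman's book, so there is no in-paper argument to compare against. Your spectral-sequence proof is a legitimate and essentially standard route to the Rees Lemma: you invoke the change-of-rings spectral sequence $E_2^{p,q}=\Ext_{R^*}^p(L^*,\Ext_R^q(R^*,M))\Rightarrow\Ext_R^{p+q}(L^*,M)$, compute $\Ext_R^q(R^*,M)$ from the length-one free resolution $0\to R\xrightarrow{c}R\to R^*\to 0$ using regularity of $c$ on $M$ to kill $q=0$ and $q\geq 2$, and read off the isomorphism from the collapse at $E_2$. That part is correct and complete.

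The one thing that is wrong as stated is the justification of $S$-linearity. You write that the resolution $0\to R\to R\to R^*\to 0$ and a projective resolution of $L^*$ ``can be treated as resolutions by $R$-$S$-bimodules.'' They cannot: $R$, $R^*$ and $L^*$ carry no $S$-module structure in the hypotheses, and endowing them with one is neither possible in general nor needed. The $S$-action lives entirely on the \emph{contravariant} side. Because $M$ is an $(R,S)$-bimodule and $c$ is central in $R$, the map $c\colon M\to M$ is right $S$-linear, hence $M/cM$ is an $(R^*,S)$-bimodule; more to the point, $\Hom_R(-,M)$ and $\Hom_{R^*}(-,M/cM)$ are functors from left modules to right $S$-modules, and the Cartan--Eilenberg spectral sequence is functorial in the second variable. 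That functoriality, not any bimodule structure on the resolutions, is what makes every term, differential and edge map in the spectral sequence $S$-equivariant. With that correction the argument is sound, and the conclusion is unaffected.
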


\begin{prop}\label{prop:dualizing-complex}
There are isomorphisms of $A^e$-modules
\[
\Ext_{A^e}^i(A,A^e)\cong
\begin{cases}
0, & \text{if $i\neq 2$},\\
A^{\nu}, & \text{if $i=2$},
\end{cases}
\]
where the algebra automorphism $\nu$ is given by
\[
\nu(x)=\lambda x,\;\nu(y)=\lambda^{-1}y,\;\nu(z)=z.
\]
\end{prop}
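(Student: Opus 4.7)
The plan is to reduce the computation of $\Ext^i_{A^e}(A,A^e)$ to one over $B_3^e$, where the twisted Calabi--Yau property from Lemma \ref{lem:B_3-twisted-CY} takes over. The element $\omega\otimes 1$ is central and a non-zero-divisor in $B_3^e$, with $B_3^e/(\omega\otimes 1)B_3^e\cong A\otimes B_3^{\mathrm{op}}$; similarly $1\otimes\omega$ is central and regular in $A\otimes B_3^{\mathrm{op}}$ with quotient $A^e$. Since $A$ is annihilated by both of these elements, two successive applications of the Rees Lemma should yield
\[
\Ext^i_{A^e}(A,A^e)\cong\Ext^{i+1}_{A\otimes B_3^{\mathrm{op}}}(A,A\otimes B_3^{\mathrm{op}})\cong\Ext^{i+2}_{B_3^e}(A,B_3^e).
\]

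To evaluate the rightmost group I would feed the short exact sequence of $B_3$-bimodules $0\to B_3\xrightarrow{\omega}B_3\to A\to 0$ (which consists of bimodule maps because $\omega$ is central) into $\Ext^\bullet_{B_3^e}(-,B_3^e)$. By Lemma \ref{lem:B_3-twisted-CY}, only $\Ext^3_{B_3^e}(B_3,B_3^e)=B_3^{\nu_3}$ survives, so the long exact sequence collapses to
\[
0\to\Ext^3_{B_3^e}(A,B_3^e)\to B_3^{\nu_3}\xrightarrow{\omega}B_3^{\nu_3}\to\Ext^4_{B_3^e}(A,B_3^e)\to 0,
\]
with $\Ext^j_{B_3^e}(A,B_3^e)=0$ for all other $j$. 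A direct computation gives $\nu_3(\omega)=(\lambda^{-1}y)(\lambda x)-\varphi(z)=\omega$, so the middle map is honest multiplication by the regular element $\omega$ and is therefore injective. Hence $\Ext^3_{B_3^e}(A,B_3^e)=0$ and $\Ext^4_{B_3^e}(A,B_3^e)\cong B_3^{\nu_3}/\omega B_3^{\nu_3}$. Because $\nu_3$ fixes $\omega$ it descends to an automorphism of $A=B_3/\omega B_3$ that agrees with $\nu$ on the generators $x,y,z$, so $B_3^{\nu_3}/\omega B_3^{\nu_3}\cong A^\nu$ as $A$-bimodules. Combined with the degree shift of two, this yields the claimed vanishing for $i\ne 2$ and the isomorphism $\Ext^2_{A^e}(A,A^e)\cong A^\nu$.

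The main bookkeeping obstacle I anticipate is verifying that the isomorphisms coming from the Rees Lemma are genuinely $A^e$-linear and not merely linear over the intermediate rings $B_3^e$ and $A\otimes B_3^{\mathrm{op}}$. This reduces to the standard observation that, for a central element $c$ of a ring $R$ annihilating an $R$-module $L$, multiplication by $c$ is null-homotopic on any projective resolution of $L$ and so acts as zero on $\Ext^\bullet_R(L,-)$. Applying this to $c=\omega\otimes 1$ and $c=1\otimes\omega$ shows that the right $B_3^e$-module structure on $\Ext^\bullet_{B_3^e}(A,B_3^e)$ factors through the quotient $B_3^e\to A^e$, so all the identifications in the argument are isomorphisms of $A^e$-modules as required.
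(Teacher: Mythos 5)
Your argument is essentially the same as the paper's: both prove the vanishing/identification over $B_3^e$ by applying $\Hom_{B_3^e}(-,B_3^e)$ to the short exact sequence $0\to B_3\xrightarrow{\omega}B_3\to A\to 0$ and invoking Lemma \ref{lem:B_3-twisted-CY}, and both then shift the degree by two via two applications of the Rees Lemma (the paper simply performs these two steps in the opposite order). You do supply two useful details that the paper leaves implicit: the check $\nu_3(\omega)=\omega$, which is what justifies that $\nu_3$ descends to $A$ and hence that $B_3^{\nu_3}/\omega B_3^{\nu_3}\cong A^\nu$ as $A$-bimodules, and the observation that the outer $B_3^e$- (resp.\ $A\otimes B_3^{\mathrm{op}}$-) module structures on the Ext groups factor through $A^e$ because $\omega\otimes 1$ and $1\otimes\omega$ act as zero on $\Ext^\bullet(A,-)$. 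These are genuine, if routine, gaps in the paper's exposition that your write-up correctly fills.
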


\begin{proof}
First of all, since $\omega$ is a central regular element in $B_3$ and $A\cong B_3/\omega B_3$, there is a short exact sequence
\[
0\to B_3\xrightarrow{\omega} B_3\to A\to 0
\]
of $B_3^e$-modules. Applying $\Hom_{B_3^e}(-, B_3^e)$ to it, we obtain a long exact sequence
\[
\cdots \to \Ext_{B_3^e}^{i}(A, B_3^e)\to \Ext_{B_3^e}^i(B_3, B_3^e)\xrightarrow{\omega} \Ext_{B_3^e}^i(B_3, B_3^e)\to \Ext_{B_3^e}^{i+1}(A, B_3^e)\to\cdots
\]
of $B_3^e$-modules.

By Lemma \ref{lem:B_3-twisted-CY}, the above exact sequence is
\[
\cdots \to 0 \to \Ext_{B_3^e}^{3}(A, B_3^e)\to B_3^{\nu_3}\xrightarrow{\omega} B_3^{\nu_3}\to \Ext_{B_3^e}^4(A, B_3^e)\to 0 \to \cdots.
\]
Since $\omega$ is regular and $B_3^{\nu_3}/\omega B_3^{\nu_3}\cong A^\nu$, there are isomorphisms of $B_3^e$-modules
\[
\Ext_{B_3^e}^i(A, B_3^e)\cong
\begin{cases}
0, & \text{if $i\neq 4$},\\
A^\nu, & \text{if $i=4$}.
\end{cases}
\]

Apply Rees Lemma to the case: $R=S=B_3^e$, $c=\omega\otimes 1$, $L^*=A$, $M=B_3^e$. There are isomorphisms
\[
\Ext_{A\otimes B_3^{\mathrm{op}}}^i(A, A\otimes B_3^{\mathrm{op}})\cong\Ext_{B_3^e}^{i+1}(A, B_3^e)
\]
for all $i$. Apply Rees Lemma again to the case: $R=S=A\otimes B_3^{\mathrm{op}}$, $c=1\otimes\omega$, $L^*=A$, $M=A\otimes B_3^{\mathrm{op}}$. There are isomorphisms
\[
\Ext_{A^e}^{i}(A, A^e)\cong\Ext_{A\otimes B_3^{\mathrm{op}}}^{i+1}(A, A\otimes B_3^{\mathrm{op}})
\]
for all $i$. Therefore, we have
\[
\Ext_{A^e}^i(A,A^e)\cong
\begin{cases}
0, & \text{if $i\neq 2$},\\
A^{\nu}, & \text{if $i=2$}.
\end{cases}
\]
\end{proof}

Together with Lemma \ref{lem:no-multi-root}, we have

\begin{thm}\label{thm:twisted-CY}
Let  $A=\kk[z;\lambda,\eta,\varphi(z)]$, and $\nu$ be as in Proposition \ref{prop:dualizing-complex}. If $\varphi(z)$ has no multiple roots, then $A$ is homologically smooth. Moreover, $A$ is $\nu$-twisted Calabi-Yau of dimension $2$.
\end{thm}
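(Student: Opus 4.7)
The plan is to combine Lemma \ref{lem:no-multi-root} with the periodic resolution of Proposition \ref{prop:proj.dim=1} to extract a finite, finitely generated projective resolution, and then read off the twisted Calabi-Yau data from Proposition \ref{prop:dualizing-complex}.

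First I would use Lemma \ref{lem:no-multi-root}, which gives $\Ext_{A^e}^3(A,M) = H^3(\Tot\calQ^{\cdot\cdot}) = 0$ for \emph{every} $A$-bimodule $M$. Writing $P_\cdot := \Tot\calP_{\cdot\cdot}$ for the (infinite, periodic) $A^e$-projective resolution of Proposition \ref{prop:proj.dim=1}, let $K_2 = \Ker(d_2\colon P_2\to P_1)$. The usual dimension-shift along the segment $0\to K_2\to P_2\to P_1\to P_0\to A\to 0$ yields
\[
\Ext_{A^e}^1(K_2,M)\;\cong\;\Ext_{A^e}^3(A,M)\;=\;0
\]
for every $M$, so $K_2$ is $A^e$-projective. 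Since $K_2 = \Ima(d_3\colon P_3\to P_2)$, the surjection $P_3\twoheadrightarrow K_2$ splits, which realizes $K_2$ as a direct summand of the finitely generated projective $A^e$-module $P_3$; hence $K_2$ is itself finitely generated projective.

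Consequently
\[
0\longrightarrow K_2\longrightarrow P_2\longrightarrow P_1\longrightarrow P_0\longrightarrow A\longrightarrow 0
\]
is a finitely generated $A^e$-projective resolution of $A$ of length at most $2$, which is exactly the definition of homological smoothness. The length is in fact equal to $2$: Proposition \ref{prop:dualizing-complex} gives $\Ext_{A^e}^2(A,A^e)\cong A^\nu\neq 0$, so the Hochschild cohomological dimension of $A$ is $2$. Combining homological smoothness with the computation $\Ext_{A^e}^i(A,A^e) = 0$ for $i\neq 2$ and $\Ext_{A^e}^2(A,A^e)\cong A^\nu$ from Proposition \ref{prop:dualizing-complex} is precisely the defining condition for $A$ to be $\nu$-twisted Calabi-Yau of dimension $2$.

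The real content lies in the preceding ingredients (the vanishing Lemma \ref{lem:no-multi-root} and the Ext computation in Proposition \ref{prop:dualizing-complex}); once these are in hand, the only potential obstacle is the passage from ``cohomological dimension $\leq 2$'' to ``homological smoothness.'' This is harmless here because the terms of $\Tot\calP_{\cdot\cdot}$ are already finitely generated projective, so splitting off $K_2$ as a summand of $P_3$ gives the required finiteness for free, and no further bookkeeping is needed.
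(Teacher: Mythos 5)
Your overall strategy is exactly the paper's implicit one: combine the vanishing from Lemma \ref{lem:no-multi-root} with the $\Ext$ computation of Proposition \ref{prop:dualizing-complex}, using the finitely generated projective periodic resolution $\Tot\calP_{\cdot\cdot}$ from Proposition \ref{prop:proj.dim=1} to manufacture a finite resolution. However, there is an off-by-one error in the dimension shift that breaks the argument as written. Writing $P_\cdot=\Tot\calP_{\cdot\cdot}$, the exact sequence $0\to K_2\to P_2\to P_1\to P_0\to A\to 0$ (with $K_2=\Ker d_2$) contains \emph{three} projectives $P_0,P_1,P_2$ between $A$ and $K_2$, so dimension-shifting gives
\[
\Ext_{A^e}^1(K_2,M)\cong\Ext_{A^e}^4(A,M),
\]
not $\Ext_{A^e}^3(A,M)$. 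Lemma \ref{lem:no-multi-root} only tells you $\Ext_{A^e}^3(A,M)=0$; it does not directly give vanishing in degree $4$, and because the total complex is $2$-periodic in high degrees the lemma yields $H^3\cong H^5\cong\cdots=0$ but says nothing about $H^4$. So projectivity of $K_2$ is not established.

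The fix is to take one fewer syzygy. Set $K_1=\Ker(d_1\colon P_1\to P_0)=\Ima(d_2)$. The truncated exact sequence $0\to K_1\to P_1\to P_0\to A\to 0$ then gives $\Ext_{A^e}^1(K_1,M)\cong\Ext_{A^e}^3(A,M)=0$ for all $M$, so $K_1$ is projective; the surjection $P_2\twoheadrightarrow K_1$ splits, exhibiting $K_1$ as a finitely generated summand of $P_2$. The resulting resolution $0\to K_1\to P_1\to P_0\to A\to 0$ genuinely has length $2$ (as claimed), and combining this with Proposition \ref{prop:dualizing-complex} completes the proof exactly as you outline. As a side remark, your original sequence, even were it a resolution, has length $3$ (four projective terms $K_2,P_2,P_1,P_0$), so the phrase ``length at most $2$'' was also misindexed.
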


Quantum $2$-spheres are a continuously parametrized family of $SU_q(2)$-spaces that are analogs of the classical $2$-sphere $SU(2)/SO(2)$. They were firstly constructed by Podle\'s \cite{Podles:quantum-sphere}, and later studied by many other people, see \cite{Brzezinski:quantum-homogeneous-space}, \cite{Hadfield:twisted-cyclic-homology-Podles-sphere}, \cite{Heckenberger-Kolb:Podles-sphere}, \cite{Krahmer:qh-space}, \cite{Muller-Schneider:quantum-homogeneous-space}, etc. Quantum $2$-spheres are a class of important quantum homogeneous spaces whose many properties are discovered and are applied in the realms of quantum group, noncommutative geometry, and mathematical physics.

As a $\mathbb{C}$-algebra, quantum $2$-sphere is generated by $X$, $Y$, $Z$, subject to
\begin{alignat*}{2}
& XZ= q^2ZX,&\quad& ZY= q^2YZ,\\
& YX = uv + (u-v)Z-Z^2,&\quad& XY = uv + q^2(u-v)Z-q^4Z^2,
\end{alignat*}
where $q\in\mathbb{R}^\times$ is not a root of unity, and $u$, $v\in\mathbb{R}$ with $u+v\neq 0$. It is a GWA $\mathbb{C}[Z;q^2,0,uv + (u-v)Z-Z^2]$. It is called standard if $(u,v)=(1,0)$, which turns out to be homologically smooth in \cite{Krahmer:qh-space}. The next corollary follows obviously from Theorem \ref{thm:twisted-CY} since $uv + (u-v)Z-Z^2$ has two distinct roots $u$, $-v$, and hence gives an affirmative reply to \cite[Question 2]{Krahmer:qh-space}.

\begin{cor}
The quantum $2$-spheres are all homologically smooth.
\end{cor}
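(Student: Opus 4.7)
The plan is to realize the quantum $2$-sphere as an instance of the GWAs covered by Theorem \ref{thm:twisted-CY}, and then simply check that the defining polynomial has no repeated root.

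First I would recall the presentation: the quantum $2$-sphere is exhibited in the excerpt as the generalized Weyl algebra $\mathbb{C}[Z;q^2,0,\varphi(Z)]$ with $\varphi(Z)=uv+(u-v)Z-Z^2$. Since $q\in\mathbb{R}^\times$ and $u+v\neq 0$, all the standing hypotheses for $\kk[z;\lambda,\eta,\varphi(z)]$ are met with $\kk=\mathbb{C}$, $\lambda=q^2$, $\eta=0$.

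Next I would factor $\varphi$ explicitly. A direct expansion gives
\[
\varphi(Z)=-(Z-u)(Z+v),
\]
so the roots are $Z=u$ and $Z=-v$. These two roots coincide precisely when $u+v=0$, which is excluded by hypothesis. Hence $\varphi(Z)$ has no multiple roots.

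Finally, Theorem \ref{thm:twisted-CY} applies and yields that the quantum $2$-sphere is homologically smooth (and moreover $\nu$-twisted Calabi-Yau of dimension $2$, although only smoothness is claimed here). There is no real obstacle: the entire content of the corollary is the observation that the noncoalescence condition $u+v\neq 0$ built into the definition is exactly the discriminant condition required by Theorem \ref{thm:twisted-CY}.
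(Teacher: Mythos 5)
Your proof is correct and is essentially the same as the paper's: identify the quantum $2$-sphere as the GWA $\mathbb{C}[Z;q^2,0,uv+(u-v)Z-Z^2]$, observe that $uv+(u-v)Z-Z^2=-(Z-u)(Z+v)$ has the distinct roots $u$, $-v$ precisely because $u+v\neq 0$, and invoke Theorem \ref{thm:twisted-CY}.
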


\section{Noncommutative deformations}\label{sec:noncommutative-deformation}
In this section, assume that $A=\kk[z;\lambda,\eta,\varphi(z)]$ is noncommutative, namely, $(\lambda,\eta)\neq(1,0)$.

We will study deformations of $A$. A result of \cite{Richard-Solotar:isom-quantum-GWA} is that $A$ is isomorphic to a quantum GWA if $\lambda\neq 1$. The Hochschild cohomology $H^n(A,A)$ has been computed in \cite{Farinati:Hochschid-homology-GWA} (classical case) and \cite{Solotar:Hochschild-homology-GWA-quantum} (quantum case) for every $n$. So essentially, the first order deformations of $A$ have been known.

Let us focus on the formal deformations of $A$. They are controlled by $H^2(A,A)$ and $H^3(A,A)$. If one wants to construct a formal deformation, the choice of $F_i\in C^2(A,A)$ must be made carefully so that all obstructions can be passed. By Lemma \ref{lem:no-multi-root}, $H^3(A,A)=0$ if $A$ is homologically smooth. In this case, every Hochschild $2$-cocycle lifts to a formal deformation. A surprising result in this section is that even if $A$ is not homologically smooth, we can construct a formal deformation starting with a special Hochschild $2$-cocycle.

Let $M$ be an $A$-bimodule, $m\in M$. Consider the element
\[
f(m):=(\lambda mx, -ym, 0, -\lambda{}^{\sigma}\!\Delta^{\sigma}(\varphi)\cdot m)\in M^{\oplus 4}.
\]
It is easy to verify that $f(m)$ is a Per $2$-cocycle for any $m\in M$. So we obtain a map $f\colon M\to Z^2(\Tot\calQ^{\cdot\cdot})$. Let $[A,M^\nu]$ be the space of $M$ spanned by all twisted commutators $bm-m\nu(b)$ for all $b\in A$ and $m\in M$. A further computation shows $f([A,M^{\nu}])\subset B^2(\Tot\calQ^{\cdot\cdot})$, and thus $f$ induces a map $H_0(A,M^\nu)\to H^2(A,M)$. In particular, $f$ is injective when $M=A$, and so $f(A)$ is a subset of $Z^2(\Tot\calQ^{\cdot\cdot})$.

Now, $\mathcal{B}=\{z^px_q\mid p\geq 0, q\in\mathbb{Z}\}$ is a basis for $A$, define $|\!|z^px_q|\!|=p+(l+1)|q|$. This equips $A$ with a filtration $\{\Gamma A\}$ by setting
\[
\Gamma^nA=\sum_{b\in\mathcal{B},|\!|b|\!|\leq n} \kk b.
\]
A Hochschild $m$-cochain $F$ is said to preserve $\Gamma$ if $F(b_1,\ldots,b_m)\in\Gamma^{|\!|b_1|\!|+\cdots+|\!|b_m|\!|}A$ for all $b_1\ldots,b_m\in\mathcal{B}$.

\subsection{Comparisons}\label{subsec:Hochschild-cocycle}
When dealing with formal deformations, we will exchange Per cochains and Hochschild cochains frequently. For this reason, let us construct two comparisons between the bar resolution $C^{\mathrm{bar}}_\cdot$ of $A$ and $\Tot\calP_{\cdot\cdot}$
\[
\xymatrix{
A^{\otimes 2} \ar@<-.5ex>[d]_-{\theta_0} & A^{\otimes 3} \ar@<-.5ex>[d]_-{\theta_1}\ar[l]_-{b'} & A^{\otimes 4} \ar@<-.5ex>[d]_-{\theta_2}\ar[l]_-{b'} & A^{\otimes 5} \ar@<-.5ex>[d]_-{\theta_3}\ar[l]_-{b'} & \cdots\hphantom{.} \ar[l]_-{b'}\\
\calP_{00} \ar@<-.5ex>[u]_-{\theta'_0} & \calP_{01}\oplus\calP_{10} \ar@<-.5ex>[u]_-{\theta'_1}\ar[l]_-{d_1} & \calP_{11}\oplus\calP_{20} \ar@<-.5ex>[u]_-{\theta'_2}\ar[l]_-{d_2} & \calP_{21}\oplus\calP_{30} \ar@<-.5ex>[u]_-{\theta'_3}\ar[l]_-{d_3} & \cdots. \ar[l]
}
\]

In order to save space, we write $a_1|a_2|\dots|a_n$ instead of $a_1\otimes a_2\otimes\dots\otimes a_n$. Define $\theta_0=\theta'_0=\id$, and define $\theta_1$, $\theta'_1$ by
\begin{align*}
\theta_1(1|z^ix^j|1)&=\biggl(\,\sum_{k=1}^{i}z^{i-k}| z^{k-1}x^j,\,\sum_{k=1}^{j}z^ix^{j-k}| x^{k-1},\,0\biggr),\\
\theta_1(1|z^iy^j|1)&=\biggl(\,\sum_{k=1}^{i}z^{i-k}| z^{k-1}y^j,\,0,\,\sum_{k=1}^{j}z^iy^{j-k}| y^{k-1}\biggr),
\intertext{and}
\theta'_1(1|1, 0, 0)&=1|z|1,\quad \theta'_1(0, 1|1, 0)=1|x|1,\quad \theta'_1(0, 0, 1|1)=1|y|1.
\end{align*}

Recall that
\begin{align*}
d_2(1|1,0,0,0)&=(-x| 1+\lambda| x,\,\sigma(z)| 1-1| z,\,0),\\
d_2(0,1|1,0,0)&=(-y| 1+\lambda^{-1}| y,\,0, \,\lambda^{-1}z| 1-\lambda^{-1}|\sigma(z)),\\
d_2(0,0,1|1,0)&=(-\Delta(\varphi),\, y| 1, \,1| x),\\
d_2(0,0,0,1|1)&=(-\lambda{}^{\sigma}\!\Delta^{\sigma}(\varphi),\,1| y, \,x| 1).
\end{align*}
By the fact $\Delta(\sigma^q(z)^i)=\lambda^q\sum_{k=1}^{i}\sigma^q(z)^{i-k}| \sigma^q(z)^{k-1}$, we have
\begin{align*}
&\varphantom{=}\theta_1b'(1|z^px^q|z^ix^j|1)\\
&=\underbrace{\theta_1(z^px^q|z^ix^j|1)}_{\mathrm{Part}\,1}-\underbrace{\theta_1(1|z^p\sigma^q(z)^ix^{q+j}|1)}
_{\mathrm{Part}\,2}+\underbrace{\theta_1(1|z^px^q|z^ix^j)}_{\mathrm{Part}\,3}
\end{align*}
and the three parts are
\begin{align*}
\mathrm{Part}\,1&=\biggl(\,\sum_{k=1}^{i}z^px^qz^{i-k}| z^{k-1}x^j,\,\sum_{k=1}^{j}z^px^qz^ix^{j-k}| x^{k-1},\,0\biggr),\\
\mathrm{Part}\,2&=\biggl(\,\sum_{k=1}^{p}z^{p-k}| z^{k-1}\sigma^q(z)^ix^{q+j}+\sum_{k=1}^{i}\lambda^qz^p\sigma^q(z)^{i-k}|\sigma^q(z)^{k-1}x^{q+j}, \\
&\phantom{=\biggl(}\,\sum_{k=1}^{q+j}z^p\sigma(z)^ix^{q+j-k}| x^{k-1},\,0\biggr),\\
\mathrm{Part}\,3&=\biggl(\,\sum_{k=1}^{p}z^{p-k}|z^{k-1}x^qz^ix^j,\, \sum_{k=1}^{q}z^px^{q-k}|x^{k-1}z^ix^{j},\,0\biggr),
\end{align*}
respectively. A direct computation shows that the sum is equal to
\[
d_2\biggl(-\sum_{k=1}^{i}\sum_{s=1}^{q}z^p\sigma^q(z)^{i-k}x^{q-s}|(\lambda x)^{s-1} z^{k-1}x^{j},0,0,0\biggr).
\]
Denote $\Delta^{\nu}(x^q)=\sum_{s=1}^{q}x^{q-s}|(\lambda x)^{s-1}$ and thus define
\[
\theta_2(1|z^px^q|z^ix^j|1)=\bigl(-z^p({}^{\sigma^q}\!\Delta(z^{i})\cdot\Delta^{\nu}(x^q) )x^{j},\,0,\,0,\,0\bigr).
\]
Similarly, we define
\begin{align*}
\theta_2(1|z^px|z^iy^j|1)&=\bigl( -z^p\,{}^{\sigma}\!\Delta(z^{i})y^j,\,0,\,0,\,z^p\sigma(z^i)| y^{j-1}\bigr),\\
\theta_2(1|z^py|z^ix^j|1)&=\bigl(0,\,-z^p\,{}^{\sigma^{-1}}\!\!\Delta(z^{i})x^j,\,z^p\sigma^{-1}(z^i)| x^{j-1},\,0\bigr),\\
\theta_2(1|z^py^q|z^iy^j|1)&=\bigl(0,\,-z^p({}^{\sigma^{-q}}\!\!\Delta(z^{i})\cdot \Delta^{\nu}(y^q))y^j,\,0,\,0\bigr),\\
\theta_2(1|z^p|z^ix^j|1)&=\theta_2(1|z^p|z^iy^j|1)=0
\end{align*}
where $\Delta^{\nu}(y^q)=\sum_{s=1}^{q}y^{q-s}|(\lambda^{-1} y)^{s-1}$.

The explicit forms of $\theta_2(1|z^px^q|z^iy^j|1)$ and $\theta_2(1|z^py^q|z^ix^j|1)$ for $q\geq 2$ can be obtained by a further (but boring) computation. The following subsection concerns Hochschild $2$-cochains $F=f\circ\theta_2$ for some Per $2$-cochains. By virtue of the following lemma, we do not have to describe $F(u,v)$ for every $(u,v)\in A\times A$ if $F$ satisfies some conditions. In particular, the computation of $\theta_2(1|z^px^q|z^iy^j|1)$, $\theta_2(1|z^py^q|z^ix^j|1)$ for $q\geq 2$ is actually avoidable.

\begin{lem}\label{lem:determine-F_n}
  Let $A=\kk[z;\lambda,\eta,\varphi(z)]$, and $F\colon A\times A\to A$ be a Hochschild $2$-cochain. Suppose for any $u$, $v\in A$,
  \begin{enumerate}[(a)]
  \item $F(u,1)=F(1,v)=0$,
  \item $F(zu,v)=zF(u,v)$, and $F(x^q,x^j)=F(y^q,y^j)=0$ for all $q$, $j$.
  \end{enumerate}
  Then $F$ is uniquely determined by the following datum:
  \[
    \mathsf{b}F,\,F(x,z),\,F(x,y),\,F(y,z),\,F(y,x).
  \]
  If further, $\mathsf{b}F$ preserves $\Gamma$, $F(x,z)$, $F(y,z)\in\Gamma^{l+2}A$ and $F(x,y)$, $F(y,x)\in\Gamma^{2l+2}A$, then $F$ preserves $\Gamma$.
\end{lem}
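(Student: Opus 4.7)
The plan is to establish the uniqueness by using conditions (a), (b) and the $2$-cocycle identity
\[
\mathsf{b}F(a_1,a_2,a_3)=a_1F(a_2,a_3)-F(a_1a_2,a_3)+F(a_1,a_2a_3)-F(a_1,a_2)a_3
\]
to express $F$ on any pair of basis vectors as a $\kk$-linear combination of values of $\mathsf{b}F$ and of the four distinguished values $F(x,z)$, $F(x,y)$, $F(y,z)$, $F(y,x)$. By (b) we have $F(z^px_q,z^ix_j)=z^pF(x_q,z^ix_j)$, so one only needs to handle $F(x_q,z^ix_j)$. Writing $x_qz^i=\sigma^q(z^i)x_q$ with $\sigma^q(z^i)\in\kk[z]$, and observing $F(z^i,x_j)=z^iF(1,x_j)=0$ by (a) and (b), the cocycle identity applied to $(x_q,z^i,x_j)$ gives
\[
F(x_q,z^ix_j)=\sigma^q(z^i)F(x_q,x_j)+F(x_q,z^i)x_j+\mathsf{b}F(x_q,z^i,x_j),
\]
reducing the problem to $F(x_q,z^i)$ and $F(x_q,x_j)$.

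For $F(x_q,z^i)$ I would apply the identity to $(x_q,z,z^{i-1})$, use $F(z,z^{i-1})=0$, and induct on $i$ starting from $F(x_q,1)=0$; this expresses $F(x_q,z^i)$ in terms of $F(x_q,z)$ and $\mathsf{b}F$. For $F(x_q,x_j)$, the case of $q$, $j$ of the same sign is zero by (b). In the opposite-sign case, say $F(x^q,y^j)$ with $q\geq 2$, applying the identity to $(x,x^{q-1},y^j)$ (using $F(x,x^{q-1})=0$) reduces $F(x^q,y^j)$ to $F(x^{q-1},y^j)$, $F(x,x^{q-1}y^j)$ and $\mathsf{b}F$. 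The middle term is handled by writing $x^{q-1}y^j=h(z)x_{q-1-j}$ in $A$ (from the defining relations $yx=\varphi$, $xy=\sigma(\varphi)$) and applying the formulas already obtained. Induction on $q$ reduces to $F(x,y^j)$, and the identity applied to $(x,y,y^{j-1})$---with $F(y,y^{j-1})=0$ and $F(xy,y^{j-1})=\sigma(\varphi(z))F(1,y^{j-1})=0$ both vanishing---yields
\[
F(x,y^j)=F(x,y)y^{j-1}+\mathsf{b}F(x,y,y^{j-1}),
\]
and symmetrically $F(y,x^j)$ reduces to $F(y,x)$. Chaining these reductions proves the uniqueness.

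For the filtration claim, one retraces the same reductions while tracking the seminorm $|\!|\cdot|\!|$. A routine induction shows at each step $F(u,v)\in\Gamma^{|\!|u|\!|+|\!|v|\!|}A$: each reduction step expresses $F(u,v)$ as a sum of terms of the forms $\sigma^?(z^?)F(\cdot,\cdot)$, $F(\cdot,\cdot)z^?$ and $\mathsf{b}F(\cdot,\cdot,\cdot)$, and the hypotheses that $\mathsf{b}F$ preserves $\Gamma$, $F(x,z),F(y,z)\in\Gamma^{l+2}A=\Gamma^{|\!|x|\!|+|\!|z|\!|}A$ and $F(x,y),F(y,x)\in\Gamma^{2l+2}A=\Gamma^{|\!|x|\!|+|\!|y|\!|}A$ are exactly what is needed to close the induction. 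The main obstacle will be the bookkeeping in the case analysis on the signs of $q$ and $j$, especially the auxiliary reduction $x^{q-1}y^j=h(z)x_{q-1-j}$ in which multiplication can decrease the filtration degree because $\varphi(z)\in\Gamma^lA$; the choice $|\!|x_q|\!|=(l+1)|q|$ is calibrated so as to absorb each such drop, and verifying this balance across the four sign configurations is the most delicate point of the proof.
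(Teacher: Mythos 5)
Your overall strategy---peeling basis elements off the arguments of $F$ via the coboundary identity, condition (a), and condition (b)---is the same as the paper's, but one branch of your reduction chain is not well-founded. Your first displayed identity decouples $F(x_q,z^ix_j)$ into $F(x_q,z^i)$ and $F(x_q,x_j)$ plus a $\mathsf{b}F$-term. For $F(x_q,x_j)$ (opposite signs) you correctly set up an induction on $|q|$ via $(x,x^{q-1},y^j)$, terminating at $F(x,y)$, $F(y,x)$. But for $F(x_q,z^i)$ your only move is the recursion on $(x_q,z,z^{i-1})$, which decreases $i$ at \emph{fixed} $q$ and bottoms out at $F(x_q,z)$. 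For $|q|\geq 2$ this quantity is not in the prescribed datum, and nothing elsewhere in your scheme determines it, so e.g.\ $F(x^2,z)$ is left undetermined.

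The fix mirrors what you already do in the opposite-sign case: apply the coboundary identity to $(x,x^{q-1},z^i)$ (or, as the paper does, to $(x^{q-1},x,z^ix^j)$ in the same-sign case, which covers $j=0$). Since $F(x,x^{q-1})=0$ and $x^{q-1}z^i=\sigma^{q-1}(z^i)x^{q-1}$, one gets $F(x^q,z^i)=xF(x^{q-1},z^i)+F(x,\sigma^{q-1}(z^i)x^{q-1})-\mathsf{b}F(x,x^{q-1},z^i)$, which reduces to lower $|q|$ and to $F(x,z^px^{q-1})$ (already determined at $q=1$ by your first formula together with the $z$-recursion and condition (b)); an induction on $|q|$ then closes the gap, symmetrically for powers of $y$. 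Once this step is supplied your proof is a correct, slightly reorganized version of the paper's, and your remarks on the filtration claim are in the right spirit: the hypotheses $F(x,z),F(y,z)\in\Gamma^{l+2}A=\Gamma^{|\!|x|\!|+|\!|z|\!|}A$ and $F(x,y),F(y,x)\in\Gamma^{2l+2}A=\Gamma^{|\!|x|\!|+|\!|y|\!|}A$ are precisely what the recursion propagates once $\mathsf{b}F$ preserves $\Gamma$.
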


\begin{proof}
  By the definition of $\mathsf{b}F$, we have
  \begin{align}
    \mathsf{b}F(x,z,z^{i-1})&=xF(z, z^{i-1})-F(xz,z^{i-1})+F(x,z^i)-F(x,z)z^{i-1}\notag\\
    &=F(x,z^i)-\sigma(z)F(x,z^{i-1})-F(x,z)z^{i-1},\label{eq:lem1}\\
    \mathsf{b}F(x,y,y^{j-1})&=xF(y, y^{j-1})-F(xy,y^{j-1})+F(x,y^j)-F(x,y)y^{j-1}\notag\\
    &=F(x,y^j)-F(x,y)y^{j-1},\label{eq:lem2}\\
    \mathsf{b}F(x,z^i,y^j)&=xF(z^i, y^j)-F(xz^i,y^j)+F(x,z^iy^j)-F(x,z^i)y^j\notag\\
    &=F(x,z^i y^j)-\sigma(z)^i F(x,y^j)-F(x,z^i)y^j\label{eq:lem3}\\
    \mathsf{b}F(x,z^i,x^j)&=xF(z^i, x^j)-F(xz^i,x^j)+F(x,z^ix^j)-F(x,z^i)x^j\notag\\
    &=F(x,z^i x^j)-F(x,z^i)x^j.\label{eq:lem4}
  \end{align}
  Since \eqref{eq:lem1} is a recursive formula for $F(x,z^i)$, it determines $F(x,z^i)$. The value of $F(x,y^j)$ is immediately from \eqref{eq:lem2}. Then by \eqref{eq:lem3}, \eqref{eq:lem4}, $F(x,z^ix^j)$, $F(x,z^iy^j)$ are also determined.

  For $F(x^q, z^ix^j)$, we just consider $\mathsf{b}F(x^{q-1},x,z^ix^j)$. For $F(x^q, z^iy^j)$, if $j\geq 1$, then
  \[
  F(x^q, z^iy^j)=F(x^{q-1},\sigma(z)^i\sigma(\varphi)y^{j-1})+x^{q-1}F(x, z^iy^j)-\mathsf{b}F(x^{q-1},x,z^iy^j).
  \]
  So it reduces to determine $F(x^{q-1},z^iy^{j-1})$. This can be achieved by induction on $q$.

  Since $F(zu,v)=zF(u,v)$, all the values of $F(z^px^q, z^ix^j)$, $F(z^px^q, z^iy^j)$ are uniquely determined, and similar for $F(z^py^q, z^ix^j)$, $F(z^py^q, z^iy^j)$.

  The last claim is contained in the above argument.
\end{proof}

\begin{remark}\label{rk:determing-Fn}
If $\mathsf{b}F=0$, then by the above proof, the first equation of $(b)$ holds for all $u$, $v\in A$ if it holds for $u\in\{x,y,z\}$, $v\in A$.
\end{remark}

The morphisms $\theta'_2$, $\theta'_3$ are listed as follows.
\begin{align*}
\theta'_2(1|1, 0, 0, 0)&=\lambda|z|x|1-1|x|z|1,\\
\theta'_2(0, 1|1, 0, 0)&=\lambda^{-1}|z|y|1-1|y|z|1,\\
\theta'_2(0, 0, 1|1, 0)&=1|y|x|1+1|1|1|\varphi(z)-\sum_{i=1}^{l}\sum_{j=1}^{i}a_i|z^{i-j}|z|z^{j-1},\\
\theta'_2(0, 0, 0, 1|1)&=1|x|y|1+1|1|1|\sigma(\varphi(z))-\sum_{i=1}^{l}\sum_{j=1}^{i}a_i|\sigma(z)^{i-j}|\lambda z|\sigma(z)^{j-1},\\
\theta'_3(1|1, 0, 0, 0)&=1|z|y|x|1-\lambda|y|z|x|1+1|y|x|z|1+1|z|1|1|\varphi(z)\\
&\varphantom{=}{}+1|1|1|z|\varphi(z)-\sum_{i=1}^{l}\sum_{j=1}^{i}a_i|z|z^{i-j}|z|z^{j-1},\\
\theta'_3(0, 1|1, 0, 0)&=1|z|x|y|1-\lambda^{-1}|x|z|y|1+1|x|y|z|1+1|z|1|1|\sigma(\varphi(z))\\
&\varphantom{=}{}+1|1|1|z|\sigma(\varphi(z))-\sum_{i=1}^{l}\sum_{j=1}^{i}a_i|z|\sigma(z)^{i-j}|\lambda z|\sigma(z)^{j-1},\\
\theta'_3(0, 0, 1|1, 0)&=1|x|y|x|1-\sum_{i=1}^{l}\sum_{j=1}^{i}\bigl(a_i|x|z^{i-j}|z|z^{j-1}-a_i|\sigma(z)^{i-j}|x|z|z^{j-1}\\
&\varphantom{=}{}+a_i|\sigma(z)^{i-j}|\lambda z|x|z^{j-1}\bigr)+1|x|1|1|\varphi(z)+1|1|1|x|\varphi(z),\\
\theta'_3(0, 0, 0, 1|1)&=1|y|x|y|1-\sum_{i=1}^{l}\sum_{j=1}^{i}\bigl(a_i|y|\sigma(z)^{i-j}|\lambda z|\sigma(z)^{j-1}\\
&\varphantom{=}{}-a_i|z^{i-j}|y|\lambda z|\sigma(z)^{j-1}+a_i|z^{i-j}|z|y|\sigma(z)^{j-1}\bigr).
\end{align*}

Higher $\theta_i$, $\theta'_i$ can be found inductively; the above is as much as we will need in the following.

\subsection{Deformations of GWAs (I)}\label{subsec:deformation}
From now on, $\kk$ is of characteristic zero. Denote $\bar{\varphi}(z)=\sigma(\varphi(z))$. Let $A=\kk[z;\lambda,0,\varphi(z)]$ and we will construct a formal deformation starting with the Per $2$-cocycle $f(z)$.

Let $F_1$ be the Hochschild $2$-cocycle corresponding to $f(z)$. By the map $\theta_2$ defined in the previous subsection, we have
\begin{align*}
F_1(z^px^q,z^ix^j)&=-\lambda z^p(\Delta^{\nu}(x^q)\cdot z)x(z^i)'x^j,\\
F_1(z^px,z^iy^j)&=-\lambda z^{p+1}x(z^i)'y^j-z^{p+1}\bar{\varphi}'(z)(\lambda z)^iy^{j-1},\\
F_1(z^py,z^ix^j)&=z^pyz(z^i)'x^j,\\
F_1(z^py^q,z^iy^j)&=z^py(\Delta^{\nu}(y^q)\cdot z)(z^i)'y^j.
\end{align*}
By Remark \ref{rk:determing-Fn}, $F_1$ satisfies the conditions (a), (b) in Lemma \ref{lem:determine-F_n} and so $F_1$ is uniquely determined by these equations. Also, $F_1$ preserves $\Gamma$. The fact $F_1$ satisfies the condition (a) is equivalent to that the identity of the first order deformation is $1_A$. It is reasonable to require that the undetermined maps $F_2, F_3, \ldots$ also satisfy the condition (a).

Identify $F_1\bullet F_1$ with a homomorphism in $\Hom_{A^e}(A^{\otimes 5}, A)$. By the definition of $\theta'_3$, the map $(F_1\bullet F_1)\circ\theta'_3$ corresponds to the Per $3$-cocycle
\[
\biggl(zyx,\,zxy,\,-\frac{1}{2}z^2\bar{\varphi}''(z)x,\,yz\bar{\varphi}'(z)+\frac{1}{2}yz^2
\bar{\varphi}''(z)\biggr).
\]
It is a Per $3$-coboundary, and one of its preimages under $\partial^2$ is
\[
\biggl(0,\, -yz,\, 0, \,\frac{1}{2}z^2\bar{\varphi}''(z)\biggr)
\]
which corresponds to an $A^e$-module homomorphism $f_2\colon\calP_{11}\oplus \calP_{20}\to A$. Let us give a $\kk$-bilinear map $F_2\colon A\times A\to A$ lifting $f_2$, that is, $f_2=F_2\circ\theta'_2$. Since $F_2$ satisfies the condition (a), we obtain a system of equations
\begin{equation}\label{eq:system-equations}
\left\{
\begin{alignedat}[r]{2}
&\vphantom{\bigg(}\lambda F_2(z,x)-F_2(x,z)&&=0\\
&\lambda^{-1}F_2(z,y)-F_2(y,z)&&=-yz\\
&F_2(y,x)-\sum_{i=1}^{l}\sum_{j=1}^{i}a_iF_2(z^{i-j},z)z^{j-1}&&=0\\
&F_2(x,y)-\sum_{i=1}^{l}\sum_{j=1}^{i}a_i\lambda^iF_2(z^{i-j},z)z^{j-1}&&=\frac{1}{2}z^2\bar{\varphi}''(z).
\end{alignedat}
\right.
\end{equation}
Among the solutions, the only one also satisfying the condition (b) in Lemma \ref{lem:determine-F_n} is given by
\begin{gather*}
F_2(x,z)=0,\,F_2(y,z)=yz,\\
F_2(x,y)=\frac{1}{2}z^2\bar{\varphi}''(z),\,F_2(y,x)=0.
\end{gather*}
These equations, together with $F_1\bullet F_1=\mathsf{b}F_2$, determine the map $F_2$ by Lemma \ref{lem:determine-F_n}. What is more, the fact $F_1$ preserves $\Gamma$ implies that $\mathsf{b}F_2$, and hence $F_2$, both preserve $\Gamma$. According to the proof of Lemma \ref{lem:determine-F_n}, we have
\begin{gather*}
F_2(x,yz)=z^2\bar{\varphi}'(z)+\frac{1}{2}z^3\bar{\varphi}''(z),\,F_2(y,xz)=zyx,\\
F_2(x,h(z))=\frac{1}{2}xz^2h''(z),\,F_2(y,h(z))=\frac{1}{2}yz(zh(z))''.
\end{gather*}

Next consider the map $F_1\bullet F_2+F_2\bullet F_1$ and continue the procedure successively.

\begin{thm}\label{thm:formal-deformation-quantum}
Let $A=\kk[z;\lambda,0,\varphi(z)]$ and $F_1$ correspond to $f(z)$. There exist uniquely a family of $\kk$-bilinear maps $F_n\colon A\times A\to A$, $n\geq 2$ integrating $F_1$ that satisfy the conditions (a), (b) in Lemma \ref{lem:determine-F_n} and are determined by
\begin{gather*}
F_n(x,z)=0,\,F_n(y,z)=yz,\\
F_n(x,y)=\frac{(-1)^n}{n!}z^n\bar{\varphi}^{(n)}(z),\,F_n(y,x)=0.
\end{gather*}
Moreover, they preserve $\Gamma$.
\end{thm}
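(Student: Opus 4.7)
The strategy is induction on $n \geq 2$. The base case $n = 2$ is carried out in the paragraphs immediately preceding the theorem. For the inductive step, assume $F_2, \ldots, F_{n-1}$ have been constructed satisfying all claimed properties: conditions (a), (b) of Lemma \ref{lem:determine-F_n}, $\Gamma$-preservation, and the prescribed closed-form values on the four generator pairs $(x,z), (y,z), (x,y), (y,x)$. Form the obstruction
\[
O_n := \sum_{i=1}^{n-1} F_i \bullet F_{n-i} \in C^3(A,A).
\]
Standard obstruction theory makes $O_n$ a Hochschild $3$-cocycle, and the inductive $\Gamma$-preservation of the $F_i$ transfers to $O_n$.

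The heart of the induction is a single computational claim, patterned on the treatment of $n = 2$: the Per $3$-cocycle $O_n \circ \theta'_3 \in \calQ^3$ equals $\partial^2 \eta_n$, where
\[
\eta_n := \Bigl(0,\; -yz,\; 0,\; \frac{(-1)^n}{n!}\, z^n \bar{\varphi}^{(n)}(z)\Bigr) \in \calQ^2
\]
is the Per $2$-cochain whose four components are the values we wish to prescribe for $F_n$. Granted this equality, the level-$n$ analogue of the system \eqref{eq:system-equations} is consistent, and its unique solution compatible with condition (b) of Lemma \ref{lem:determine-F_n} is exactly the prescribed quadruple. The lemma then produces a unique Hochschild $2$-cochain $F_n$ satisfying (a), (b) with those generator values and with $\mathsf{b} F_n = O_n$. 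Since $O_n$ preserves $\Gamma$ and the prescribed values lie in the filtration degrees required by the last clause of Lemma \ref{lem:determine-F_n}, the same lemma delivers $\Gamma$-preservation of $F_n$. Uniqueness follows directly from the lemma.

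The main obstacle is verifying the identity $O_n \circ \theta'_3 = \partial^2 \eta_n$. Evaluating both sides on each of the four generators in $\calP_{21} \oplus \calP_{30}$ — using the explicit formulas for $\theta'_3$ listed at the end of Subsection \ref{subsec:Hochschild-cocycle} — reduces the claim to four polynomial identities in $A$. For each, I expand $F_i \bullet F_{n-i}$ on the relevant triples and substitute the inductive closed forms $F_i(x,y) = \frac{(-1)^i}{i!}\, z^i \bar{\varphi}^{(i)}(z)$, $F_i(y,x) = 0$, $F_i(x,z) = 0$, $F_i(y,z) = yz$, collapsing the resulting iterated compositions via the recursions from the proof of Lemma \ref{lem:determine-F_n}. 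The fourth component is the hardest: after collapse it must reduce to a Taylor-type identity expressing how the derivatives $\bar{\varphi}^{(i)}(z)$ for $1 \leq i \leq n-1$ assemble into $\bar{\varphi}^{(n)}(z)$, with coefficients matching $\frac{(-1)^n}{n!}$. The first three components collapse more directly from $F_i(x,z) = 0$, $F_i(y,x) = 0$, and $F_i(y,z) = yz$. Once these four identities are confirmed, existence of $F_n$ is established, uniqueness comes from Lemma \ref{lem:determine-F_n}, and the induction closes.
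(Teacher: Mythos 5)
Your broad strategy matches the paper's: induct on $n$, compute the Per $3$-cocycle $O_n\circ\theta'_3$, exhibit a preimage under $\partial^2$, then determine $F_n$ via Lemma \ref{lem:determine-F_n}. But there is a genuine gap in the formulation of your inductive hypothesis, and it is precisely the part you compress into the phrase ``collapsing the resulting iterated compositions via the recursions from the proof of Lemma \ref{lem:determine-F_n}.''

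Your hypothesis asserts only the four generator values $F_i(x,z)$, $F_i(y,z)$, $F_i(x,y)$, $F_i(y,x)$ plus conditions (a), (b) and $\Gamma$-preservation. But the computation of $S_3$ requires $F_{n-i}(x,z^{j-k})$ for various $j,k$, and the computation of $S_4$ requires $F_{n-i}(y,z^{j-k})$ and $F_i(y, z^{n-i}\bar\varphi^{(n-i)}(z))$. None of these are immediate from the four generator values: unwinding them via the recursion \eqref{eq:lem1} pulls in $\mathsf{b}F_{n-i}$ on triples like $(x,z^{m-1},z)$, which in turn expands as $\sum_{j<n-i} F_j\bullet F_{n-i-j}$ and calls upon $F_j(x,z^m)$ for smaller $j$ and various $m$ --- a nested recursion you have not resolved. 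The paper closes this loop cleanly by strengthening the inductive hypothesis to also include the four derived closed forms \eqref{eq:eq1-quantum}--\eqref{eq:eq4-quantum}, namely $F_i(x,yz)$, $F_i(y,xz)$, $F_i(x,h(z))$, $F_i(y,h(z))$, using these to evaluate $S_3, S_4$ in closed form, and then re-verifying \eqref{eq:eq1-quantum}--\eqref{eq:eq4-quantum} for $F_n$ at the end of each step (the verification itself uses $\mathsf{b}F_n = O_n$, which is only available once $F_n$ is constructed, so the order matters). Without this strengthening your ``four polynomial identities'' cannot be reduced to closed form, and the claim that $S_4$ ``must reduce to a Taylor-type identity'' is an assertion rather than a derivation. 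To repair the proposal, explicitly add the closed forms for $F_i(x,h(z))$, $F_i(y,h(z))$, $F_i(x,yz)$, $F_i(y,xz)$ to the inductive hypothesis and verify them after $F_n$ is built.
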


\begin{proof}
The uniqueness of $F_n$ follows from Lemma \ref{lem:determine-F_n} whenever $F_n$ exists. Let us prove by induction the existence and that the equations
\begin{gather}
F_n(x,yz)=\frac{(-1)^n}{(n-1)!}z^n\bar{\varphi}^{(n-1)}(z)+\frac{(-1)^{n}}{n!}z^{n+1}\bar{\varphi}^{(n)}(z), \label{eq:eq1-quantum}\\
F_n(y,xz)=zyx,\label{eq:eq2-quantum}\\
F_n(x,h(z))=\frac{(-1)^n}{n!}xz^nh^{(n)}(z),\label{eq:eq3-quantum}\\
F_n(y,h(z))=\frac{1}{n!}yz(z^{n-1}h(z))^{(n)}\label{eq:eq4-quantum}
\end{gather}
are fulfilled.

Assume $n\geq 3$ and $F_2, F_3, \ldots, F_{n-1}$ exist. Suppose that $\theta'_3$ followed by
\[
F_1\bullet F_{n-1}+F_2\bullet F_{n-2}+\dots+F_{n-1}\bullet F_1,
\]
corresponds the Per $3$-cocycle $(S_1,S_2,S_3,S_4)$. We have
\begin{align*}
  S_1&=\sum_{i=1}^{n-1}\bigl(-\lambda F_i\bullet F_{n-i}(y,z,x)+ F_i\bullet F_{n-i}(y,x,z)\bigr)=zyx,\\
  S_2&=\sum_{i=1}^{n-1}\bigl(-\lambda^{-1}F_i\bullet F_{n-i}(x,z,y)+F_i\bullet F_{n-i}(x,y,z)\bigr)=zxy,\\
  S_3&=\sum_{i=1}^{n-1}\biggl(F_i\bullet F_{n-i}(x,y,x)-\sum_{j=1}^l\sum_{k=1}^ja_jF_i\bullet F_{n-i}(x,z^{j-k},z)z^{k-1}\biggr)\\
  &=\sum_{i=1}^{n-1}\biggl(F_i(F_{n-i}(x,y),x)-\sum_{j=1}^l\sum_{k=1}^ja_jF_i(F_{n-i}(x,z^{j-k}),z)z^{k-1}\biggr)\\
  &=-\sum_{i=1}^{n-1}\sum_{j=1}^l\sum_{k=1}^ja_jF_i\biggl(\frac{(-1)^{n-i}}{(n-i)!} xz^{n-i}(z^{j-k})^{(n-i)},z\biggr)z^{k-1}\\
  &=-\sum_{j=1}^l\sum_{k=1}^ja_jF_1\biggl((-1)^{n-1}\binom{j-k}{n-1} xz^{j-k},z\biggr)z^{k-1}\\
  &=\sum_{j=1}^l\sum_{k=1}^ja_j(-1)^{n-1}\binom{j-k}{n-1} (\lambda z)^{j}x\\
  &=\sum_{j=1}^la_j(-1)^{n-1}\binom{j}{n} (\lambda z)^{j}x\\
  &=\frac{(-1)^{n-1}}{n!}z^n\bar{\varphi}^{(n)}(z)x.
\end{align*}
For $S_4$, the computation is more complicated. We have
\begin{align*}
  S_4&=\sum_{i=1}^{n-1}\biggl(F_i\bullet F_{n-i}(y,x,y)-\sum_{j=1}^l\sum_{k=1}^ja_jF_i\bullet F_{n-i}(y,(\lambda z)^{j-k},\lambda z)(\lambda z)^{k-1}\biggr)\\
  &=\sum_{i=1}^{n-1}\biggl(-F_i(y,F_{n-i}(x,y)) -\sum_{j=1}^l\sum_{k=1}^ja_j\lambda^jF_i(F_{n-i}(y,z^{j-k}),z)z^{k-1}\biggr)\\
  &=\sum_{i=1}^{n-1}F_i\biggl(-y,\frac{(-1)^{n-i}}{(n-i)!}z^{n-i}\bar{\varphi}^{(n-i)}(z)\biggr)\\
  &\varphantom{=}{}-\sum_{i=1}^{n-1}\sum_{j=1}^l\sum_{k=1}^ja_j\lambda^j F_i\biggl(\frac{1}{(n-i)!}yz(z^{n-i+j-k-1})^{(n-i)},z\biggr)z^{k-1}\\
  &=-\sum_{i=1}^{n-1}\frac{1}{i!}yz\biggl(\frac{(-1)^{n-i}}{(n-i)!}z^{n-1}\bar{\varphi}^{(n-i)}(z)\biggr)^{(i)}\\
  &\varphantom{=}{}-\sum_{i=1}^{n-1}\sum_{j=1}^l\sum_{k=1}^ja_j\lambda^k\binom{n-i+j-k-1}{n-i}z^{j-k} F_i(y,z)z^{k-1}\\
  &=-\sum_{i=1}^{n-1}\frac{(-1)^{n-i}}{i!(n-i)!}yz(z^{n-1}\bar{\varphi}^{(n-i)}(z))^{(i)}\\
  &\varphantom{=}{}-\sum_{i=1}^{n-1}\sum_{j=1}^l\sum_{k=1}^ja_j\lambda^j\binom{n-i+j-k-1}{n-i}yz^j\\
  &=\frac{(-1)^n}{n!}yz^n\bar{\varphi}^{(n)}(z)-S_4'-S_4'',
\end{align*}
where
\begin{align*}
  S_4'&=\sum_{i=0}^{n-1}\frac{(-1)^{n-i}}{i!(n-i)!}yz(z^{n-1}\bar{\varphi}^{(n-i)}(z))^{(i)}\\
  &=\sum_{i=0}^{n-1}\sum_{k=0}^i\frac{(-1)^{n-i}}{i!(n-i)!}\binom{i}{k}yz(z^{n-1})^{(k)}\bar{\varphi}^{(n-k)}(z)\\
  &=\sum_{k=0}^{n-1}\sum_{i=k}^{n-1}\frac{(-1)^{n-i}}{i!(n-i)!}\binom{i}{k}yz(z^{n-1})^{(k)}\bar{\varphi}^{(n-k)}(z)\\
  &=\sum_{k=0}^{n-1}\sum_{i=0}^{n-k-1}\frac{(-1)^{n-i-k}}{(i+k)!(n-i-k)!}\binom{i+k}{k} yz(z^{n-1})^{(k)}\bar{\varphi}^{(n-k)}(z)\\
  &=\sum_{k=0}^{n-1}\sum_{i=0}^{n-k-1}(-1)^{i}\binom{n-k}{i}\frac{(-1)^{n-k}}{(n-k)!} \binom{n-1}{k}yz^{n-k}\bar{\varphi}^{(n-k)}(z)\\
  &=-\sum_{k=0}^{n-1}\frac{1}{(n-k)!}\binom{n-1}{k}yz^{n-k}\bar{\varphi}^{(n-k)}(z)\\
  &=-\frac{1}{n!}yz\sum_{k=0}^{n-1}\binom{n}{k}(z^{n-1})^{(k)}\bar{\varphi}^{(n-k)}(z)\\
  &=-\frac{1}{n!}yz(z^{n-1}\bar{\varphi}(z))^{(n)},
\end{align*}
and
\begin{align*}
  S_4''&=\sum_{i=1}^{n-1}\sum_{j=1}^l\sum_{k=1}^ja_j\lambda^j\binom{n-i+j-k-1}{n-i}yz^j\\
  &=\sum_{i=1}^{n-1}\sum_{j=1}^la_j\lambda^j\binom{n-i+j-1}{n-i+1}yz^j\\
  &=\sum_{i=1}^{n-1}\sum_{j=1}^la_j\lambda^j\binom{n-i+j-1}{j-2}yz^j\\
  &=\sum_{j=1}^la_j\lambda^j\biggl(\binom{n+j-1}{j-1}-j\biggr)yz^j\\
  &=\sum_{j=1}^la_j\lambda^j\binom{n+j-1}{n}yz^j-\sum_{j=1}^la_j\lambda^jjyz^j\\
  &=\frac{1}{n!}yz(z^{n-1}\bar{\varphi}(z))^{(n)}-yz\bar{\varphi}'(z).
\end{align*}
Thus
\[
S_4=yz\bar{\varphi}'(z)+\frac{(-1)^n}{n!}yz^n\bar{\varphi}^{(n)}(z).
\]

Choose a preimage of $(S_1,S_2,S_3,S_4)$ to be
\[
\biggl(0,\, -yz,\,0,\, \frac{(-1)^n}{n!}z^n\bar{\varphi}^{(n)}(z)\biggr),
\]
and then establish a system of equations similar to \eqref{eq:system-equations}, whose solution satisfying the conditions (a), (b) in Lemma \ref{lem:determine-F_n} is
\begin{gather*}
F_n(x,z)=0,\,F_n(y,z)=yz,\\
F_n(x,y)=\frac{(-1)^n}{n!}z^n\bar{\varphi}^{(n)}(z),\,F_n(y,x)=0.
\end{gather*}
By \eqref{eq:obstruction} and the induction hypotheses, $\mathsf{b}F_n$ preserves $\Gamma$. Henceforth we deduce from the above equations that $F_n$ also preserves $\Gamma$.

Finally, let us verify the equalities \eqref{eq:eq1-quantum}--\eqref{eq:eq4-quantum}. On one hand,
\begin{align*}
  \sum_{i=1}^{n-1}F_{i}\bullet F_{n-i}(x,y,z)&=\sum_{i=1}^{n-1}F_{i}( F_{n-i}(x,y),z)-\sum_{i=1}^{n-1}F_{i}(x, F_{n-i}(y,z))\\
  &=\sum_{i=1}^{n-1}F_{i}\biggl(\frac{(-1)^{n-i}}{(n-i)!}z^{n-i}\bar{\varphi}^{(n-i)}(z),z\biggr) -\sum_{i=1}^{n-1}F_{i}(x, yz)\\
  &=\sum_{i=1}^{n-1}\frac{(-1)^{i-1}}{(i-1)!}z^i\bar{\varphi}^{(i-1)}(z) -\sum_{i=1}^{n-1}\frac{(-1)^{i}}{i!}z^{i+1}\bar{\varphi}^{(i)}(z)\\
  &=z\bar{\varphi}(z) -\frac{(-1)^{n-1}}{(n-1)!}z^{n}\bar{\varphi}^{(n-1)}(z)\\
  &=zxy +\frac{(-1)^{n}}{(n-1)!}z^{n}\bar{\varphi}^{(n-1)}(z).
\end{align*}
On the other hand, $\mathsf{b}F_n=\sum_{i=1}^{n-1}F_{i}\bullet F_{n-i}$, so
\begin{align*}
 \sum_{i=1}^{n-1}F_{i}\bullet F_{n-i}(x,y,z)&=x F_n(y,z)-F_n(xy,z)+F_n(x,yz)-F_n(x,y)z\\
 &=xyz+F_n(x,yz)-\frac{(-1)^n}{n!}z^n\bar{\varphi}^{(n)}(z)z.
\end{align*}
It follows that
\[
F_n(x,yz)=\frac{(-1)^{n}}{(n-1)!}z^{n}\bar{\varphi}^{(n-1)}(z)+\frac{(-1)^n}{n!}z^{n+1}\bar{\varphi}^{(n)}(z),
\]
namely, \eqref{eq:eq1-quantum} holds.

For \eqref{eq:eq3-quantum}, by computing $\sum_{i=1}^{n-1}F_{i}\bullet F_{n-i}(x,z^{r-1},z)$ and $\mathsf{b}F_n(x,z^{r-1},z)$, one has
\begin{align*}
&\begin{aligned}
  \sum_{i=1}^{n-1}F_{i}\bullet F_{n-i}(x,z^{r-1},z)&=\sum_{i=1}^{n-1}F_{i}( F_{n-i}(x,z^{r-1}),z)\\
  &=\sum_{i=1}^{n-1}F_{i}\biggl(\frac{(-1)^{n-i}}{(n-i)!}xz^{n-i}(z^{r-1})^{(n-i)},z\biggr)\\
  &=-\frac{(-1)^{n-1}}{(n-1)!}xz^{n}(z^{r-1})^{(n-1)}\\
  &=(-1)^{n}\binom{r-1}{n-1}xz^r,
\end{aligned}\\
&\begin{aligned}
  \mathsf{b}F_n(x,z^{r-1},z)&=x F_n(z^{r-1},z)-F_n(xz^{r-1},z)+F_n(x,z^{r})-F_n(x,z^{r-1})z\\
  &=F_n(x,z^{r})-F_n(x,z^{r-1})z.
\end{aligned}
\end{align*}
Thus
\begin{align*}
 F_n(x,z^{r})&=F_n(x,z^{r-1})z+(-1)^{n}\binom{r-1}{n-1}xz^r\\
 &=F_n(x,z^{r-2})z^2+(-1)^{n}\binom{r-2}{n-1}xz^r+(-1)^{n}\binom{r-1}{n-1}xz^r\\
 &\varphantom{=}\vdots\\
 &=(-1)^{n}\biggl(\binom{1}{n-1}+\binom{2}{n-1}+\dots+\binom{r-1}{n-1}\biggr)xz^r\\
 &=(-1)^{n}\binom{r}{n}xz^r\\
 &=\frac{(-1)^{n}}{n!}xz^n(z^r)^{(n)}
\end{align*}
and so
\[
F_n(x,h(z))=\frac{(-1)^{n}}{n!}xz^nh^{(n)}(z).
\]

Similarly, \eqref{eq:eq2-quantum} and \eqref{eq:eq4-quantum} can be proved.
\end{proof}

Henceforth, we obtain a formal deformation $(A[[\tau]],*)$ of $A$. The multiplication on generators is given by
\begin{gather*}
z*x=zx,\,z*y=zy,\,x*x=x^2,\,y*y=y^2,\\
z*z=z^2,\, x*z=xz-\tau xz,\, y*z=\sum_{n=0}^{\infty}\tau^nyz,\\
x*y=xy+\sum_{n=1}^{\infty}\frac{(-1)^n}{n!}\tau^nz^n\bar{\varphi}^{(n)}(z),\, y*x=yx=\varphi(z).
\end{gather*}
Since every $F_n$ preserves $\Gamma$ and $\dim_{\kk} \Gamma^mA<\infty$ for all $m$, $*$ is locally finite. This fact gives rise to a subalgebra $\tilde{A}$ of $A[[t]]$,  in terms of generators and relations, $\tilde{A}=\kk[[\tau]]\langle x, y, z\rangle/(f_1,f_2,f_3,f_4)$ where
\begin{align*}
f_1&:=xz+\tau \lambda zx-\lambda zx=xz-(1-\tau)\lambda zx,\\
f_2&:=yz-\sum_{n=1}^{\infty}\tau^n\lambda^{-1}zy-\lambda^{-1}zy=yz-(1-\tau)^{-1}\lambda^{-1}zy,\\
f_3&:=xy-\sum_{n=1}^{\infty}\frac{(-1)^n}{n!}\tau^nz^n\bar{\varphi}^{(n)}(z)-\bar{\varphi}(z)=xy-\bar{\varphi}(z-\tau z),\\
f_4&:=yx-\varphi(z).
\end{align*}
The locally finite deformation $\tilde{A}_{\tau}$ is a quantum GWA $\kk(\!(\tau)\!)[z;(1-\tau)\lambda,0,\varphi(z)]$.

Now let us consider the classical case $A=\kk[z;1,\eta,\varphi(z)]$. Let $F_1$ be the Hochschild $2$-cocycle corresponding to $f(1)$. Thus
\begin{gather*}
F_1(x,z)=-x,\,F_1(y,z)=y,\, F_1(z,x)=0,\\
F_1(x,y)=-\bar{\varphi}'(z),\,F_1(y,x)=0,\, F_1(z,y)=0.
\end{gather*}
By a similar (and easier) argument with Theorem \ref{thm:formal-deformation-quantum}, we have

\begin{thm}\label{thm:formal-deformation-classical}
Let $A=\kk[z;1,\eta,\varphi(z)]$ and $F_1$ correspond to $f(1)$. There exist uniquely a family of $\kk$-bilinear maps $F_n\colon A\times A\to A$, $n\geq 2$ integrating $F_1$ that satisfy the conditions (a), (b) in Lemma \ref{lem:determine-F_n} and are determined by
\begin{gather*}
F_n(x,z)=0,\,F_n(y,z)=0,\\
F_n(x,y)=\frac{(-1)^n}{n!}\bar{\varphi}^{(n)}(z),\,F_n(y,x)=0.
\end{gather*}
Moreover, they preserve $\Gamma$.
\end{thm}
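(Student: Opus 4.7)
The plan is to follow the proof of Theorem \ref{thm:formal-deformation-quantum} step by step, substituting the classical data $\lambda=1$, $\eta\ne 0$. Most of the computations become noticeably simpler: the weights $\lambda^{i}$ that appeared throughout the quantum argument all collapse to $1$, and the closed-form values at the generators no longer carry a $z^{n}$ factor, so the obstruction $3$-cocycles $(S_{1},S_{2},S_{3},S_{4})$ will be easier to identify.

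First I would translate the Per $2$-cocycle $f(1)=(x,-y,0,-\bar{\varphi}'(z))$ through the comparison $\theta_{2}$ of \S\ref{subsec:Hochschild-cocycle} to recover $F_{1}$ on generators, namely $F_{1}(x,z)=-x$, $F_{1}(y,z)=y$, $F_{1}(x,y)=-\bar{\varphi}'(z)$, $F_{1}(y,x)=0$, and check that $F_{1}$ satisfies (a), (b) of Lemma \ref{lem:determine-F_n}, so it is uniquely determined by these values and preserves $\Gamma$. Then, by induction on $n\ge 2$, assuming $F_{1},\dots,F_{n-1}$ have already been constructed with the claimed values at generators together with natural analogues of \eqref{eq:eq1-quantum}--\eqref{eq:eq4-quantum}, compute the Per $3$-cocycle
\[
(S_{1},S_{2},S_{3},S_{4}):=\Bigl(\sum_{i=1}^{n-1}F_{i}\bullet F_{n-i}\Bigr)\circ\theta'_{3}.
\]
By the inductive formulas together with $\lambda=1$, the components $S_{1},S_{2}$ collapse to $yx$ and $xy$ respectively, and the binomial manipulations that handled $S_{3}$, $S_{4}$ in Theorem \ref{thm:formal-deformation-quantum} simplify (no $\lambda^{j}$-weights) to yield $S_{3}=\tfrac{(-1)^{n-1}}{n!}\bar{\varphi}^{(n)}(z)\,x$ and $S_{4}=y\bar{\varphi}'(z)+\tfrac{(-1)^{n}}{n!}y\bar{\varphi}^{(n)}(z)$ after telescoping against the $y\bar{\varphi}'(z)$-contribution coming from $\sum_{j}a_{j}\sigma(z)^{j}$.

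Next, take the preimage
\[
\bigl(0,\,-y,\,0,\,\tfrac{(-1)^{n}}{n!}\bar{\varphi}^{(n)}(z)\bigr)
\]
under $\partial^{2}$; this is exactly the system whose solution, subject to (a), (b) of Lemma \ref{lem:determine-F_n}, gives the generator values $F_{n}(x,z)=0$, $F_{n}(y,z)=0$, $F_{n}(x,y)=\tfrac{(-1)^{n}}{n!}\bar{\varphi}^{(n)}(z)$, $F_{n}(y,x)=0$ claimed in the theorem. Lemma \ref{lem:determine-F_n} then extends $F_{n}$ uniquely to a $\kk$-bilinear map with $\mathsf{b}F_{n}=\sum_{i=1}^{n-1}F_{i}\bullet F_{n-i}$, and because this coboundary preserves $\Gamma$ by the inductive hypothesis, so does $F_{n}$.

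The principal obstacle, just as in the quantum case, lies in the bookkeeping required to propagate the inductive hypotheses for the new $F_{n}$, that is, the classical analogues of \eqref{eq:eq1-quantum}--\eqref{eq:eq4-quantum}. These follow from evaluating $\mathsf{b}F_{n}(x,z^{r-1},z)$, $\mathsf{b}F_{n}(x,y,z)$, $\mathsf{b}F_{n}(y,z^{r-1},z)$ and $\mathsf{b}F_{n}(y,x,z)$ and applying standard binomial telescoping identities of the form $\sum_{k=1}^{r-1}\binom{k}{n-1}=\binom{r}{n}$; because $\lambda=1$, every intermediate sum is free of $\lambda$-weights and the identities close without the extra twists encountered in Theorem \ref{thm:formal-deformation-quantum}. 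Uniqueness of the family $\{F_{n}\}$ is then an immediate consequence of Lemma \ref{lem:determine-F_n}, completing the induction.
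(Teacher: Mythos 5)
Your high-level plan is exactly the paper's (follow the quantum proof, noting that $\lambda=1$ simplifies the bookkeeping), but the intermediate formulas you write down are transliterated too mechanically from the quantum case and are incorrect in a way that would contradict the theorem statement. The key difference you have overlooked: in the quantum case $F_n(y,z)=yz$ for \emph{every} $n\geq 1$, but in the classical case $F_1(y,z)=y$ while $F_n(y,z)=0$ for all $n\geq 2$ (this is what the theorem asserts). Consequently the $i$-sum in the obstruction computation collapses differently.

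Concretely, with the correct inductive data one gets $S_1=S_2=0$ and $S_4=\tfrac{(-1)^n}{n!}\,y\bar\varphi^{(n)}(z)$; the $y\bar\varphi'(z)$-term you report in $S_4$ does not appear, precisely because the $\sum_i F_i\bullet F_{n-i}$ term corresponding to $S_4''$ in the quantum proof has only its $i=1$ summand surviving when $F_i(y,z)=0$ for $i\geq 2$. The corresponding preimage is therefore $\bigl(0,\,0,\,0,\,\tfrac{(-1)^n}{n!}\bar\varphi^{(n)}(z)\bigr)$, not $\bigl(0,-y,0,\tfrac{(-1)^n}{n!}\bar\varphi^{(n)}(z)\bigr)$; indeed $\partial_h^{11}(0,-y)=(yx,xy)\neq(0,0)$, so your proposed tuple is not a preimage of the correct $(S_1,\dots,S_4)$. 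Moreover, reading off the generator values from the preimage you wrote, the second equation of the system (the classical analogue of \eqref{eq:system-equations}) becomes $F_n(z,y)-F_n(y,z)=-y$, which forces $F_n(y,z)=y$, contradicting the $F_n(y,z)=0$ you claim to obtain. This is a genuine internal inconsistency, not a notational slip: if one actually followed your preimage one would construct a \emph{different} formal deformation (with $y*z=yz+\tfrac{\tau}{1-\tau}y$ rather than $y*z=yz+\tau y$), which does not match the locally finite deformation $\kk(\!(\tau)\!)[z;1,\eta-\tau,\varphi(z)]$ identified after the theorem. The remedy is to replace your $(S_1,S_2,S_4)$ and preimage by the values above, after which the argument goes through as you outline.
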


Like the quantum case, we obtain a formal deformation $(A[[\tau]],*)$, as well as a locally finite deformation $\tilde{A}_{\tau}=\kk(\!(\tau)\!)\langle x, y, z\rangle/(f_1,f_2,f_3,f_4)$ where
\begin{align*}
f_1&:=xz+\tau x- (z+\eta)x=xz-(z+\eta-\tau) x,\\
f_2&:=yz-\tau y-(z-\eta)y=yz-(z-\eta+\tau)y,\\
f_3&:=xy-\sum_{n=1}^{\infty}\frac{(-1)^n}{n!}\tau^n\bar{\varphi}^{(n)}(z)-\bar{\varphi}(z)=xy-\bar{\varphi}(z-\tau),\\
f_4&:=yx-\varphi(z).
\end{align*}
Obviously, $\tilde{A}_{\tau}$ is a classical GWA $\kk(\!(\tau)\!)[z;1,\eta-\tau,\varphi(z)]$.

Summarizing both cases,
\begin{enumerate}
\item $\tilde{A}_\tau$ is a noncommutative GWA over the field $\kk(\!(\tau)\!)$,
\item $\tilde{A}_\tau$ is homologically smooth if and only if $A$ is also,
\item $\tilde{A}_\tau$ is quantum (resp.\ classical) if $A$ is quantum (resp.\ classical).
\end{enumerate}

\subsection{Deformations of GWAs (II)}\label{subsec:deformation-comm}
In the foregoing subsection, we studied deformations of noncommutative GWAs. We ask the opposite question: Can we obtain a noncommutative GWA by deforming a commutative algebra? In this subsection we will give a positive answer under an assumption of the field $\kk$.

Let us first give a brief review of T.J.~Hodges's result \cite{Hodge:Kleinian-singularities}: When $\deg\varphi(z)\geq 2$, $A=\kk[z;1,1,\varphi(z)]$ can be viewed as a deformation of type-$A$ Kleinian singularity.

Recall $\varphi(z)=a_lz^l+\cdots a_1z+a_0$, $a_l\neq 0$. Equip $A$ with a filtration by
\[
F^nA=\sum_{2i+lj\leq n}\kk z^ix^j+\sum_{2i+lj\leq n}\kk z^iy^j
\]
and the associated graded algebra is
\[
\mathrm{gr}A=\kk[x,y,z]/(xy-a_lz^l).
\]
Since $l\geq 2$, after a suitable linear transformation, $\mathrm{gr}A$ is isomorphic to the coordinate algebra of the type-$A_{l-1}$ Kleinian singularity $x^2+y^2+z^l$.

\begin{remark}
We mention in passing that bimodule projective resolutions of the coordinate algebras of type-$A$ Kleinian singularities, or more generally, a class of algebras $S=\kk[x,y,z]/(x^m+y^n+z^l)$, $m$, $n$, $l\geq 2$, can be constructed using homotopy double complexes. Let $R=\kk[x,y]$ and $\rmd s=s\otimes 1-1\otimes s$. By the exact complexes
\begin{gather}
\cdots \to S\otimes_R S\xrightarrow{\Delta(z^l)} S\otimes_R S\xrightarrow{\rmd z} S\otimes_R S\xrightarrow{\Delta(z^l)} S\otimes_R S\xrightarrow{\rmd z} S\otimes_R S\to S\to 0,\label{eq:complex1}\\
0\to S\otimes S\xrightarrow{\binom{\rmd y}{-\rmd x}} (S\otimes S)^2\xrightarrow{\scriptscriptstyle(\rmd x\,\, \rmd y)} S\otimes S\to S\otimes_R S\to 0,\label{eq:complex2}
\end{gather}
we construct a homotopy double complex
\[
\xymatrix@R=8mm@C=8mm{
\cdots \ar[r] & S\otimes S \ar[d]\ar[r] & S\otimes S \ar[d]\ar[r] & S\otimes S \ar[d]\ar[r] & S\otimes S \ar[d]\ar[r] & S\otimes S \ar[d] \\
\cdots \ar[r] & (S\otimes S)^2 \ar[d]\ar[r]\ar[rru]|!{[ru];[r]}\hole & (S\otimes S)^2 \ar[d]\ar[r]\ar[rru]|!{[ru];[r]}\hole & (S\otimes S)^2 \ar[d]\ar[r]\ar[rru]|!{[ru];[r]}\hole & (S\otimes S)^2 \ar[d]\ar[r] & (S\otimes S)^2 \ar[d] \\
\cdots \ar[r] & S\otimes S \ar[r]\ar[rru]|!{[ru];[r]}\hole & S\otimes S \ar[r]\ar[rru]|!{[ru];[r]}\hole & S\otimes S \ar[r]\ar[rru]|!{[ru];[r]}\hole & S\otimes S \ar[r] & S\otimes S
}
\]
where the horizontal maps are induce by the differentials of \eqref{eq:complex1}, the vertical maps are the differentials of \eqref{eq:complex2}, the lower slanted maps are given by $\binom{-\Delta(x^m)}{-\Delta(y^n)}$, the upper ones by $(-\Delta(y^n),\Delta(x^m))$.
\end{remark}

Now let us image a noncommutative algebra as the coordinate algebra of a ``noncommutative variety'', and identify the homological (non)smoothness of the former with the (non)smoothness of the latter. Kleinian singularities are nonsmooth in the classical sense; but the algebra $A$ is homologically smooth provided that $\varphi(z)$ has no multiple roots, by Lemma \ref{lem:no-multi-root}. Hence following Hodges's point of view, a homologically smooth $A$ is understood as a \textit{noncommutative resolution of singularity}.

Here we give another sort of deformation such that the (non)smoothness is retained by deformation. We put forward a technical assumption on the field $\kk$ (the idea is motivated by \cite{Van-den-Bergh:Koszul-bimodule-complex}). Let $t$ be equal to $\lambda-1$ if $A$ is a quantum GWA, or $\eta$ if $A$ is a classical GWA.

\textbf{Assumption (A)}: There exists an intermediate field $\mathbb{Q}\subset\kk_0\subset\kk$ such that (i) $\kk_0$ contains $a_l$ and all roots of $\varphi(z)$, (ii) $t$ is a transcendental element over $\kk_0$, (iii) $\kk=\kk_0(\!(t)\!)$.

Under \textbf{Assumption (A)}, consider the $\kk_0[[t]]$-algebra $\tilde{\mathfrak{A}}$ whose generators are $x$, $y$, $z$, subject to $yx=\varphi(z)$ as well as
\begin{align*}
[x,z]&=tzx,\qquad [z,y]=tyz,\\
[x,y]&=\varphi(z+tz)-\varphi(z)=tz\varphi'(z)+O(t^2),
\end{align*}
or
\begin{align*}
[x,z]&=tx,\qquad [z,y]=ty,\\
[x,y]&=\varphi(z+t)-\varphi(z)=t\varphi'(z)+O(t^2),
\end{align*}
depending on whether $A$ is quantum or classical. Since $\varphi(z)\in\kk_0[z]$, by \cite[\S 1]{Gerstenhaber-Schack:deformation} there exists a formal deformation $*$ of
\[
\mathfrak{A}:=\tilde{\mathfrak{A}}/t\tilde{\mathfrak{A}}\cong\kk_0[x,y,z]/(yx-\varphi(z))
\]
such that $(\mathfrak{A}[[t]],*)$ is isomorphic to the $t$-adic completion of $\tilde{\mathfrak{A}}$ and $*$ results in a locally finite deformation which is isomorphic to $A$.

Note that $\mathfrak{A}$ is a commutative GWA. By Jacobian criterion, $\mathfrak{A}$ is smooth if and only if $\varphi(z)$ has no multiple roots, and hence if and only if $A$ is homologically smooth. Under \textbf{Assumption (A)}, the $\kk(\!(\tau)\!)$-algebra $\tilde{A}_\tau$ in subsection \ref{subsec:deformation} can be regarded as ``deformation of deformation'' of $\mathfrak{A}$.

\subsection{Van den Bergh duality}

Let $A$ be homologically smooth, $M$ an $A$-bimodule and let $\Tot\calQ^{\cdot\cdot}$, $\alpha(z)$, $\beta(z)$ be as in Sect.\ \ref{sec:homological-smoothness}. Suppose that $(m_1,m_2,m_3,m_4)\in M^{\oplus 4}$ is any Per $2$-cocycle, then
\begin{align*}
\partial_h^{11}(m_1,m_2)+\partial_v^{2}(m_3,m_4)&=0,\\
s^{1}(m_1,m_2)+\partial_h^{20}(m_3,m_4)&=0.
\end{align*}

It follows that
\begin{align}
-ym_1-\lambda m_2x+zm_3-m_3z&=0,\label{eq:VdB-duality-1}\\
-m_1y-\lambda xm_2+\sigma(z)m_4-m_4\sigma(z)&=0,\label{eq:VdB-duality-2}\\
-{}^{\sigma}\!\Delta(\varphi)\cdot m_1+xm_3-m_4x&=0,\label{eq:VdB-duality-3}\\
-\lambda\Delta^{\sigma}(\varphi)\cdot m_2-m_3y+ym_4&=0.\label{eq:VdB-duality-4}
\end{align}
By induction, we obtain from \eqref{eq:VdB-duality-1}, \eqref{eq:VdB-duality-2} that for any $j\ge 1$,
\begin{align*}
z^jm_3-m_3z^j&=y({}^{\sigma}\!\Delta(z^{j})\cdot m_1)+\lambda(\Delta^{\sigma}(z^{j})\cdot m_2)x,\\
\sigma(z)^jm_4-m_4\sigma(z)^j&=({}^{\sigma}\!\Delta(z^{j})\cdot m_1)y+\lambda x(\Delta^{\sigma}(z^{j})\cdot m_2).
\end{align*}
Thus
\begin{align}
\Delta(\varphi)\cdot (m_3\beta)&=m_3\beta\varphi'+y({}^{\sigma}\!\Delta^D(\varphi)\cdot m_1)\beta+\lambda(\Delta^{\sigma D}(\varphi)\cdot m_2)\sigma(\beta)x,\label{eq:VdB-duality-5}\\
{}^{\sigma}\!\Delta^{\sigma}(\varphi)\cdot (m_4\sigma(\beta))&=m_4\sigma(\beta\varphi')+({}^{\sigma}\!\Delta^D(\varphi)\cdot m_1)\beta y+\lambda x(\Delta^{\sigma D}(\varphi)\cdot m_2)\sigma(\beta).\label{eq:VdB-duality-6}
\end{align}
Let $n_1=-m_3\beta\in M$. Then by \eqref{eq:VdB-duality-5} the first component of $s^{0}(n_1)$ is
\begin{align*}
  &\varphantom{=}m_3-m_3\alpha yx+y({}^{\sigma}\!\Delta^D(\varphi)\cdot m_1)\beta+\lambda(\Delta^{\sigma D}(\varphi)\cdot m_2)\sigma(\beta)x\\
  &=m_3+y({}^{\sigma}\!\Delta^D(\varphi)\cdot m_1)\beta-(m_3\alpha y-\lambda(\Delta^{\sigma D}(\varphi)\cdot m_2)\sigma(\beta))x.
\end{align*}
Denote $n_3=-({}^{\sigma}\!\Delta^D(\varphi)\cdot m_1)\beta$ and $n_4=m_3\alpha y-\lambda(\Delta^{\sigma D}(\varphi)\cdot m_2)\sigma(\beta)$. The first component of $s^{0}(n_1)+\partial_h^{10}(n_3,n_4)$ is $m_3$.

Next we will compute $\partial_h^{01}(n_1)+\partial_v^{1}(n_3,n_4)$. By \eqref{eq:inj.der}, we have
\begin{align*}
\sigma(z)n_3-n_3z&=m_1\beta\varphi'-({}^{\sigma}\!\Delta(\varphi)\cdot m_1)\beta,\\
zn_4-n_4\sigma(z)&=zm_3\alpha y-m_3\alpha y\sigma(z)+\lambda m_2\sigma(\beta\varphi')-\lambda(\Delta^{\sigma}(\varphi)\cdot m_2)\sigma(\beta).
\end{align*}
So by \eqref{eq:VdB-duality-1}--\eqref{eq:VdB-duality-4},
\begin{align*}
  &\varphantom{=}\partial_h^{01}(n_1)+\partial_v^{10}(n_3,n_4)\\
  &=(xm_3\beta -\lambda m_3\beta x+m_1\beta\varphi'-({}^{\sigma}\!\Delta(\varphi)\cdot m_1)\beta,\, ym_3\beta-\lambda^{-1}m_3\beta y\\
  &\varphantom{=}{}+\lambda^{-1}zm_3\alpha y -\lambda^{-1}m_3z\alpha y+m_2\sigma(\beta\varphi')-(\Delta^{\sigma}(\varphi)\cdot m_2)\sigma(\beta))\\
  &=(m_1\beta\varphi'+m_4x\beta-\lambda m_3\beta x, \, ym_3\beta-\lambda^{-1}ym_4\sigma(\beta)+\lambda^{-1}zm_3\alpha y\\
  &\varphantom{=}{}-\lambda^{-1}m_3z\alpha y+m_2-m_2x\alpha y)\\
  &=(m_1\beta\varphi'+m_4x\beta-\lambda m_3\beta x, \, ym_3\beta-\lambda^{-1}ym_4\sigma(\beta)+\lambda^{-1}ym_1\alpha y+m_2)\\
  &=(m_1,\, m_2)-(m_1\alpha yx+\lambda m_3\beta x-m_4\sigma(\beta)x,\, -\lambda^{-1}ym_1\alpha y-ym_3\beta\\
  &\varphantom{=}{}+\lambda^{-1}ym_4\sigma(\beta)).
\end{align*}
Denote $n_2=\lambda^{-1}m_1\alpha y+m_3\beta-\lambda^{-1}m_4\sigma(\beta)$. Then
\[
(m_1,m_2)=\partial_h^{01}(n_1)+\partial_v^{1}(n_3,n_4)+(\lambda n_2x, -yn_2).
\]

Finally, let us compute $(m_3,m_4)-s^{0}(n_1)-\partial_h^{10}(n_3,n_4)$. To the end, it suffices to consider the second component. By \eqref{eq:VdB-duality-3}, \eqref{eq:VdB-duality-6},
\begin{align*}
&\varphantom{=}m_4+\lambda{}^{\sigma}\!\Delta^{\sigma}(\varphi)\cdot n_1-n_3y-xn_4\\
&=m_4-\lambda{}^{\sigma}\!\Delta^{\sigma}(\varphi)\cdot(m_3\beta)+({}^{\sigma}\!\Delta^D(\varphi)\cdot m_1)\beta y-xm_3\alpha y\\
&\varphantom{=}{}+\lambda x(\Delta^{\sigma D}(\varphi)\cdot m_2)\sigma(\beta)\\
&=m_4-{}^{\sigma}\!\Delta^{\sigma}(\varphi)\cdot(\lambda n_2-m_1\alpha y+m_4\sigma(\beta))+({}^{\sigma}\!\Delta^D(\varphi)\cdot m_1)\beta y\\
&\varphantom{=}{}-xm_3\alpha y+\lambda x(\Delta^{\sigma D}(\varphi)\cdot m_2)\sigma(\beta)\\
&=m_4-\lambda{}^{\sigma}\!\Delta^{\sigma}(\varphi)\cdot n_2+({}^{\sigma}\!\Delta(\varphi)\cdot m_1)\alpha y-m_4\sigma(\beta\varphi')-xm_3\alpha y\\
&=m_4\sigma(\alpha)xy-\lambda{}^{\sigma}\!\Delta^{\sigma}(\varphi)\cdot n_2-m_4x\alpha y\\
&=-\lambda{}^{\sigma}\!\Delta^{\sigma}(\varphi)\cdot n_2,
\end{align*}
so $(m_3,m_4)=s^{0}(n_1)+\partial_h^{10}(n_3,n_4)+(0,-\lambda{}^{\sigma}\!\Delta^{\sigma}(\varphi)
\cdot n_2)$. It follows that
\begin{align*}
  (m_1,m_2,m_3,m_4)&=\partial^1(n_1,n_3,n_4)+(\lambda n_2x, -yn_2, 0,-\lambda{}^{\sigma}\!\Delta^{\sigma}(\varphi)\cdot n_2)\\
  &=\partial^1(n_1,n_3,n_4)+f(n_2)
\end{align*}
where $f$ is the map defined at the beginning of this section.

Define $g\colon \Ker\partial^2\to M$ by
\[
  g(m_1,m_2,m_3,m_4)=\lambda^{-1}m_1\alpha y+m_3\beta(z) -\lambda^{-1}m_4\sigma(\beta(z)).
\]

\begin{lem}
  If $A$ is homologically smooth, then we have
  \begin{enumerate}
    \item $g(\Ima\partial^1)\subset [A,M^{\nu}]$,
    \item $fg-\id\subset\Ima\partial^1$,
    \item $gf-\id\subset[A,M^{\nu}]$.
  \end{enumerate}
\end{lem}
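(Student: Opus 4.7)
The plan is to extract (2) directly from the calculation immediately preceding the lemma, to establish (3) by unwinding $gf(m)$, and to derive (1) by splitting $g\partial^1(n_1,n_3,n_4)$ into the three contributions coming from $n_1$, $n_3$, and $n_4$.

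For (2), the paragraphs leading up to the lemma already prove that every element $(m_1,m_2,m_3,m_4)\in\Ker\partial^2$ admits a decomposition
\[
(m_1,m_2,m_3,m_4)=\partial^1(n_1,n_3,n_4)+f(n_2)
\]
in which $n_2=\lambda^{-1}m_1\alpha y+m_3\beta-\lambda^{-1}m_4\sigma(\beta)=g(m_1,m_2,m_3,m_4)$. Hence $(fg-\id)(m_1,m_2,m_3,m_4)=-\partial^1(n_1,n_3,n_4)\in\Ima\partial^1$, which is (2).

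For (3), substituting $f(m)=(\lambda mx,-ym,0,-\lambda{}^{\sigma}\!\Delta^{\sigma}(\varphi)\cdot m)$ into $g$ gives $gf(m)=mx\alpha y+({}^{\sigma}\!\Delta^{\sigma}(\varphi)\cdot m)\sigma(\beta)$. Since $x\alpha=\sigma(\alpha)x$ and $xy=\sigma(\varphi)$, the first summand is $m\sigma(\alpha)\sigma(\varphi)$, and applying $\sigma$ to the B\'ezout identity $\alpha\varphi+\beta\varphi'=1$ yields $\sigma(\alpha)\sigma(\varphi)+\sigma(\beta)\sigma(\varphi')=1$, so $m\sigma(\alpha)\sigma(\varphi)=m-m\sigma(\beta\varphi')$. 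It therefore suffices to verify the congruence $({}^{\sigma}\!\Delta^{\sigma}(\varphi)\cdot m)\sigma(\beta)\equiv m\sigma(\beta\varphi')\pmod{[A,M^{\nu}]}$. This is a telescoping step: as $\nu(\sigma(z))=\sigma(z)$, each $\sigma(z)^{k-i}(m\sigma(z)^{i-1}\sigma(\beta))-(m\sigma(z)^{i-1}\sigma(\beta))\sigma(z)^{k-i}$ is a twisted commutator, and summing the expansion ${}^{\sigma}\!\Delta^{\sigma}(\varphi)\cdot m=\sum_k a_k\sum_i\sigma(z)^{k-i}m\sigma(z)^{i-1}$ collapses the double sum modulo $[A,M^{\nu}]$ to $\sum_k ka_k m\sigma(z)^{k-1}\sigma(\beta)=m\sigma(\varphi')\sigma(\beta)=m\sigma(\beta\varphi')$.

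For (1), expand $g\partial^1(n_1,n_3,n_4)$ via the explicit formulas and group the result as $A_1+A_2+A_3$ according to whether $n_1$, $n_3$, or $n_4$ appears. Using $x\beta=\sigma(\beta)x$, the $n_4$-part becomes $A_3=n_4\sigma(\beta)x-\lambda^{-1}xn_4\sigma(\beta)=-\lambda^{-1}\bigl(x(n_4\sigma(\beta))-(n_4\sigma(\beta))\nu(x)\bigr)\in[A,M^{\nu}]$. The $n_3$-part is $A_2=\lambda^{-1}(\sigma(z)n_3-n_3z)\alpha y+yn_3\beta-\lambda^{-1}n_3y\sigma(\beta)$; the identities $y\sigma(\beta)=\beta y$ and $y\sigma(z)=zy$, together with $\nu(y)=\lambda^{-1}y$ and $\nu(\sigma(z))=\sigma(z)$, identify $A_2$ with the sum of the twisted commutators $\lambda^{-1}\bigl(\sigma(z)(n_3\alpha y)-(n_3\alpha y)\nu(\sigma(z))\bigr)$ and $y(n_3\beta)-(n_3\beta)\nu(y)$. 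Finally, the $n_1$-part is
\[
A_1=-\lambda^{-1}xn_1\alpha y+n_1\sigma(\alpha)\sigma(\varphi)-(\Delta(\varphi)\cdot n_1)\beta+({}^{\sigma}\!\Delta^{\sigma}(\varphi)\cdot n_1)\sigma(\beta).
\]
Since $n_1\sigma(\alpha)\sigma(\varphi)=n_1x\alpha y$, the first two terms combine to $-\lambda^{-1}(xn_1-\lambda n_1x)\alpha y$, which differs from the twisted commutator $-\lambda^{-1}\bigl(x(n_1\alpha y)-(n_1\alpha y)\nu(x)\bigr)$ by $-n_1(\alpha\varphi-\sigma(\alpha)\sigma(\varphi))$ (using $x\alpha=\sigma(\alpha)x$ and $yx=\varphi$). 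The last two terms reduce, by the same telescoping argument as in (3) applied once with $z$ and once with $\sigma(z)$, to $n_1(\sigma(\beta)\sigma(\varphi')-\beta\varphi')$ modulo $[A,M^{\nu}]$. Summing and invoking both $\alpha\varphi+\beta\varphi'=1$ and its $\sigma$-image yields $A_1\equiv n_1\bigl(\sigma(\alpha\varphi+\beta\varphi')-(\alpha\varphi+\beta\varphi')\bigr)=0$.

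The main obstacle is the $n_1$-part of (1): it is the only step in which both the B\'ezout identity $\alpha\varphi+\beta\varphi'=1$ and its $\sigma$-twist are required in order to cancel the residual polynomial coefficient multiplying $n_1$. Everything else reduces to a routine manipulation using the $\sigma$-commutation rules of $A$ and the fact that $\nu$ fixes the subalgebra $\kk[z]\subset A$.
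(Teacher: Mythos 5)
Your proposal is correct and follows essentially the same route as the paper: for (1) you decompose $g(\partial^1(n_1,n_3,n_4))$ into twisted commutators using the B\'ezout identity and the telescoping relation equivalent to \eqref{eq:inj.der}, for (2) you invoke exactly the decomposition $(m_1,m_2,m_3,m_4)=\partial^1(n_1,n_3,n_4)+f(n_2)$ established just before the lemma, and for (3) you perform the direct check the paper calls ``easy.'' Your treatment is merely more explicit (grouping the terms by $n_1,n_3,n_4$ and spelling out the two uses of $\alpha\varphi+\beta\varphi'=1$) where the paper writes the same sum directly in twisted-commutator form.
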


\begin{proof}
  (2) follows from the definitions of $f$ and $g$, (3) is easy to check. For (1), the first, third, fourth components of $\partial^1(m_1,m_2,m_3)$ are $-xm_1+\lambda m_1x+\sigma(z)m_2-m_2z$, $-\Delta(\varphi)\cdot m_1+ym_2+m_3x$, $-\lambda{}^{\sigma}\!\Delta^{\sigma}(\varphi)\cdot m_1+m_2y+xm_3$, respectively. Thus
  \begin{align*}
    &\varphantom{=}g(\partial^1(m_1,m_2,m_3))\\
    &=\lambda^{-1}(-xm_1+\lambda m_1x+\sigma(z)m_2-m_2z)\alpha y+(-\Delta(\varphi)\cdot m_1+ym_2\\
    &\varphantom{=}{}+m_3x)\beta-\lambda^{-1}(-\lambda{}^{\sigma}\!\Delta^{\sigma}(\varphi)\cdot m_1+m_2y+xm_3)\sigma(\beta)\\
    &=-\lambda^{-1}xm_1\alpha(z)y+m_1x\alpha y+zm_2\alpha y-m_2\alpha yz-(\Delta(\varphi)\cdot m_1)\beta\\
    &\varphantom{=}{}+ym_2\beta+m_3x\beta+({}^{\sigma}\!\Delta^{\sigma}(\varphi)\cdot m_1)\sigma(\beta)-\lambda^{-1}m_2\beta y-\lambda^{-1}xm_3\sigma(\beta)\\
    &=[x,-\lambda^{-1}m_1\alpha y]-m_1\alpha yx+m_1x\alpha y+[z,m_2\alpha y]-(\Delta(\varphi)\cdot m_1)\beta\\
    &\varphantom{=}{}+[y,m_2\beta]+({}^{\sigma}\!\Delta^{\sigma}(\varphi)\cdot m_1)\sigma(\beta)-[x,\lambda^{-1}m_3\sigma(\beta)]\\
    &=[x,-\lambda^{-1}m_1\alpha y]+[z,m_2\alpha y]-[z,(\Delta^D(\varphi)\cdot m_1)\beta ]+[y,m_2\beta]\\
    &\varphantom{=}{}+[z,\lambda({}^{\sigma}\!\Delta^{\sigma D}(\varphi)\cdot m_1)\sigma(\beta)]-[x,\lambda^{-1}m_3\sigma(\beta)]\\
    &\in [A,M^{\nu}].
  \end{align*}
\end{proof}

\begin{prop}\label{prop:vbd-duality}
  Let $H(f)\colon H_0(A,M^{\nu}) \to H^2(A,M)$ be induced by $f\colon M\to \Ker\partial^2$.
   \begin{enumerate}
     \item $H(f)$ is an isomorphism if $A$ is homologically smooth, whose inverse is induced by $g$.
     \item When $M=A$, $H(f)$ is injective if $A$ is quantum with $\lambda$ not a root of unity.
     \item When $M=A$, $H(f)$ is bijective if $A$ is classical.
   \end{enumerate}
\end{prop}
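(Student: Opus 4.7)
\medskip
\textbf{Plan.} Part (1) is essentially a bookkeeping consequence of the preceding lemma. Item~(1) of that lemma says $g(\Ima\partial^1)\subset[A,M^\nu]$, so composing with the quotient $M\twoheadrightarrow H_0(A,M^\nu)$ gives a well-defined map $H(g)\colon H^2(A,M)\to H_0(A,M^\nu)$ (the target is well-defined because one checks that $g$ vanishes on the subspace $\Ima\partial^1$). Items~(2) and~(3) then say $fg-\id$ lands in $\Ima\partial^1$ and $gf-\id$ lands in $[A,M^\nu]$, so on cohomology $H(f)\,H(g)=\id$ and $H(g)\,H(f)=\id$; hence $H(f)$ is an isomorphism with inverse $H(g)$. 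The only hidden hypothesis is the Bezout identity $\alpha\varphi+\beta\varphi'=1$, which is exactly the homological smoothness assumption.

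For parts~(2) and~(3) we are forced to work without $g$, so the plan is to expand everything in the basis $\mathcal{B}=\{z^p x_q\mid p\ge0,\ q\in\mathbb{Z}\}$ and attack the four coordinate equations
\begin{align*}
\lambda mx&=-xn_1+\lambda n_1x+\sigma(z)n_2-n_2z,\\
-ym&=-yn_1+\lambda^{-1}n_1y+\lambda^{-1}zn_3-\lambda^{-1}n_3\sigma(z),\\
0&=-\Delta(\varphi)\cdot n_1+yn_2+n_3x,\\
-\lambda{}^{\sigma}\!\Delta^{\sigma}(\varphi)\cdot m&=-\lambda{}^{\sigma}\!\Delta^{\sigma}(\varphi)\cdot n_1+n_2y+xn_3,
\end{align*}
directly. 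For injectivity in the quantum case, I first iterate the first two equations exactly as in the proof of Proposition~\ref{prop:long-sequence} and Lemma~\ref{lem:no-multi-root} to get $\sigma(z)^jn_2-n_2z^j$ and $\sigma(z)^jn_3-n_3\sigma(z)^j$ in terms of $m,n_1$. Plugging these into the third and fourth equations produces a single equation for $m$ modulo a subspace of the form $[x,\,\cdot\,]^\nu+[y,\,\cdot\,]^\nu+[z,\,\cdot\,]^\nu$. Working monomial by monomial in $\mathcal{B}$ and comparing coefficients, the only nontrivial residues sit in monomials $z^p x^q$ (or $z^p y^q$) whose ``$\nu$-weight'' matches, and the hypothesis that $\lambda$ is not a root of unity makes the scalars $\lambda^p-\lambda^q$ nonzero for $p\neq q$, allowing one to absorb every such residue into a twisted commutator; thus $m\in[A,A^\nu]$.

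For the classical case $(\lambda=1,\ \eta\neq 0)$ the same strategy yields injectivity more easily, because the automorphism $\nu$ is trivial on $z$ (and in fact trivial), so the twisted commutator space is larger and the obstructions collapse under the non-vanishing of $\eta$. Surjectivity in the classical case is then the substantive additional claim. Given an arbitrary Per $2$-cocycle $(m_1,m_2,m_3,m_4)$, I will mimic the construction used in Lemma~\ref{lem:no-multi-root} but \emph{without} Bezout: I will construct $n_1,n_2,n_3$ and an element $m\in A$ such that $(m_1,m_2,m_3,m_4)-f(m)=\partial^1(n_1,n_2,n_3)$ by a degree-by-degree recursion on the $z$-component, using $\eta\neq 0$ as a ``division-by-$\varphi'$'' substitute (the key point is that in the classical case the derivation $z\mapsto\sigma(z)-z=\eta$ is invertible on polynomials with zero constant term, which plays the role that $\alpha,\beta$ played before).

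\textbf{Main obstacle.} The bookkeeping in comparing coefficients of the $\mathcal{B}$-expansions of all four equations is very delicate, especially the cross-terms coming from $\Delta(\varphi)\cdot n_1$ and ${}^\sigma\!\Delta^\sigma(\varphi)\cdot n_1$: one has to match the ``wrong-side'' $z$-powers produced by $\sigma$ against twisted commutators with $z$. In the quantum case the failure of surjectivity (when $\lambda$ is not assumed suitable) and the restriction to $\lambda$ not a root of unity both originate here --- roots of unity create genuine obstructions because the scalars $\lambda^p-\lambda^q$ can vanish for $p\neq q$, and one should expect the corresponding monomials to give nontrivial classes in $H^2(A,A)$ that are not hit by $f$.
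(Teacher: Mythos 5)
For part (1), your argument is exactly the paper's: the preceding lemma gives $g(\Ima\partial^1)\subset[A,M^\nu]$, $fg-\id\subset\Ima\partial^1$, and $gf-\id\subset[A,M^\nu]$, from which $H(f)$ and $H(g)$ are inverse to each other; and you correctly note that the Bezout identity $\alpha\varphi+\beta\varphi'=1$ (equivalent, via Theorem~\ref{thm:twisted-CY}, to homological smoothness) is the hidden hypothesis that makes $g$ well-defined. That part is fine.

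For parts (2) and (3), however, you take a genuinely different route from the paper, and the route is not carried through. The paper disposes of (2), (3) in two short steps: when $\deg\varphi\geq 1$, it points to the explicit descriptions of $H^2(A,A)$ in \cite{Solotar:Hochschild-homology-GWA-quantum} (quantum) and \cite{Farinati:Hochschid-homology-GWA} (classical), and then reads off injectivity (resp.\ bijectivity) of $H(f)$ by comparing with the bases of $H_0(A,A^\nu)$ that are computed in the paragraphs just before the proposition; when $\deg\varphi=0$, $\varphi$ has no multiple roots, $A$ is homologically smooth by Theorem~\ref{thm:twisted-CY}, and (2), (3) follow from (1). You instead propose a self-contained monomial-by-monomial analysis, which would in principle be more informative than the paper's appeal to external computations, but you only outline it: the ``very delicate'' coefficient matching you flag as the main obstacle is precisely the entire content of the argument, and nothing in your sketch verifies that the residues of the twisted commutator class after iterating the cocycle equations really can all be absorbed, nor that the degree-by-degree recursion for surjectivity in the classical case (where $A$ need not be homologically smooth, so $\alpha,\beta$ are unavailable) actually terminates without obstruction. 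You also omit the cheap reduction of the $l=0$ case to (1). So as it stands, (2) and (3) have a genuine gap: the plan is plausible but the substantive computation is not done.
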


\begin{proof}
  (1) follows from the previous lemma. When $l\geq 1$, (2), (3) are implicit conclusions of \cite{Solotar:Hochschild-homology-GWA-quantum} and \cite{Farinati:Hochschid-homology-GWA} respectively. When $l=0$, by Theorem \ref{thm:twisted-CY}, $A$ is homologically smooth. Thus (2), (3) hold by (1).
\end{proof}

Recall the Per $2$-cocycles $f(z)$ and $f(1)$ used in subsection \ref{subsec:deformation}. They may give rise to the trivial deformations if they represent zero in $H^2(A,A)$. By Proposition \ref{prop:vbd-duality}, it suffices to consider if $z$ and $1$ are zero in $H_0(A,A^{\nu})=A/[A,A^{\nu}]$.

For any $b\in A$, denote by $\llb b\rrb$ the homology class presented by $b$ in $A/[A,A^{\nu}]$.

Consider the quantum case: $A=\kk[z;\lambda,0,\varphi(z)]$. Let $e$ be the order of $\lambda$ if $\lambda$ is a root of unity, or zero otherwise. Enlarge $\kk$ if necessary, assume that $\kk$ contains all roots $z_1,\ldots,z_l$ of $\varphi(z)$. Denote by $V(z_1,\ldots,z_l)$ the following Vandermonde Matrix indexed by $\mathbb{Z}^+$
\[
\begin{pmatrix}
1 & z_1 & z_1^2 & z_1^3 & \cdots \\
1 & z_2 & z_2^2 & z_2^3 & \cdots \\
\vdots & \vdots & \vdots & \vdots & \ddots \\
1 & z_l & z_l^2 & z_l^3 & \cdots \\
\end{pmatrix}.
\]
Let $V_{\lambda}(z_1,\ldots,z_l)$ be the sub-matrix obtained by picking the $(ke-e+2)$nd column of $V(z_1,\ldots,z_l)$ as the $k$th column for all $k\in\mathbb{Z}^+$. Then we have
\[
V_{\lambda}(z_1,\ldots,z_l)=\mathrm{diag}(z_1,\ldots,z_l)V(z_1^{e},\ldots,z_l^{e}),
\]
whose rank is denote by $R$. By convention, let $R=0$ if $l=0$.

For any $i$, $j>0$, $\llb x^j\rrb=\llb x^{j-1}\nu(x)\rrb=\lambda\llb x^j\rrb$, $\llb z^ix^j\rrb=\llb z^{i-1}x^jz\rrb=\lambda^j\llb z^ix^j\rrb$, and $\llb z^ix^j\rrb=\lambda^{-1}\llb xz^ix^{j-1}\rrb=\lambda^{i-1}\llb z^ix^j\rrb$. So $\llb z^ix^j\rrb\ne 0$ if $i\in e\mathbb{N}+1$, $j\in e\mathbb{N}$. An analogous discussion holds for $\llb z^iy^j\rrb$.

It follows from $\llb yxz^n\rrb=\lambda^{-1}\llb xz^ny\rrb=\lambda^{n-1}\llb xyz^n\rrb$ that
\begin{equation*}
\sum_{i=0}^{l}a_i(1-\lambda^{i+n-1})\llb z^{i+n}\rrb=0.
\end{equation*}
Let $S_n=(1-\lambda^{n-1})\llb z^{n}\rrb$. Then $\{S_n\}_{n\in\mathbb{N}}$ is a linear recursive sequence, and so we have
\begin{equation}\label{eq:general-term-formula}
(S_0,S_1,\ldots)=(T_0,\ldots,T_{l-1})V(z_1,\ldots,z_l)
\end{equation}
for some $T_i\in A/[A,A^{\nu}]$. Since $S_{ke+1}=0$ for all $k\ge 0$, it follows from \eqref{eq:general-term-formula} that
\[
(T_0,T_1,\ldots,T_{l-1})V_{\lambda}(z_1,\ldots,z_l)=0.
\]

Observe that $R=\#\{t\in\kk^\times \mid t=z_i^e \text{ for some $1\leq i\leq l$}\}$. By rearranging the order of the roots if necessary, we may assume that $z_1=\cdots=z_p=0$ if $\varphi(z)$ has $0$ as a root of multiplicity $p$, and that $z_{l-R+1}^e,\ldots,z_{l-1}^e,z_l^e$ are distinct nonzero numbers. In this way, $\{T_0,\ldots,T_{l-R-1}\}$ is a maximal linearly independent subset of $\{T_0,\ldots,T_{l-1}\}$.

Define $\xi\colon\mathbb{Z}^+\to\mathbb{N}$ by $\xi(1)=0$ and
\begin{alignat*}{2}
&\text{$\xi(k(e-1)+c)=ke+c$ for all $k\geq 0$ and $2\leq c\leq e$},&\quad &\text{if $e\geq 2$},\\
&\text{$\xi(n)=n$ for all $n\ge 2$}, &\quad &\text{if $e=0$}.
\end{alignat*}
Then by \eqref{eq:general-term-formula}, $\{S_{\xi(1)},\ldots,S_{\xi(l-R)}\}$ is a maximal linearly independent subset of $\{ S_0,S_1,\ldots\}$. Consequently,
\[
A/[A,A^{\nu}]=\bigoplus_{i=1}^{l-R}\kk\llb z^{\xi(i)}\rrb\oplus\bigoplus_{j\in e\mathbb{N}}\bigoplus_{k\in e\mathbb{Z}}\kk\llb z^{j+1}x_{k}\rrb,
\]
and so $\llb z\rrb\neq 0$, independently of $\lambda$ and $\varphi(z)$.

Next we consider the classical case: $A=\kk[z;1,\eta,\varphi(z)]$. Since $\nu=\id$, the quotient $A/[A,A^\nu]$ is $H_0(A,A)$. By \cite{Farinati:Hochschid-homology-GWA},
\begin{equation}\label{eq:basis-classical}
H_0(A,A)=
\begin{cases}
\vphantom{\bigg[]}0, & \text{ if $l=0$, $1$,}\\
\displaystyle\bigoplus_{i=0}^{l-2}\kk\llb z^i\rrb, & \text{ if $l\ge 2$.}
\end{cases}
\end{equation}
So $\llb 1\rrb\neq 0$ if and only if $\deg \varphi(z)\geq 2$.

\begin{remark}
Although \cite{Farinati:Hochschid-homology-GWA} excludes the case $l=0$, the fact $H_0(A,A)=0$ stays valid in this case.
\end{remark}

Let us close the section by a proposition which claims that the formal deformations $(A[[\tau]],*)$ constructed in subsection \ref{subsec:deformation} are not equivalent to the trivial one in most cases.

\begin{prop}
Let $A_1$, $A_2$ be the algebras $(A[[\tau]],*)$ given by Theorems \ref{thm:formal-deformation-quantum} and \ref{thm:formal-deformation-classical} respectively. Let $A_3=(A[[\tau]],*_{\mathrm{tr}})$. We have $A_1\ncong_\mathrm{f} A_3$ for any $\lambda$, $\varphi(z)$, and $A_2\ncong_\mathrm{f} A_3$ if and only if $\deg\varphi(z)\geq 2$.
\end{prop}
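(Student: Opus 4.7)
The strategy is to reduce the statement to whether the first-order term $F_1$ of each deformation is a Hochschild $2$-coboundary. If $A_i\cong_\mathrm{f} A_3$, then pulling back via an equivalence $G=\id+G_1\tau+\cdots$ immediately gives $F_1=\mathsf{b}G_1$, so $[F_1]=0$ in $H^2(A,A)$. Non-triviality of $[F_1]$ will be read off from the explicit Van den Bergh duality of Proposition \ref{prop:vbd-duality}: the map $H(f)\colon H_0(A,A^{\nu})\to H^2(A,A)$ sends $\llbracket z\rrbracket$ to $[F_1]$ in the quantum case (since $F_1$ corresponds to $f(z)$) and $\llbracket 1\rrbracket$ to $[F_1]$ in the classical case (since $F_1$ corresponds to $f(1)$). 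This reduces the problem to deciding when these homology classes are nonzero in $A/[A,A^{\nu}]$, which is exactly the content of the computations that precede the proposition.

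For $A_1$ (quantum case): the Vandermonde argument above established $\llbracket z\rrbracket\neq 0$ in $A/[A,A^{\nu}]$ for every $\lambda$ and every $\varphi(z)$. Proposition \ref{prop:vbd-duality}(1)--(2) gives that $H(f)$ is injective whenever $A$ is homologically smooth or $\lambda$ is not a root of unity, so in either case $[F_1]=H(f)(\llbracket z\rrbracket)\neq 0$ and $A_1\ncong_\mathrm{f} A_3$. The one subcase not directly covered by Proposition \ref{prop:vbd-duality} is $\lambda$ a root of unity together with $\varphi(z)$ admitting a multiple root; here I would argue through the locally finite deformation $\tilde A_\tau=\kk(\!(\tau)\!)[z;(1-\tau)\lambda,0,\varphi(z)]$, whose quantum parameter $(1-\tau)\lambda$ is a transcendental element of $\kk(\!(\tau)\!)$ and in particular lies outside $\{\lambda,\lambda^{-1}\}$. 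The isomorphism classification of quantum GWAs in \cite{Richard-Solotar:isom-quantum-GWA} then forbids $\tilde A_\tau\cong\kk(\!(\tau)\!)\otimes_\kk A$ as $\kk(\!(\tau)\!)$-algebras; since equivalent formal deformations produce isomorphic locally finite deformations, $A_1\ncong_\mathrm{f} A_3$ in this subcase too.

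For $A_2$ (classical case): here $\nu=\id$ and Proposition \ref{prop:vbd-duality}(3) upgrades $H(f)$ to a bijection $H_0(A,A)\cong H^2(A,A)$, so $[F_1]\neq 0$ iff $\llbracket 1\rrbracket\neq 0$. By \eqref{eq:basis-classical}, $\llbracket 1\rrbracket\neq 0$ exactly when $\deg\varphi(z)\geq 2$, giving the forward direction of the `if and only if'. For the converse, when $\deg\varphi(z)\leq 1$ one has $H_0(A,A)=0$ and hence $H^2(A,A)=0$ by the same bijection; since the obstructions to integrating an equivalence $G=\id+G_1\tau+G_2\tau^2+\cdots$ between two formal deformations of $A$ live order-by-order as classes in $H^2(A,A)$, the vanishing of this group forces every formal deformation of $A$ to be equivalent to the trivial one, so $A_2\cong_\mathrm{f} A_3$.

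The main obstacle is the single quantum subcase (root of unity $\lambda$ with non-smooth $\varphi$) that slips outside the hypotheses of Proposition \ref{prop:vbd-duality}. The locally finite deformation workaround resolves it, but depends on citing the GWA isomorphism classification; a purely cohomological alternative would require producing an explicit obstruction to $F_1$ being a Hochschild coboundary by direct computation, which is feasible but tedious.
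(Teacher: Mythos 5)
Your reduction of non-triviality to $[F_1]\ne 0$ in $H^2(A,A)$, the identification $[F_1]=H(f)(\llb z\rrb)$ (quantum) or $[F_1]=H(f)(\llb 1\rrb)$ (classical), and the appeal to Proposition~\ref{prop:vbd-duality} together with the $H_0$ computations match the paper's argument for the classical case and for the quantum case when either $\lambda$ is not a root of unity or $\varphi$ has no multiple roots; your converse in the classical case when $\deg\varphi<2$ (rigidity from $H^2(A,A)=0$, citing Gerstenhaber--Schack) is likewise the paper's.

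The workaround you sketch for the remaining quantum subcase ($\lambda$ a root of unity and $\varphi$ with a multiple root) has a genuine gap. It is not true in general that equivalent formal deformations produce isomorphic locally finite deformations: an equivalence $G=\id+G_1\tau+G_2\tau^2+\cdots$ from $(A[[\tau]],*)$ to $(A[[\tau]],*_{\mathrm{tr}})$ need not carry $\tilde{A}$ into $\tilde{A}$, since for a fixed $a\in A$ there is no reason for $\{G_n(a)\}_{n\ge 0}$ to span a finite-dimensional subspace of $A$. The subalgebra $\tilde{A}\subset A[[\tau]]$ is part of the chosen presentation, not an invariant of the completed $\kk[[\tau]]$-algebra, so passing to $\tilde{A}_\tau$ and comparing quantum parameters via \cite{Richard-Solotar:isom-quantum-GWA} is not licensed without further argument. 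The paper circumvents the issue by truncating to $\kk[\tau]/(\tau^2)$, where local finiteness is automatic: if $A_1\cong_\mathrm{f}A_3$ then $A_1/\tau^2 A_1\cong_\mathrm{f}A_3/\tau^2 A_3$, and since $z^e$ is central in $A$ and hence in $A_3/\tau^2 A_3$, there would exist $b\in A$ making $z^e+b\tau$ central in $A_1/\tau^2 A_1$. But in $A_1/\tau^2 A_1$ one has $[x,z^e]=-e\tau z^ex$, and no commutator $[x,b]$ computed in $A$ can supply the $z^ex$ term needed to cancel it (one checks $[x,z^ix^j]=(\lambda^i-1)z^ix^{j+1}$, which vanishes for $i=e$, and that $[x,z^iy^k]$ contributes no $x$-power terms). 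Replacing your locally-finite argument by this truncation argument closes the gap.
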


\begin{proof}
When $A$ is classical and $\deg\varphi(z)< 2$, we have $H^2(A,A)=0$. By \cite[Theorem, \S 5]{Gerstenhaber-Schack:deformation}, every formal deformation of $A$ is equivalent to the trivial one, i.e., $A_2\cong_\mathrm{f} A_3$.

By the computation of $H_0(A,A^\nu)$ and Proposition \ref{prop:vbd-duality}, we have $A_1\ncong_\mathrm{f} A_3$ if $\lambda$ is not a root of unity, and $A_2\ncong_\mathrm{f} A_3$ if $\deg\varphi(z)\geq 2$.

Suppose that $\lambda$ is a primitive $e$th root of unity. If $A_1$ were isomorphic to $A_3$, their first order deformations were also isomorphic, i.e., $A_1/\tau^2 A_1\cong_\mathrm{f} A_3/\tau^2 A_3$. The former is $\kk[\tau]\langle x,y,z\rangle/(f_0,f_1,f_2,f_3,f_4)$ where
\begin{align*}
f_0&=\tau^2,\\
f_1&=xz-(1-\tau)\lambda zx,\\
f_2&=yz-(1+\tau)\lambda^{-1}zy,\\
f_3&=xy-\bar{\varphi}(z)+\tau z\bar{\varphi}'(z),\\
f_4&=yx-\varphi(z).
\end{align*}
Since $z^e$ is a central element of $A$, it is also central in $A_3/\tau^2 A_3$. It follows that there exists $b\in A$ such that $z^e+b\tau$ is central in $A_1/\tau^2 A_1$. However, a direct computation shows that this is impossible. Hence $A_1$ cannot be isomorphic to $A_3$.
\end{proof}

\section*{Acknowledgments}
I am grateful to Prof.\ Wendy Lowen for her inspiring discussions and helpful conversations. Some ideas arose from my doctoral research. I would like to thank my supervisor Prof.\ Quanshui Wu for his advice and encouragement.


\end{document}